\newtheorem{theorem}{Theorem}[section]
\newtheorem{lemma}[theorem]{Lemma}
\theoremstyle{definition}
\newtheorem{definition}[theorem]{Definition}
\newtheorem{example}[theorem]{Example}
\newtheorem{conj}[theorem]{Conjecture}
\newtheorem{prop}[theorem]{Proposition}
\newtheorem{cor}[theorem]{Corollary}
\theoremstyle{remark}
\newtheorem{remark}[theorem]{Remark}
\numberwithin{equation}{section}
\begin{document}

\title{Singular hyperbolic metrics and negative subharmonic functions}

\author{Yu Feng}
\address{Wu Wen-Tsun Key Laboratory of Math, USTC, Chinese Academy of Sciences. \newline \indent School of Mathematical Sciences, University of Science and Technology of China,\newline \indent Hefei, 230026, China
}
\curraddr{Department of Mathematics, UC Berkeley, Berkeley,  CA 94720 USA}
\email{yuf@mail.ustc.edu.cn}
\thanks{
Y.F. is supported in part by China Scholarship Council.
Y.S. is supported in part by the National Natural Science Foundation of China
(Grant no. 11931009) and Anhui Initiative in Quantum Information Technologies (Grant no. AHY150200).
J.S. is partially supported by National Natural
Science Foundation of China (Grant No. 11721101).
B.X. is supported in part by the National Natural Science Foundation of China (Grant nos. 11571330 and 11971450).  Both Y.S. and B.X. are supported in part by
the Fundamental Research Funds for the Central Universities.
}
\thanks{$^\dagger$B.X. is the corresponding author.}

\author{Yiqian Shi}
\address{Wu Wen-Tsun Key Laboratory of Math, USTC, Chinese Academy of Sciences. \newline \indent School of Mathematical Sciences, University of Science and Technology of China,\newline \indent Hefei, 230026, China}
\email{yqshi@ustc.edu.cn}
\thanks{}

\author{Jijian Song}
\address{Center for Applied Mathematics. \newline \indent School of Mathematics, Tianjin University,\newline \indent Tianjin, 300350, China}
\email{smath@mail.ustc.edu.cn}
\thanks{}

\author{Bin Xu$^\dagger$}
\address{Wu Wen-Tsun Key Laboratory of Math, USTC, Chinese Academy of Sciences. \newline \indent School of Mathematical Sciences, University of Science and Technology of China,\newline \indent Hefei, 230026, China}
\email{bxu@ustc.edu.cn}
\subjclass[2010]{Primary 51M10; Secondary 34M35}

\date{}

\dedicatory{}

\keywords{singular hyperbolic metric, developing map, monodromy group, Zariski dense, subharmonic function}

\begin{abstract}
We propose a conjecture that the monodromy group of a singular hyperbolic metric on a non-hyperbolic Riemann surface is {\it Zariski dense} in ${\rm PSL}(2,\,{\Bbb R})$.
By using meromorphic differentials and affine connections,
we obtain an evidence of the conjecture that the monodromy group
of the singular hyperbolic metric can not be contained in four classes of one-dimensional Lie subgroups of ${\rm PSL}(2,\,{\Bbb R})$. Moreover, we confirm the conjecture if the Riemann surface is either one of the once punctured Riemann sphere, the twice punctured Riemann sphere, a once punctured torus and a compact Riemann surface.

\end{abstract}

\maketitle


\section{Introduction}\label{intro}

We investigate the monodromy groups of singular hyperbolic metrics on Riemann surfaces, not necessarily compact, in
terms of some analytic property of the underlying surfaces.
This project lies in the intersection of algebra, analysis and geometry on Riemann surfaces, and establishes a new connection between Differential Geometry and Potential Theory on Riemann surfaces.  We also use some techniques from some research works on cone spherical metrics during the course of the investigation.


\subsection{Background of singular hyperbolic metrics}
There have been many studies on singular  hyperbolic metrics. Nitsche \cite{N57}, Heins \cite{HE62}, Yamada \cite{Ya88}, Chou and Wan \cite{CW94,CW95} proved that an isolated singularity of a  hyperbolic metric must be either a cone singularity or a cusp one. Actually, if the curvature of a conformal Riemannian metric on a punctured neighbourhood is bounded below and above by two negative constants respectively, then the isolated singularity of the metric must be either a cone singularity or a cusp one (see \cite{HE62, KR08, MO93}). We gave the explicit expressions of  hyperbolic metrics near isolated singularities in \cite{FSX19, FSX19II} by using Complex Analysis.

Let $\Sigma$ be a Riemann surface, not necessarily compact, and $\textup{D}=\sum_{i=1}^\infty(\theta_{i}-1)p_{i}$ an ${\Bbb R}$-divisor on $\Sigma$ such that $0\leq \theta_i\not=1$, where $\{p_{i}\}_{i=1}^\infty$ is a discrete closed subset of $\Sigma$. We call $\mathrm{d} s^2$ a {\it singular hyperbolic metric
representing  {\rm D}} if and only if
\begin{itemize}

              \item   $\mathrm{d} s^2$ is a conformal metric of Gaussian curvature $-1$ on $\Sigma\backslash {\rm supp\, D}$, where we denote $\{p_i\}_{i=1}^\infty$ by ${\rm supp\, D}$.

              \item \ If $\theta_i>0$, then $\mathrm{d} s^2$\ {\it has a cone singularity at $p_{i}$ with cone angle\ $2\pi\theta_{i}>0$}. That is, in a neighborhood $U$ of  $p_{i}$, $\mathrm{d} s^2=e^{2u}\vert \mathrm{d} z \vert^2$, where $z$ is a complex coordinate of $U$  with\ $z(p_{i})=0$ and\ $u-(\theta_i-1)\ln \vert z \vert$\ extends continuously to $z=0$.

             \item \ If $\theta_i=0$, then $\mathrm{d} s^2$\ {\it has a cusp singularity at\ $p_{i}$}. That is, in a neighborhood $V$ of\ $p_{i}$,\ $\mathrm{d} s^2=e^{2u}\vert \mathrm{d} z \vert^2$, where\ $z$\ is a complex coordinate of $V$ with\ $z(p_{i})=0$ and\ $u+\ln\, \vert z \vert+\ln\,\left(-\ln\, \vert z \vert\right)$\ extends continuously to $z=0$.

 \end{itemize}

It was a classical problem about
the existence and uniqueness of a hyperbolic metric with finitely many prescribed singularities on a compact Riemann surface. By the Gauss-Bonnet formula, if\ $\mathrm{d} s^2$\ is a  hyperbolic metric representing the divisor\ $\textup{D}=\sum_{i=1}^n(\theta_{i}-1)p_{i}$ with $\theta_{i}\geq0$ on a compact Riemann surface $\Sigma$, then there holds\ $\chi(\Sigma)+\sum_{i=1}^n(\theta_{i}-1)<0$, where $\chi(\Sigma)$ is the Euler number of $\Sigma$. More than half a century ago, by using Potential Theory, M. Heins \cite{HE62} proved the following.

\begin{theorem}
\label{thm:rat}
There exists a unique  hyperbolic metric representing an $\mathbb{R}$-divisor\ $\textup{D}=\sum_{i=1}^n(\theta_{i}-1)p_{i}$ with $\theta_{i}\geq 0$ on a compact Riemann surface $X$ if and only if\ $\chi(X)+\sum_{i=1}^n(\theta_{i}-1)<0$.
\end{theorem}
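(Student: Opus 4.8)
The plan is to separate the two implications. The necessity of $\chi(X)+\sum_{i=1}^n(\theta_i-1)<0$ is the soft part: writing the metric locally as $\mathrm{d}s^2=e^{2u}|\mathrm{d}z|^2$, the condition that the Gaussian curvature equal $-1$ is equivalent to the semilinear equation $\Delta u=e^{2u}$ on $X\setminus\mathrm{supp}\,\mathrm{D}$, where $\Delta=4\,\partial_z\partial_{\bar z}$. Assuming such a metric exists, I would apply the Gauss--Bonnet formula for metrics with conical and cuspidal singularities,
\[
\int_{X\setminus\mathrm{supp}\,\mathrm{D}}K\,\mathrm{d}A=2\pi\Big(\chi(X)+\sum_{i=1}^n(\theta_i-1)\Big),
\]
and observe that its left-hand side equals $-\mathrm{Area}(X,\mathrm{d}s^2)<0$, which forces the stated inequality. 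The content of the theorem is therefore the converse, namely existence and uniqueness of a solution of $\Delta u=e^{2u}$ with the prescribed cone and cusp asymptotics whenever the inequality holds.

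For existence I would set up a Dirichlet-type problem and solve it by Perron's method of subharmonic functions, matching Heins's potential-theoretic approach. Calling $u$ a \emph{subsolution} if $\Delta u\ge e^{2u}$ and a \emph{supersolution} if $\Delta u\le e^{2u}$, the monotonicity of $t\mapsto e^{2t}$ yields a comparison principle: a subsolution lying below a supersolution along the boundary of a region stays below it inside. The strategy is to fix explicit local model metrics near the singular points --- the hyperbolic cone $\tfrac{4\theta_i^2|z|^{2\theta_i-2}}{(1-|z|^{2\theta_i})^2}|\mathrm{d}z|^2$ at a cone point of angle $2\pi\theta_i$ and the standard cusp $\tfrac{|\mathrm{d}z|^2}{|z|^2(\log|z|)^2}$ at a cusp, whose explicit expressions are available from our earlier work --- which already solve $\Delta u=e^{2u}$ and carry exactly the required asymptotics. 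Patching these models with a smooth background function then produces a subsolution and a supersolution with the correct singular behaviour, and I would define the candidate solution as the Perron envelope, the supremum of all subsolutions lying below the supersolution, upgrading it to an honest solution by the standard harmonic-replacement argument adapted to the increasing nonlinearity $e^{2u}$.

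The hard part will be the construction of a \emph{global} supersolution, equivalently, ensuring that the Perron envelope stays finite rather than escaping to $+\infty$; this is exactly where the hypothesis $\chi(X)+\sum_{i=1}^n(\theta_i-1)<0$ must enter, as it is the integrated (Gauss--Bonnet-type) condition guaranteeing that the local models can be amalgamated into a global barrier. A second delicate point is to verify that the resulting $u$ attains the precise cuspidal profile, i.e.\ that $u+\log|z|+\log(-\log|z|)$ converges at each point $p_i$ with $\theta_i=0$; the iterated-logarithm correction makes the barrier estimates near cusps more subtle than at cone points, and I expect the bulk of the technical work to concentrate there.

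Uniqueness I would obtain from the same comparison principle. If $u_1,u_2$ both solve $\Delta u=e^{2u}$ with identical prescribed singularities, then the singular parts cancel in $w:=u_1-u_2$, so $w$ extends continuously across every $p_i$ and is smooth away from them, hence attains its maximum on the compact surface $X$. At an interior maximum one has $\Delta w\le 0$, whereas $\Delta w=e^{2u_1}-e^{2u_2}$; if $w>0$ there, the right-hand side is strictly positive, a contradiction. Thus $w\le 0$ everywhere, and by symmetry $w\ge 0$, giving $u_1\equiv u_2$ and hence uniqueness of the metric.
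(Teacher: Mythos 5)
A preliminary remark: the paper does not prove Theorem \ref{thm:rat} at all --- it quotes it as a classical result of Heins \cite{HE62}, later reproved by McOwen \cite{MO88} and Troyanov \cite{TR91} --- so your proposal can only be measured against those classical arguments. Its skeleton (necessity via singular Gauss--Bonnet, existence via a Perron/sub-supersolution scheme, uniqueness via the comparison principle for the increasing nonlinearity $e^{2u}$) is indeed the skeleton of Heins' potential-theoretic proof. The necessity direction is fine, granted the singular Gauss--Bonnet formula with cusp terms, and the uniqueness argument is fine up to one point you gloss over: the maximum of $w=u_1-u_2$ may be attained at a singular point $p_i$, where $w$ is merely continuous, so you cannot write $\Delta w\le 0$ there; one first extends $w$ as a subharmonic function across the isolated singularities on the open set $\{w>0\}$ (exactly the kind of statement the paper records as Lemma \ref{thm:TR95}) and then applies the strong maximum principle.

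The genuine gap is in the existence part: you have placed the hypothesis $\chi(X)+\sum_{i=1}^n(\theta_i-1)<0$ on the wrong side of the barrier construction. The upper barrier is the soft half and does not use the strength of the hypothesis. Subsolutions of this equation can never escape to $+\infty$: in the smooth compact reduction $\Delta_0 u=K_0+e^{2u}$, at an interior maximum point $x$ of any subsolution one has $0\ge\Delta_0 u(x)\ge K_0(x)+e^{2u(x)}$, a uniform upper bound with no topological input; in the singular case the same role is played by the Ahlfors--Schwarz lemma, which dominates every conformal metric of curvature $\le-1$ on $X\setminus\mathrm{supp}\,\mathrm{D}$ by the complete hyperbolic metric of the punctured surface. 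What the hypothesis is actually needed for --- and is in fact equivalent to --- is the existence of a \emph{subsolution} with the prescribed singular asymptotics, i.e.\ the non-emptiness of your Perron family: your own necessity computation, applied to any metric of curvature $\le-1$ representing $\mathrm{D}$, gives $2\pi\bigl(\chi(X)+\sum_i(\theta_i-1)\bigr)\le-\mathrm{Area}<0$, so no such subsolution exists once the inequality fails. Concretely, already in the smooth case the supersolution is a large constant (no hypothesis needed), while the subsolution is $\phi+c$ with $c\ll 0$, where $\Delta_0\phi=K_0-\overline{K_0}$ and one needs the mean value $\overline{K_0}=2\pi\chi(X)/\mathrm{Area}$ to be negative; after the Green-function reduction in the singular case the analogous obstruction is exactly $2\pi\bigl(\chi(X)+\sum_i(\theta_i-1)\bigr)<0$. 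Your plan treats the subsolution as routine patching of local models and concentrates all the difficulty on the supersolution; executed as written it would stall, because the local cone and cusp models can be amalgamated into a global curvature-$\le-1$ metric only when the Gauss--Bonnet inequality holds, and that amalgamation --- not the upper bound --- is the heart of the ``if'' direction.
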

\noindent Nearly three decades later, by using the PDE method,  both McOwen \cite{MO88} and Troyanov \cite{TR91}, who apparently did not know Heins' work, proved the same theorem for hyperbolic metrics with only cone singularities.

\subsection{Developing maps and subharmonic functions}

From the viewpoint of a combination of Complex Analysis and $G$-structure (\cite[Theorem 2.12]{GLX20} and \cite[Theorem 2.2]{LLX20}),  the concept of {\it developing map} is naturally associated to a singular hyperbolic metric. In this manuscript, we focus on the algebraic property of developing maps of singular hyperbolic metrics on Riemann surfaces, which turns out to be interwoven  with the analytic property of underlying surfaces. To give all the details of the story, we need to at first present the existence and the basic properties of developing maps for singular hyperbolic metrics.

\begin{theorem}{\rm{\cite[Theorem 2.2]{LLX20}}}
Let $\mathrm{d} s^2$ be a singular hyperbolic metric representing {\rm D} on a Riemann surface $\Sigma$. Then there exists a  multivalued locally univalent holomorphic map
$f:\Sigma\backslash {\rm supp\, D}\longrightarrow \mathbb{H}=\{z\in {\Bbb C}:\Im\,z>0\}$
called a {\rm developing map} of $\mathrm{d} s^2$ such that $\mathrm{d} s^2=f^{*}\mathrm{d} s_{0}^2$, where\ $\mathrm{d} s_{0}^2=\vert {\rm d}z \vert^{2}/(\Im z)^{2}$\ is the hyperbolic metric on the upper half-plane ${\Bbb H}$ and the monodromy representation of $f$ gives a homomorphism
${\frak M}_f:\pi_1\big(X\backslash {\rm supp\, D}\big)\to
\textup{PSL}(2,\mathbb{R})$,
whose image has well defined conjugacy class in $\textup{PSL}(2,\mathbb{R})$.
{\rm By abuse of notation, we just call the image of ${\frak M}_f$ the {\it monodromy group} of $\mathrm{d} s^2$.}
Any two developing maps of $\mathrm{d} s^2$\ are related by a fractional linear transformation in $\textup{PSL}(2,\mathbb{R})$.

\end{theorem}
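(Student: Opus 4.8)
The plan is to reduce the global statement to a local construction governed by Liouville's equation and then to organize the local ambiguity into monodromy. First I would work on a simply connected coordinate chart $U\subset \Sigma\setminus\mathrm{supp\,D}$ and write $\mathrm{d}s^2=e^{2u}\,|\mathrm{d}z|^2$. The curvature condition $K\equiv -1$ is exactly Liouville's equation $u_{z\bar z}=\tfrac14 e^{2u}$. The decisive computation is that the quantity $\phi:=u_{zz}-u_z^2$ is holomorphic on $U$: differentiating and using Liouville's equation gives $\phi_{\bar z}=u_{zz\bar z}-2u_zu_{z\bar z}=0$. In the same stroke one checks that $\psi:=e^{-u}$ satisfies the second order linear ODE $\psi_{zz}+\phi\,\psi=0$ in the variable $z$ (with $\bar z$ entering only through the constants of integration). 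This is the hinge on which everything turns.

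Second, I would build the local developing map from $\phi$. Choosing two linearly independent holomorphic solutions $y_1,y_2$ of $y''+\phi\,y=0$ with constant Wronskian $W=y_1y_2'-y_1'y_2$, I set $f=y_1/y_2$. Then $f$ is holomorphic and locally univalent because $f'=-W/y_2^2\neq 0$, and its Schwarzian derivative is $\{f,z\}=2\phi$. It remains to match the metric, i.e. to prove $|f'|/\Im f=e^{u}$, equivalently $\Im(y_1\bar y_2)=|W|\,e^{-u}$ after a suitable normalization. This is where the reality of the metric is used: since $\psi=e^{-u}$ is a real solution of the holomorphic ODE in $z$ and simultaneously of the conjugate ODE in $\bar z$, it must be a Hermitian combination $\sum c_{ij}y_i\overline{y_j}$ of the fundamental solutions; reality and positivity then force it, up to a real normalization of the pair $(y_1,y_2)$, to be exactly the imaginary-part expression above, and simultaneously force $f$ to land in $\mathbb{H}$. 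Thus $f^*\mathrm{d}s_0^2=\mathrm{d}s^2$ on $U$.

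Third, I would pin down the local ambiguity. Since $\{f,z\}=2\phi$ is intrinsic, any other local solution is $f\mapsto (af+b)/(cf+d)$ for some $\begin{pmatrix} a&b\\ c&d\end{pmatrix}\in \mathrm{PSL}(2,\mathbb{C})$; the requirement that the pullback of $\mathrm{d}s_0^2$ equal the fixed real metric $\mathrm{d}s^2$ restricts this element to the isometry group of $(\mathbb{H},\mathrm{d}s_0^2)$, namely $\mathrm{PSL}(2,\mathbb{R})$. Hence on overlaps the locally defined developing maps differ by elements of $\mathrm{PSL}(2,\mathbb{R})$. Finally, analytic continuation produces the global object. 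Because $\phi$ is a globally defined holomorphic datum on the open surface $\Sigma\setminus\mathrm{supp\,D}$, the germ of $f$ continues along every path, and the value of $f$ after continuation around a loop $\gamma$ differs from the original by a transformation $\rho(\gamma)\in\mathrm{PSL}(2,\mathbb{R})$, by the previous paragraph. Concatenation of loops corresponds to composition of transformations, giving the homomorphism $\mathfrak{M}_f:\pi_1\big(\Sigma\setminus\mathrm{supp\,D}\big)\to \mathrm{PSL}(2,\mathbb{R})$. Replacing $f$ by another developing map $\tilde f=A\circ f$, which differs by a single $A\in\mathrm{PSL}(2,\mathbb{R})$ globally since both pull $\mathrm{d}s_0^2$ back to $\mathrm{d}s^2$, conjugates $\rho$ by $A$, so the conjugacy class of the image is well defined. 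I expect the genuine obstacle to be the second step: converting the curvature sign and the reality of $\mathrm{d}s^2$ into the statement that the fundamental solutions can be normalized so that $f$ maps into $\mathbb{H}$ with $\mathrm{PSL}(2,\mathbb{R})$-valued monodromy, rather than merely a $\mathrm{PSL}(2,\mathbb{C})$-valued projective structure; everything else is either the Liouville computation or formal monodromy bookkeeping.
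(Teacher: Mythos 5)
The paper never proves this statement: it is imported verbatim from \cite[Theorem 2.2]{LLX20}, which is quoted as a known result (the cited works phrase the construction in the language of complex analysis and $G$-structures). So there is no internal proof to compare against, and your proposal should be judged as a self-contained reconstruction. As such, it follows the classical and correct route: Liouville's equation $u_{z\bar z}=\tfrac14 e^{2u}$, holomorphy of $\phi=u_{zz}-u_z^2$ (equivalently $\{f,z\}=2\phi$), the linear ODE $\psi''+\phi\psi=0$ for $\psi=e^{-u}$, local developing maps as ratios of solutions, overlap ambiguity in ${\rm PSL}(2,\mathbb{R})$, and the monodromy theorem. The global continuation step is fine, with the small caveat that $\phi$ is not a global function but a projective connection (it transforms with the Schwarzian of the transition maps); what your argument actually uses is the existence of local developing maps near every point together with the ${\rm PSL}(2,\mathbb{R})$-ambiguity on overlaps, and that suffices.

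One step needs tightening, and it is exactly the one you flagged as the crux. Reality of $\psi$ does give $\psi=\sum_{i,j}c_{ij}\,y_i\overline{y_j}$ with $(c_{ij})$ Hermitian, but \emph{positivity of $\psi$ alone does not force the normal form} $\Im\bigl(y_1\overline{y_2}\bigr)$: the positive-definite choice $c_{ij}=\delta_{ij}$ gives $\psi=|y_1|^2+|y_2|^2>0$, which is the conformal factor of a \emph{spherical} (curvature $+1$) metric, not a hyperbolic one. What pins down the signature is the nonlinear Liouville equation itself: for $\psi=e^{-u}$, the equation $u_{z\bar z}=\tfrac14 e^{2u}$ is equivalent to the identity $\psi_z\psi_{\bar z}-\psi\,\psi_{z\bar z}=\tfrac14$, and substituting the Hermitian combination yields $-\det(c_{ij})\,|W|^2=\tfrac14$, hence $\det(c_{ij})<0$, i.e.\ the Hermitian matrix has signature $(1,1)$. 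Only after this determinant computation can you normalize the fundamental system so that $\psi=\Im\bigl(y_1\overline{y_2}\bigr)/|W|$, with the positivity of $\psi$ then merely selecting the upper half-plane over the lower one for $f=y_1/y_2$. With that one-line fix inserted, your plan is a complete and correct proof of the quoted theorem.
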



We recall the classification of Riemann surfaces in terms of the existence
of a non-constant negative subharmonic function, which is a basic concept in Potential Theory.

\begin{definition}\rm{\cite[p.179]{FK92}}.
\label{defi:classification}
Let $\Sigma$ be a Riemann surface. We call $\Sigma$ \textit{elliptic} if and only if $\Sigma$ is compact. We call $\Sigma$ \textit{parabolic} if and only if $\Sigma$ is not compact and $\Sigma$ does not carry a negative non-constant subharmonic function. We call $\Sigma$ \textit{hyperbolic} if and only if $\Sigma$ carries a negative non-constant subharmonic function. We call $\Sigma$ {\it non-hyperbolic} in short if it is either elliptic or parabolic.
\end{definition}


\subsection{$G$-metrics}
We recall the following  well known fact about the classification of positive dimensional Lie subgroups
of ${\rm PSL}(2,\,{\Bbb R})$, by which we introduce the concept of {\it $G$-metric}.

\begin{prop}
\label{prop:classify}
{\it Up to conjugacy, we can classify all the positive dimensional proper Lie subgroups of ${\rm PSL}(2,\,{\Bbb R})$ as the following five classes{\rm :}
\begin{itemize}
\item the two-dimensional subgroup $
L=\left\{\begin{pmatrix} a & b \\ 0 & \frac{1}{a}\end{pmatrix}:a>0, b\in\mathbb{R}\right\}
$;

\item
the family $\{H_{1c}:c>0\}$ of one-dimensional Lie subgroups
$H_{1c}=\\\left\{\begin{pmatrix} c^{n} & t \\ 0 & c^{-n} \end{pmatrix}:n\in\mathbb{Z},\ t\in \mathbb{R}\right\}$;

\item the one-dimensional subgroup
$H_{2}=\left\{\begin{pmatrix} a & 0 \\ 0 & a^{-1}\end{pmatrix}:a>0 \right\}$;

\item the one-dimensional subgroup
$H_{2}'=\left\{\begin{pmatrix} a & 0 \\ 0 & a^{-1} \end{pmatrix}, \begin{pmatrix} 0 & b \\ -b^{-1} & 0 \end{pmatrix}:a>0, b>0 \right\}$;

\item the one-dimensional subgroup
$H_{3}=\left\{\begin{pmatrix} \cos t & \sin t \\ -\sin t & \cos t \end{pmatrix}:t\in \mathbb{R}\right\}/\{\pm I_{2}\}$.
\end{itemize}}
{\rm We denote by $L_0$ the normal subgroup
$
H_{11}=\left\{\begin{pmatrix} 1 & t \\ 0 & 1\end{pmatrix}:t\in\mathbb{R} \right\}
$
of $L$.}
\end{prop}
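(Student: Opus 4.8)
The plan is to reduce the classification of closed positive-dimensional subgroups $G\subset\mathrm{PSL}(2,\mathbb{R})$ to two pieces of linear-algebraic data: the Lie subalgebra $\mathfrak{g}\subset\mathfrak{sl}(2,\mathbb{R})$ of the identity component $G^{0}$, and the discrete extension recorded by the injection $G/G^{0}\hookrightarrow N(G^{0})/G^{0}$ into the component group of the normalizer. I would fix the standard basis $H=\begin{pmatrix}1&0\\0&-1\end{pmatrix}$, $E=\begin{pmatrix}0&1\\0&0\end{pmatrix}$, $F=\begin{pmatrix}0&0\\1&0\end{pmatrix}$ with $[H,E]=2E$, $[H,F]=-2F$, $[E,F]=H$. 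Since $\mathrm{PSL}(2,\mathbb{R})$ is connected and simple of dimension $3$, a proper positive-dimensional $G$ must have $\dim G^{0}\in\{1,2\}$, and conjugacy of subgroups is controlled on the Lie-algebra level by the adjoint action.

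First I would classify the subalgebras. For a nonzero $X\in\mathfrak{sl}(2,\mathbb{R})$ the sign of $\det X$ is invariant under conjugation and under rescaling of the line $\mathbb{R}X$, and it splits the nonzero vectors into three orbits: $\det X<0$ (diagonalizable over $\mathbb{R}$, conjugate to $\mathbb{R}H$), $\det X=0$ (nilpotent, conjugate to $\mathbb{R}E$), and $\det X>0$ (elliptic, conjugate to $\mathbb{R}\begin{pmatrix}0&1\\-1&0\end{pmatrix}$). Exponentiating gives the three connected $1$-dimensional subgroups $H_{2}$, $L_{0}=H_{11}$ and $H_{3}$ respectively. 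For a $2$-dimensional subalgebra $\mathfrak{g}$ I would first rule out the abelian case by noting that every nonzero element of $\mathfrak{sl}(2,\mathbb{R})$ has a $1$-dimensional centralizer, so $\mathfrak{g}$ is the non-abelian $2$-dimensional Lie algebra; writing $\mathbb{R}B=[\mathfrak{g},\mathfrak{g}]$ and choosing $A$ with $[A,B]=B$, the operator $\mathrm{ad}_{A}$ has eigenvalues $0,1$ on $\mathfrak{g}$ and hence $0,\pm1$ on $\mathfrak{sl}(2,\mathbb{R})$ by tracelessness, forcing $A$ to be split and, after conjugation, $\mathfrak{g}=\mathrm{span}(H,E)$. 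This is the Borel subalgebra, whose group is $L$.

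Next I would pin down the components through normalizers. The Borel subgroup $L$ and the maximal compact $H_{3}=\mathrm{Stab}(i)$ are both self-normalizing, so each contributes only itself. The unipotent $L_{0}$ is normalized exactly by the Borel $L$, with $N(L_{0})/L_{0}\cong\mathbb{R}_{>0}$ acting on $\mathbb{R}E$ by $X\mapsto a^{2}X$; its discrete subgroups are the cyclic groups $c^{\mathbb{Z}}$, and lifting these back produces precisely the family $H_{1c}$. The diagonal torus $H_{2}$ is normalized by $H_{2}$ together with the Weyl element $\begin{pmatrix}0&1\\-1&0\end{pmatrix}$, giving $N(H_{2})/H_{2}\cong\mathbb{Z}/2$ and exactly the two subgroups $H_{2}$ and $H_{2}'=N(H_{2})$. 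Assembling the cases yields precisely the five listed classes.

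The hard part will be the bookkeeping of the disconnected case rather than the connected classification: I must verify that the normalizer computations are exhaustive (using that each connected $1$-dimensional subgroup is characterized by its fixed-point behaviour on $\mathbb{H}\cup\partial\mathbb{H}$) and, crucially, that the family $\{H_{1c}\}$ consists of pairwise non-conjugate groups. This last point I would settle by observing that the hyperbolic generator of $H_{1c}/L_{0}$ has trace $c+c^{-1}$, a conjugacy invariant, so distinct $c>1$ give distinct conjugacy classes and the family is a genuine one-parameter modulus rather than a single class; the residual redundancy $c\leftrightarrow c^{-1}$ is absorbed by relabelling $n\mapsto -n$.
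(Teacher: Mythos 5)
The paper never proves this proposition: it is invoked as a ``well known fact,'' with no argument given, so there is nothing in-paper to compare against line by line; the only question is whether your proof stands on its own, and it does. Your route is the standard one and the key steps all check out: the three adjoint orbits of lines in $\mathfrak{sl}(2,\mathbb{R})$, distinguished by the sign of $\det$, exponentiate to $H_{2}$, $L_{0}$, $H_{3}$; the centralizer argument correctly rules out abelian two-dimensional subalgebras, and the $\mathrm{ad}_{A}$-eigenvalue computation (characteristic polynomial $x(x-1)(x+1)$, so $A$ is split) pins the non-abelian one to the Borel subalgebra, whose group is $L$; the normalizer computations $N(L)=L$, $N(H_{3})=H_{3}$, $N(L_{0})=L$ with $N(L_{0})/L_{0}\cong\mathbb{R}_{>0}$, and $N(H_{2})=H_{2}'$ with $N(H_{2})/H_{2}\cong\mathbb{Z}/2$ are exactly right and are justified by the fixed-point sets you cite ($\{i\}$ for $H_{3}$, $\{\infty\}$ for $L_{0}$ and $L$, $\{0,\infty\}$ for $H_{2}$); and the invariant $c+c^{-1}$ correctly exhibits $\{H_{1c}\}$ as a genuine one-parameter family of conjugacy classes with only the redundancy $c\leftrightarrow c^{-1}$. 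One point in your write-up deserves to be made explicit rather than left implicit: your restriction to \emph{closed} subgroups is not a convenience but a necessity, since under the immersed-subgroup convention the statement is literally false --- the preimage in $L$ of a dense countable subgroup of $N(L_{0})/L_{0}\cong\mathbb{R}_{>0}$ is a one-dimensional Lie subgroup belonging to no class on the list --- and closedness is precisely what forces the component group over $L_{0}$ to be trivial or infinite cyclic, yielding exactly $H_{11}=L_{0}$ and the $H_{1c}$. This reading is the one the paper needs (a countable monodromy group lies in some proper positive-dimensional subgroup if and only if it lies in a maximal closed one, so nothing is lost in the Zariski-density application). A second, smaller remark: when you lift the cyclic group $c^{\mathbb{Z}}$ back to $L$, a generator may lift to any element $\begin{pmatrix} c & t_{0}\\ 0 & c^{-1}\end{pmatrix}$, but multiplying by $L_{0}$ sweeps out the entire coset, so the resulting group is $H_{1c}$ independently of the choice; your phrase ``lifting these back produces precisely the family $H_{1c}$'' is correct but rests on this one-line verification.
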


\begin{definition}
Let $\mathrm{d} s^2$ be a singular hyperbolic metric representing the divisor $\textup{D}=\sum_{i=1}^\infty(\theta_{i}-1)p_{i},\  0\leq \theta_{i}\not=1$ on a Riemann surface $\Sigma$, and $G$ a positive dimensional proper Lie subgroup of ${\rm PSL}(2,\, {\Bbb R})$.
We call $\mathrm{d} s^2$ a {\it $G$-metric} if
the monodromy group of $\mathrm{d} s^2$ lies in $G$.

\end{definition}


\subsection{Main results and the conjecture}
We state the main results and the conjecture of this manuscript as follows.
\begin{theorem}
\label{thm:ZC}
Let ${\rm d}s^2$ be a singular hyperbolic metric on a non-hyperbolic Riemann surface $\Sigma$. Then the following two statements hold:
\begin{itemize}
\item ${\rm d}s^2$ is not a $G$-metric if $G$ is a Lie subgroup of ${\rm PSL}(2,\,{\Bbb R})$ which is conjugate to $H_{2}$, $H_{2}'$, $H_{3}$ or $L_{0}$.

\item If $\Sigma$ is a compact Riemann surface, $\mathbb{C}$, $\mathbb{C}\backslash \{0\}$ or a punctured torus, then the monodromy group of ${\rm d}s^2$ is Zariski dense in $\textup{PSL}(2,\mathbb{R})$, i.e. it cannot be contained in any positive dimensional proper Lie subgroup of $\textup{PSL}(2,\mathbb{R})$.

\end{itemize}
\end{theorem}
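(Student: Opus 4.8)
The plan is to translate the group-theoretic hypothesis on the monodromy into the existence of a distinguished monodromy-invariant object built from a developing map $f$, and then to play that object against the potential-theoretic poverty of a non-hyperbolic surface. Two facts drive everything. First, on a non-hyperbolic $\Sigma$ every subharmonic function that is bounded above is constant: in the elliptic (compact) case by the maximum principle, and in the parabolic case by subtracting the bound and applying Definition \ref{defi:classification}; moreover a subharmonic function bounded above near an isolated puncture extends subharmonically across it. Second, since $f$ is locally univalent, $f'$ never vanishes, so any conclusion forcing $f$ to have constant modulus, constant argument, or constant imaginary part is absurd. The recurring mechanism is therefore: produce a bounded-above (often bounded, or negative) monodromy-invariant subharmonic function, remove the singularities, conclude it is constant, and contradict local univalence.

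For the first assertion I would treat the four subgroups by exhibiting the correct invariant. For $G=H_2$ (the dilations $w\mapsto a^2 w$ fixing $0,\infty$) the quantity $\arg f\in(0,\pi)$ is invariant, harmonic and bounded, hence constant, so $f$ is confined to a ray, which is impossible. For $G=H_2'$ the extra swap $w\mapsto -1/w$ sends $\arg f\mapsto \pi-\arg f$, so $(\arg f-\pi/2)^2$ is a bounded invariant subharmonic function (locally the square of a harmonic function), again forced to be constant. For $G=H_3$ (rotations fixing $i$) I would pass through the Cayley transform to the disc model, obtaining a developing map $F$ into $\mathbb{D}$ whose monodromy is rotation about $0$; then $\log|F|$ is single-valued, negative and subharmonic (harmonic off the zeros of $F$, with $-\infty$ there), hence constant, forcing $|F|$ to be constant. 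For $G=L_0$ (translations $w\mapsto w+t$ fixing $\infty$) the function $\Im f$ is single-valued, positive and harmonic: on a parabolic surface a positive harmonic function is constant, while on a compact surface the single-valued meromorphic one-form $\omega=\mathrm{d}f$ has a zero of order $\theta_i-1$ at each (necessarily integer) cone point and a simple pole at each cusp, so $\sum_i(\theta_i-1)=\deg K_\Sigma=-\chi(\Sigma)$, contradicting the Gauss--Bonnet inequality $\chi(\Sigma)+\sum_i(\theta_i-1)<0$. In every case the singular points cause no trouble because the invariant is bounded above near them.

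For the second assertion it remains, after invoking the first part, to exclude the two subgroups that still fix the boundary point $\infty$, namely $L$ (the affine maps $w\mapsto\alpha w+\beta$, $\alpha>0$) and the family $H_{1c}\subset L$. The key structural remark is that $L$ contains no elliptic element, so the elliptic local monodromy at a non-integer cone point cannot occur; thus an $L$-metric carries only integer cone angles, whose local monodromy is trivial, together with cusps, whose parabolic monodromy lies in $L_0$. On $\Sigma=\mathbb{C}=\mathbb{P}^1\setminus\{\mathrm{pt}\}$ the group $\pi_1(\Sigma\setminus\mathrm{supp}\,\mathrm{D})$ is generated by loops around the singular points alone, so the whole monodromy already lies in $L_0$ and the first part applies. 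On a compact $\Sigma$ I would instead use the meromorphic affine connection $f''/f'\,\mathrm{d}z$, which is single-valued since $f'\mapsto\alpha f'$ under $L$; its residue is $\theta_i-1$ at each singular point, and the residue theorem for affine connections again yields $\sum_i(\theta_i-1)=-\chi(\Sigma)$, once more contradicting Gauss--Bonnet.

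The genuine obstacle is $L$ (and $H_{1c}$) on the non-compact surfaces $\mathbb{C}\setminus\{0\}$ and the punctured torus, where the puncture or handle generators can carry hyperbolic monodromy with a nontrivial dilation character $\alpha\colon\pi_1\to\mathbb{R}_{>0}$, so the reduction to $L_0$ is no longer automatic. Here I would study the multivalued positive harmonic function $\Im f$, which scales by $\alpha(\gamma)$ along each loop $\gamma$: passing to an intermediate or universal cover, which is $\mathbb{C}$ in both cases, one obtains a genuine positive harmonic function that is $\alpha$-equivariant under the deck group, and the plan is to show that parabolicity forbids such a function unless $\alpha\equiv1$, reducing again to $L_0$. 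Making this last step rigorous, namely controlling an equivariant positive harmonic function in the presence of the punctures coming from $\mathrm{supp}\,\mathrm{D}$, is where the difficulty concentrates, and is presumably the reason the statement is established only for these special underlying surfaces rather than for every non-hyperbolic $\Sigma$.
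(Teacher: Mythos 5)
Your first bullet is correct, and in places genuinely simpler than the paper. For $H_2$ the paper passes to the maximal abelian cover and invokes the Lyons--Sullivan theorem \cite{LS84} on bounded harmonic functions; for $H_2'$ it builds the branched double cover attached to the quadratic differential $\left(\frac{{\rm d}f}{f}\right)^{\otimes 2}$, deduces the cover is hyperbolic from the $H_2$ case, and then averages a truncated Green function under the deck involution. Your direct monodromy invariants $\arg f\in(0,\pi)$ and $(\arg f-\pi/2)^2$ (single-valued because dilations preserve $\arg$ and the swap sends $\arg f$ to $\pi-\arg f$), together with removability of subharmonic functions bounded above across polar sets and the openness of the locally univalent $f$, give the same conclusions with no covers at all. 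Your $H_3$ and $L_0$ arguments coincide in substance with the paper's (which uses $|f|$ in the disc model and $-\Im f$, respectively), and in the second bullet your compact case is exactly the paper's affine-connection residue argument (Corollary \ref{Cor:2}), while your reduction to $L_0$ on $\mathbb{C}$ (loops around punctures generate $\pi_1$, and parabolic or trivial elements of $L$ lie in $L_0$) is a valid alternative to the paper's treatment.

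The genuine gap is exactly where you locate it: $L$-metrics on $\mathbb{C}\backslash\{0\}$ and the punctured torus. Two things go wrong with your plan. First, the universal cover of the punctured torus is the unit disc, not $\mathbb{C}$ (its Euler characteristic is negative), so ``the universal cover, which is $\mathbb{C}$ in both cases'' is false in the harder case. Second, the principle you hope for --- that parabolicity forbids an $\alpha$-equivariant positive harmonic function unless $\alpha\equiv 1$ --- cannot be established by soft arguments in that generality, because it fails on the equally parabolic surface $\mathbb{C}\backslash\{0,1\}$: by McKean--Sullivan \cite{MS84} its maximal abelian cover is hyperbolic and carries precisely such equivariant functions, which is why Conjecture \ref{conj:ZP} remains open there. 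The missing idea is to work with the maximal abelian cover rather than the universal cover: since the local monodromies at the singularities are translations or trivial, the dilation character $s\colon\pi_1\to\mathbb{R}_{>0}$ by which the (extended, negative, non-constant, subharmonic) function $-\Im f$ transforms descends to $\pi_1(\Sigma)$, and, being valued in an abelian group, is trivial on commutators; hence $-\Im f$ becomes single-valued on $\Sigma^{\rm Ab}$, proving $\Sigma^{\rm Ab}$ hyperbolic. One then needs the surface-specific potential-theoretic input of Theorem \ref{thm:UL} (Uluda\u{g}): no abelian cover of $\mathbb{C}$, $\mathbb{C}\backslash\{0\}$ or a punctured torus is hyperbolic --- concretely, the maximal abelian covers here are $\mathbb{C}$ and $\mathbb{C}$ minus a lattice, both parabolic since a lattice is polar. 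With that ingredient your equivariant-function plan closes along the paper's lines; without it, it stalls exactly where you admit. Note also that your closing diagnosis is off: the theorem \emph{is} proved for these surfaces, and the restriction to them reflects Uluda\u{g}'s classification of surfaces with no hyperbolic abelian cover, not a failure of the equivariant step for them.
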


This theorem stimulates us to propose the following.

\begin{conj}
\label{conj:ZP}
The monodromy group of a singular hyperbolic metric on a non-hyperbolic Riemann surface is  Zariski dense in $\textup{PSL}(2,\mathbb{R})$.
\end{conj}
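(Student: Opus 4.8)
By Proposition~\ref{prop:classify}, Zariski density means the monodromy group lies in no conjugate of $L$, $H_{1c}$, $H_2$, $H_2'$, $H_3$ or $L_0$. The first part of Theorem~\ref{thm:ZC} already excludes $H_2$, $H_2'$, $H_3$ and $L_0$ for \emph{every} non-hyperbolic $\Sigma$, while its second part settles the compact case. Since $H_{1c}\subset L$ and $L_0\subset L$, the whole conjecture reduces to a single assertion: \emph{if $\Sigma$ is parabolic, then $\mathrm{d}s^2$ is not an $L$-metric.} I would fix such a metric, choose a developing map $f$ whose monodromy lies in $L=\mathrm{Stab}(\infty)$, so that $f\mapsto a^2f+ab$ under the deck action, and aim to manufacture a non-constant negative subharmonic function on $\Sigma$, contradicting parabolicity.

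The argument rests on two single-valued objects attached to such an $f$. The first is the affine connection (pre-Schwarzian) $\rho:=\mathrm{d}\log f'=\tfrac{f''}{f'}\,\mathrm{d}z$, which is invariant under $f\mapsto a^2f+ab$ and hence descends to a single-valued meromorphic $1$-form on $\Sigma$, holomorphic on $\Sigma\setminus\mathrm{supp}\,\mathrm{D}$. Here the local normal forms of $\mathrm{d}s^2$ at the singularities enter: requiring each local holonomy to lie in $L$ forces every cone angle to be an integral multiple of $2\pi$ (the cusp holonomies being automatically parabolic, hence already in $L_0$), and $\rho$ then acquires a simple pole of residue $\theta_i-1$ at each $p_i$. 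The second object is $w:=\log\Im f$. Since $\Im f>0$ is harmonic, $w$ is superharmonic with $\Delta w=-e^{2u}<0$, where $\mathrm{d}s^2=e^{2u}|\mathrm{d}z|^2$; under the deck action $w$ changes by the additive real character $\chi\colon\pi_1\to\mathbb{R}$, $\gamma\mapsto 2\log a(\gamma)$, so $\omega:=\mathrm{d}w$ is a single-valued closed $1$-form with periods $\oint_\gamma\omega=\chi(\gamma)$. If $\chi\equiv0$ the monodromy lies in $L_0$, already excluded, so $\chi\not\equiv0$.

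Next I would kill the multivaluedness of $w$. The plan is to produce a real harmonic $1$-form $\sigma$ on $\Sigma\setminus\mathrm{supp}\,\mathrm{D}$ with $\oint_\gamma\sigma=\chi(\gamma)$ for all $\gamma$; then $\omega-\sigma=\mathrm{d}\phi$ for a single-valued function $\phi$, and, $\sigma$ being harmonic, $\Delta\phi=\Delta w=-e^{2u}$. Thus $\phi$ is single-valued superharmonic, and it is automatically non-constant, for otherwise $w$ would be harmonic, contradicting $\Delta w<0$. If, in addition, $\phi$ is bounded below near each $p_i$ and toward the ideal boundary of $\Sigma$, then removability of isolated singularities for superharmonic functions bounded below lets $\phi$ extend to a non-constant superharmonic function on all of $\Sigma$; then $-\phi$ is subharmonic and bounded above, and subtracting its supremum yields a non-constant negative subharmonic function on $\Sigma$, contradicting parabolicity. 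For finite-topology surfaces --- the compact case, $\mathbb{C}$, $\mathbb{C}\setminus\{0\}$ and the punctured torus --- the cohomology is finite-dimensional, $\sigma$ exists and is explicit, and the bounds on $\phi$ can be verified by hand; this is what the second part of Theorem~\ref{thm:ZC} records.

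I expect the decisive obstacle to be the execution of this last step on an \emph{arbitrary} parabolic surface. The production of a harmonic $1$-form $\sigma$ with prescribed periods is delicate on an open surface: $H^1(\Sigma\setminus\mathrm{supp}\,\mathrm{D})$ may be infinite-dimensional, $\omega$ need not be square-integrable, and the standard Hodge/Dirichlet construction need not deliver a $\sigma$ whose primitive $\phi$ is controllable at the ideal boundary. Simultaneously, $\mathrm{supp}\,\mathrm{D}$ may be infinite, so the residues of $\rho$ and the local lower bounds for $\phi$ must be controlled uniformly over infinitely many singularities. Reconciling the global flux identity $\mathrm{d}{*}\omega=-\,\mathrm{d}A$, integrated over a regular exhaustion, with the residue data of $\rho$ at the singularities and with the vanishing harmonic measure of the ideal boundary of a parabolic surface is, I believe, the crux --- and it is precisely this point that the finiteness hypotheses in Theorem~\ref{thm:ZC} are tailored to avoid.
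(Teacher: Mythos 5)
You were asked to prove a statement that the paper itself does not prove: Conjecture~\ref{conj:ZP} is left open, and the paper offers only the partial evidence of Theorem~\ref{thm:ZC}. Your reduction is correct and coincides exactly with the discussion in the paper's final section: since $H_{1c}\subset L$ and $L_0\subset L$, and since Theorems~\ref{thm:noH_{3}}, \ref{thm:H_{2}'no} (which subsumes \ref{thm:H_{2}no}) and \ref{thm:no L_{0}} exclude $H_3$, $H_2$, $H_2'$ and $L_0$ on every non-hyperbolic surface, while Corollary~\ref{Cor:2} and Theorem~\ref{thm:no L on C^{*}} handle the elliptic case and $\mathbb{C}$, $\mathbb{C}^*$, the punctured torus, the conjecture is equivalent to the single assertion that no parabolic surface carries an $L$-metric. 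Your preparatory analysis is also sound: the invariance of $f''/f'$ under $f\mapsto a^2f+ab$ recovers the affine connection of Lemma~\ref{lem:L} with its residue data, the computation $\Delta\log\Im f=-e^{2u}$ is correct, and $\chi\equiv 0$ does indeed return you to the already-excluded $L_0$ case.

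The gap is the one you candidly flag yourself, and it is not a technicality but the entire open content of the conjecture: you give no construction of a harmonic $1$-form $\sigma$ with periods $\chi(\gamma)$ on a general parabolic surface, and no control of the primitive $\phi$ near $\mathrm{supp}\,\mathrm{D}$ (possibly infinite) or at the ideal boundary, so the non-constant negative subharmonic function never materializes. Worth noting is that the paper supplies concrete evidence that this step is genuinely hard: by McKean--Sullivan, the maximal abelian cover of the parabolic surface $\mathbb{C}\setminus\{0,1\}$ is hyperbolic, so the abelian-cover/bounded-harmonic-function mechanism that powers Theorems~\ref{thm:H_{2}no} and \ref{thm:no L on C^{*}} fails there, and any period-correction scheme like yours must extract parabolicity of $\Sigma$ itself rather than of a cover. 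The thrice punctured sphere is thus the minimal test case your strategy must survive, and as written it does not engage with it --- your flux identity $\mathrm{d}{*}\omega=-\mathrm{d}A$ integrated over an exhaustion is a plausible starting point, but nothing in the proposal converts it into a contradiction. In sum: correct reduction, correct identification of the crux, but no proof --- consistent with the statement being a conjecture in the paper.
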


\begin{remark}
The compact Riemann surface case of Theorem \ref{thm:ZC} can be thought of as an analogue of \cite[Theorem 7]{Falt83}, where G. Faltings proved the Zariski dense property in ${\rm PSL}(2,\,{\Bbb C})$ for the monodromy groups of permissible connections belonging to certain uniformization data on a compact Riemann surface.
\end{remark}

We investigate $G$-metrics as $G$ varies among all the positive dimensional proper Lie subgroups of $\textup{PSL}(2,\mathbb{R})$ in the remaining sections of this manuscript. In Section 2, we prove that there exists no $H_{3}$-metric on a non-hyperbolic Riemann surface, and construct a family of $H_{3}$-metrics with countably many cone singularities on the unit disc. We show the non-existence of either $H_2$-metric or $H_2'$-metric on a non-hyperbolic Riemann surface in Sections 3 and 4, respectively. In Section 5, we prove that there exists no $L$-metric on a compact Riemann surface, $\mathbb{C}$, $\mathbb{C}^*$ or a punctured torus. We show that any $L$-metric on the unit disk is automatically an $L_0$-metric in Section 5.
We prove Theorem \ref{thm:ZC} as a consequence of the results proved in Sections 2-5 and make a discussion for Conjecture \ref{conj:ZP} in the last section.

\section{$H_{3}$-metrics}
In this section,
we use the Poincar{\' e} disk model $\left(\mathbb{D}=\{z\in \mathbb{C}:\vert z \vert<1\}, \frac{4\vert {\rm d}z \vert^{2}}{(1-\vert z \vert^{2})^{2}}\right)$ other than the upper half plane model
$\left({\Bbb H},\, \vert {\rm d}z \vert^{2}/(\Im\, z)^{2} \right)$ to investigate an
$H_3$-metric ${\rm d}s^2$ representing an ${\Bbb R}$-divisor D on a Riemann surface $\Sigma$.  Hence, there exists a developing map $f:\Sigma\backslash {\rm supp\, D}\longrightarrow \mathbb{D}$ of the metric $\mathrm{d} s^2$ such that $\mathrm{d} s^2=f^{*}\left(4\vert {\rm d}z \vert^{2}/(1-\vert z \vert^{2})^{2}\right)$. Moreover, the monodromy of $f$ lies in
$$
{\rm U}(1)=\left\{z\mapsto e^{\sqrt{-1}t}:\,t\in [0,\,2\pi)\right\}.
$$
Hence we may also call the metric ${\rm d}s^2$ a ${\rm U}(1)$-{\it metric}.
Motivated by \cite{CWWX15}, we characterize a ${\rm U}(1)$-metric in terms of a meromorphic one-form on $\Sigma$ satisfying some geometric properties (Lemma \ref{lem:pq}). We can also construct a nonconstant bounded subharmonic function by using a developing map of a ${\rm U}(1)$-metric on $\Sigma$, which implies that $\Sigma$ must be hyperbolic
(Theorem \ref{thm:noH_{3}}). In addition, using some meromorphic one-forms, we can construct a family of ${\rm U}(1)$-metrics on ${\Bbb D}$ (Proposition \ref{prop:U(1)}).
\begin{lemma}
\label{lem:pq}
 Let\ $\mathrm{d} s^2$\ be a ${\rm U}(1)$-metric  representing an ${\Bbb R}$-divisor $\textup{D}=\sum_{i=1}^\infty(\theta_{i}-1)p_{i}$ on a Riemann surface $\Sigma$, and $f:\Sigma\backslash {\rm supp\, D}\longrightarrow \mathbb{D}$ a developing map of\ $\mathrm{d} s^2$ with monodromy  in ${\rm U}(1)$. Then the logarithmic differential
 $
 \omega:={\rm d} (\log f)=\frac{{\rm d} f}{f}
 $
 of\ $f$ extends to a meromorphic one-form on $\Sigma$ which satisfies the following properties{\rm :}
 \begin{enumerate}

\item If $p\in \Sigma\backslash {\rm supp\, D}$ is a pole of $\omega$, then $p$ is a simple pole of $\omega$ with residue 1.

\item $\mathrm{d} s^2$ has no cusp singularity. Moreover, if\ $\mathrm{d} s^2$ has a cone singularity at $p\in {\rm supp\, D}$ with cone angle $0<2\pi\alpha\notin 2\pi\,{\Bbb Z}$, then $p$ is a simple pole of $\omega$ with residue $\alpha$.

\item If $p$ is a cone singularity of $\mathrm{d} s^2$ with the angle $2\pi m\in 2\pi\,\mathbb{Z}_{>1}$, then $p$ is either a zero of $\omega$ with order $m-1$ or a simple pole of $\omega$ with residue $m$.

\item The real part $\Re\,\omega$ of $\omega$ is exact outside the set of poles of $\omega${\rm :}
$
 2\Re\omega={\rm d}(\log |f|^{2})
$.
\end{enumerate}
\end{lemma}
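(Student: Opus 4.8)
The plan is to first observe that $\omega$ is single-valued, and then read off its local behaviour point by point. Since the monodromy of $f$ lies in $\mathrm{U}(1)$, analytically continuing $f$ along any loop replaces $f$ by $e^{\sqrt{-1}t}f$ for some $t$, and therefore leaves $\mathrm{d}f/f$ unchanged. Hence $\omega=\mathrm{d}f/f$ descends to a single-valued holomorphic one-form on the open subset of $\Sigma\setminus\mathrm{supp}\,\mathrm{D}$ where $f\neq 0$. Property (4) is then immediate: the function $\log|f|^2=\log(f\bar f)$ is invariant under the same $\mathrm{U}(1)$-action, so it is genuinely single-valued off the zeros of $f$, and in any local coordinate $\mathrm{d}(\log|f|^2)=\mathrm{d}f/f+\mathrm{d}\bar f/\bar f=\omega+\bar\omega=2\Re\,\omega$. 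The remaining work is purely local: classify $\omega$ at the zeros of $f$ in $\Sigma\setminus\mathrm{supp}\,\mathrm{D}$ and at the points of $\mathrm{supp}\,\mathrm{D}$, and then use the boundedness $|f|<1$ to conclude that these are the only singularities and that the extension is meromorphic rather than worse.

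For property (1), at a point $p\in\Sigma\setminus\mathrm{supp}\,\mathrm{D}$ the map $f$ is holomorphic and locally univalent, so in a local coordinate $z$ centred at $p$ one has $f'\neq 0$; thus $\omega=(f'/f)\,\mathrm{d}z$ can fail to be holomorphic only when $f(p)=0$, in which case $f$ has a simple zero and $\omega$ has a simple pole of residue $1$. For the singular points I would separate cases according to the local monodromy. The monodromy around a cusp is a nontrivial parabolic element, which cannot lie in $\mathrm{U}(1)$ since the latter consists only of rotations fixing the origin together with the identity; this rules out cusps and proves the first assertion of (2). Around a cone point of angle $2\pi\alpha$ the local monodromy is the rotation $w\mapsto e^{2\pi\sqrt{-1}\alpha}w$ fixing $0$. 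When $\alpha\notin\mathbb{Z}$ this rotation is nontrivial, the developing map extends continuously to $p$ by sending it to the fixed point $0$, and—using the explicit local form of a hyperbolic metric near a cone singularity from \cite{FSX19, FSX19II}—one gets $f(z)=z^\alpha h(z)$ with $h$ holomorphic and $h(0)\neq 0$; then $\omega=(\alpha/z+h'/h)\,\mathrm{d}z$ has a simple pole of residue $\alpha$, which gives the rest of (2).

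When the angle equals $2\pi m$ with $m\in\mathbb{Z}_{>1}$ the local monodromy is trivial, so $f$ is single-valued near $p$; being bounded, it extends holomorphically across $p$ by Riemann's theorem, and by the open mapping theorem its value $w_0:=f(p)$ must lie in the interior $\mathbb{D}$, since a boundary value would force $f$ to take values outside $\overline{\mathbb{D}}$. Comparing the conformal factor $e^{2u}=4|f'|^2/(1-|f|^2)^2$ with the cone profile $|z|^{2(m-1)}$, and using $1-|w_0|^2>0$, shows that $f'$ vanishes to order exactly $m-1$ at $p$. If $w_0\neq 0$ then $f(p)\neq 0$ and $\omega=(f'/f)\,\mathrm{d}z$ inherits a zero of order $m-1$; if $w_0=0$ then $f$ vanishes to order $m$, so $\omega\sim(m/z)\,\mathrm{d}z$ has a simple pole of residue $m$. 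This is exactly the dichotomy in (3). Since the zeros and poles just described are isolated and $f$ is everywhere bounded, $\omega$ has no accumulation of singularities and no essential singularity, hence extends to a genuine meromorphic one-form on $\Sigma$.

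I expect the main obstacle to be the cone-point analysis, specifically justifying the normal forms: that for non-integer angle the developing map genuinely collapses $p$ onto the rotation centre with $f\sim z^\alpha$, and that for integer angle the order of vanishing of $f'$ matches the cone angle so as to produce the clean split between a zero of order $m-1$ and a simple pole of residue $m$. Both rely on the explicit local expressions for singular hyperbolic metrics in \cite{FSX19, FSX19II}, which is where I would anchor the rigorous verification; everything else (single-valuedness, property (1), the exclusion of cusps, and property (4)) follows formally from the $\mathrm{U}(1)$-invariance of $\omega$ and $\log|f|^2$ together with local univalence.
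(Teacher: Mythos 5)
Your proposal is correct, and its skeleton coincides with the paper's: $\mathrm{U}(1)$-invariance makes $\omega$ single-valued, then a point-by-point local analysis (smooth points, cusps excluded, non-integer cone angles, integer cone angles), then property (4) by the same one-line computation $2\Re\,\omega = \mathrm{d}\log|f|^2$. Where you genuinely diverge is in the tactics inside the cases, and your versions are somewhat more conceptual/elementary. For cusps, the paper simply cites \cite[\S3]{FSX19}, while you give the underlying reason: the local monodromy at a cusp is a nontrivial parabolic, and $\mathrm{U}(1)$ contains only elliptics and the identity. For non-integer angle $2\pi\alpha$, the paper substitutes the normal form $\mathfrak{f}=(az^{\alpha}+b)/(cz^{\alpha}+d)$ into the monodromy relation and solves the coefficient system, concluding $b=c=0$ (the alternative $a=d=0$ being killed by boundedness), hence $\mathfrak{f}=\mu z^{\alpha}$ exactly; your fixed-point argument is the invariant content of that computation, but be aware that to write $f=z^{\alpha}h(z)$ with $h$ \emph{single-valued} you need both fixed points of the local monodromy $MR_{\alpha}M^{-1}$ to be $0$ and $\infty$ (i.e. $b=0$ \emph{and} $c=0$); this follows from the same reasoning (a nontrivial element of $\mathrm{U}(1)$ fixes $0$ and $\infty$, and boundedness rules out $M(0)=\infty$), so the gap is only expository, not logical. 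For integer angle $2\pi m$, your route is the most different from the paper's: instead of the normal form with the case split on $bd$, you use triviality of the local monodromy, Riemann's removable singularity theorem, the open mapping theorem to place $f(p)$ in $\mathbb{D}$, and a comparison of the conformal factor $4|f'|^2/(1-|f|^2)^2$ with the cone profile $|z|^{2(m-1)}$ to get $\mathrm{ord}_p f'=m-1$; this is self-contained (it does not need \cite[Lemma 2.4]{FSX19} in this case) and yields the same dichotomy between a zero of order $m-1$ (when $f(p)\neq 0$) and a simple pole of residue $m$ (when $f(p)=0$). Both approaches are sound; the paper's buys uniformity (every case is read off the same normal form), while yours isolates exactly which hypotheses drive each case.
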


We call $\omega$ a {\it character 1-form} of the ${\rm U}(1)$-metric\ $\mathrm{d} s^2$.

\begin{proof} Since the developing map $f:\Sigma\backslash {\rm supp\,D}\to {\Bbb D}$ is
a multi-valued holomorphic function with monodromy in ${\rm U(1)}$, its logarithmic differential $\omega=\frac{{\rm d} f}{f}$ is a (single-valued) meromorphic one-form on $\Sigma\backslash {\rm supp\,D}$. Then we prove the four properties of $\omega$ in what follows, from which $\omega$ extends to a meromorphic one-form on $\Sigma$.

\begin{enumerate}

\item Suppose that $p\in \Sigma\backslash {\rm supp\,D}$ is a pole of $\omega$. We choose a function element $\mathfrak{f}$ near $p$.
    Since $\mathfrak{f}$ is a univalent holomorphic function near $p$, there exists a complex coordinate $z$ near $p$ with $z(p)=0$ such that $\mathfrak{f}=az+b,\ a\neq0$. Then $\omega=\frac{\mathfrak{f}'(z)}{\mathfrak{f}(z)}{\rm d}z=\frac{a}{az+b}{\rm d}z$. Since $p$ is a pole of $\omega$, we have $b=0$ and then $\mathfrak{f}=az,\ \omega=\frac{{\rm d}z}{z}$. Hence, $p$ is a simple pole of $\omega$ with residue 1.

 \item Since the developing map $f$ of $\mathrm{d} s^2$ has monodromy in ${\rm U}(1)$, $\mathrm{d} s^2$ has only cone singularities  (\rm{\cite[\S3]{FSX19}}). Suppose that $p$ is a cone singularity of $\mathrm{d} s^2$ with angle $0<2\pi\alpha\notin 2\pi\,{\Bbb Z}$. By \rm{\cite[Lemma 2.4]{FSX19}}, we can choose a function element $\mathfrak{f}$ near $p$ and a complex coordinate $z$ near $p$ such that $\mathfrak{f}=\frac{az^{\alpha}+b}{cz^{\alpha}+d}$ with $ad-bc=1$.  Since $f$ has monodromy in ${\rm U}(1)$,  there exists $\theta\in\mathbb{R}$ such that
$ e^{2\pi\sqrt{-1}\theta}\mathfrak{f}= e^{2\pi\sqrt{-1}\theta} \frac{az^{\alpha}+b}{cz^{\alpha}+d} = \frac{a e^{2\pi\sqrt{-1}\alpha}z^{\alpha}+b}{ce^{2\pi\sqrt{-1}\alpha} z^{\alpha}+d}.$
This is equivalent to the system:
$\left\{\  \begin{aligned} ac e^{2\pi\sqrt{-1}\alpha}(1-e^{2\pi\sqrt{-1}\theta}) &=0 \\
  (ade^{2\pi\sqrt{-1}\alpha}+bc)-e^{2\pi\sqrt{-1}\theta}(bce^{2\pi\sqrt{-1}\alpha}+ad) &=0\,. \\
  bd(1-e^{2\pi\sqrt{-1}\theta}) &=0 \end{aligned} \right.$
Solving it, we find that either\ $c=b=0$\ or\ $a=d=0$. If\ $a=d=0$, then\ $\mathfrak{f}=\frac{b}{cz^{\alpha}}$, which contradicts that $f$ takes values in ${\Bbb D}$. Thus\ $c=b=0$, that is,\ $\mathfrak{f}(z)$\ equals\ $\mu z^{\alpha}(\mu\neq0)$,\ $\mathfrak{f}(0)=0$. Hence $\omega=\frac{\alpha}{z}{\rm d}z$ and $p$ is a simple pole of $\omega$ with residue $\alpha$.

\item Suppose that $\mathrm{d} s^2$ has a cone singularity at $p$ with angle $2\pi m\in 2\pi\,{\Bbb Z}_{>1}$. By \rm{\cite[Lemma 2.4]{FSX19}}, we can choose a complex coordinate $z$ near $p$ such that $\mathfrak{f}=\frac{az^{m}+b}{cz^{m}+d}$ with $ad-bc=1$, and
$
\omega=\frac{\mathfrak{f}'(z)}{\mathfrak{f}(z)}{\rm d}z=\frac{mz^{m-1}}{(az^{m}+b)(cz^{m}+d)}{\rm d}z$.
If $bd\neq0$, then $p$ is a zero of $\omega$ with order $m-1$, and $\lim_{z\rightarrow p}f(z)=\frac{b}{d}\in\mathbb{D}\setminus\{0\}$.
If $bd=0$, then we have $b=0$ and $d\not=0$ since ${\frak f}$ takes values in ${\Bbb D}$. Hence,   $\mathfrak{f}(z)=\frac{az^{m}}{cz^{m}+d}$ and $p$ is a simple pole of $\omega$ with residue $m$.

\item By a simple computation, we have $2\Re\,\omega={\rm d}\,(\log\,f)+{\rm d}\,\overline {(\log\,f)}={\rm d}\log\, |f|^2$. By (1-3), $|f|^2$ is a single-valued smooth function outside the poles of $\omega$, where $\Re\,\omega$ is also exact.
\end{enumerate}

\end{proof}

\begin{remark} An $H_3$-metric with non-trivial monodromy has
a unique character 1-form.

\end{remark}

\begin{theorem}
\label{thm:noH_{3}}
There exists no $H_{3}-metric$ on a non-hyperbolic Riemann surface.

\end{theorem}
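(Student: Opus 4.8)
The plan is to argue by contradiction: assuming a ${\rm U}(1)$-metric $\mathrm{d}s^2$ representing $\mathrm{D}$ exists on a non-hyperbolic $\Sigma$, I will manufacture a \emph{negative non-constant subharmonic function on all of} $\Sigma$, which is impossible. Indeed, by Definition \ref{defi:classification} a parabolic surface carries no such function, while on a compact (elliptic) surface the maximum principle forbids any non-constant subharmonic function whatsoever; since ``non-hyperbolic'' means elliptic or parabolic, either way we reach a contradiction. The candidate function is
$u:=\log|f|$, where $f:\Sigma\setminus{\rm supp}\,\mathrm{D}\to\mathbb{D}$ is a developing map of $\mathrm{d}s^2$ with monodromy in ${\rm U}(1)$.

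The first block of steps establishes that $u$ is a well-defined negative non-constant subharmonic function on the \emph{punctured} surface $\Sigma\setminus{\rm supp}\,\mathrm{D}$. Although $f$ is multivalued, its monodromy acts by unimodular factors $z\mapsto e^{\sqrt{-1}t}z$, so $|f|$, hence $u=\log|f|$, is single-valued. Each local branch of $f$ is holomorphic, so $u$ is harmonic away from the zeros of $f$ and equals $-\infty$ at those zeros; thus $u$ is subharmonic on $\Sigma\setminus{\rm supp}\,\mathrm{D}$. Since $f$ takes values in the open unit disc we have $|f|<1$, whence $u<0$ everywhere; and since $\mathrm{d}s^2=f^{*}\big(4|\mathrm{d}z|^2/(1-|z|^2)^2\big)$ is a genuine metric, $f$ is non-constant and open, so $u$ is non-constant.

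The second block extends $u$ across the discrete set ${\rm supp}\,\mathrm{D}$. Here I would invoke Lemma \ref{lem:pq}: a ${\rm U}(1)$-metric has no cusp singularity, and near each cone point the function element takes one of the normal forms $\mathfrak{f}=\mu z^{\alpha}$, $\mathfrak{f}=\frac{az^{m}}{cz^{m}+d}$, or $\mathfrak{f}=\frac{az^{m}+b}{cz^{m}+d}$ with $bd\ne0$. In the first two cases $u$ acquires a logarithmic $-\infty$ pole, and in the last case $u$ extends continuously with finite value $\log|b/d|<0$; in every case $u$ stays bounded above by $0$ on a punctured neighbourhood of each $p_i$. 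By the removable-singularity theorem for subharmonic functions (a subharmonic function bounded above near an isolated point extends subharmonically across it after upper semicontinuous regularization), $u$ extends to a subharmonic function on all of $\Sigma$, still negative and non-constant, which is the desired contradiction.

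The main obstacle is precisely the passage across ${\rm supp}\,\mathrm{D}$: one must confirm that $u$ is bounded above there, ruling out the dangerous scenario in which $f$ accumulates on $\partial\mathbb{D}$ (excluded by the no-cusp clause of Lemma \ref{lem:pq} together with $|f|<1$), and one must check that the finite limiting values $b/d$ lie \emph{strictly} inside $\mathbb{D}$, so that $u$ remains negative rather than merely non-positive. Once this local analysis is in place, the remainder of the argument is soft potential theory.
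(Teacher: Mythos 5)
Your proof is correct and follows essentially the same route as the paper's: both use Lemma \ref{lem:pq}'s local normal forms at the singularities to promote the modulus of the developing map to a globally defined, non-constant subharmonic function bounded above, contradicting non-hyperbolicity. The only cosmetic difference is that the paper works with $|f|$ itself (continuous and bounded, so no removable-singularity argument is needed), whereas you work with $\log|f|$ and patch across its $-\infty$ poles via the removable-singularity theorem for subharmonic functions (the paper's Lemma \ref{thm:TR95}).
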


\begin{proof}
Let $\mathrm{d} s^2$ be an $H_3$-metric representing an ${\Bbb R}$-divisor D on a Riemann surface $\Sigma$ and
 $f:\Sigma\backslash {\rm supp\, D}\longrightarrow \mathbb{D}$ its developing map with monodromy in ${\rm U(1)}$. By the proof of Lemma \ref{lem:pq}, the function $|f|$ on $\Sigma\backslash {\rm supp\, D}$ extends continuously to $\Sigma$. If $p\in \Sigma$ is a cone singularity of $\mathrm{d} s^2$ with angle $0<2\pi\alpha\notin 2\pi\,{\Bbb Z}$, then by (2) of Lemma \ref{lem:pq},  we can choose a neighborhood $U$ of $p$ with complex coordinate $z$ such that $z(p)=0$ and $f(z)=z^{\alpha}$. Hence, $|f(z)|=|z|^{\alpha}$ is a subharmonic function on $U$. If $p\in \Sigma$ is either a smooth point or a cone singularity with angle $2\pi\, m\in 2\pi\,{\Bbb Z}_{>1}$ of $\mathrm{d} s^2$, by (1,3) of Lemma \ref{lem:pq},   we can choose a holomorphic function element $\mathfrak{f}$ in a neighborhood $V$ of $p$ such that $|f(z)|=|\mathfrak{f}(z)|$ is subharmonic on $V$. Therefore, $|f(z)|$ is a bounded non-constant subharmonic function on $\Sigma$, which implies that $\Sigma$ is a hyperbolic Riemann surface.

\end{proof}

Using certain meromorphic one-forms with simple poles and positive residues on the unit disc ${\Bbb D}$, we construct a family of ${\rm U}(1)$-metrics on ${\Bbb D}$ in the following.

\begin{prop}
\label{prop:U(1)}

{\it Let $\sum_{j=1}^{\infty}a_{j}$ be a convergent series of positive real numbers, $\{z_{1}, z_{2},\cdots\}$ a closed discrete subset of the unit disc ${\Bbb D}$, and $h:{\Bbb D}\to {\Bbb C}$ a holomorphic function such that $\Re \int^{z}\, h\, {\rm d}z$ has an upper bound.
Then
$\omega:=\bigg( \sum_{j=1}^{\infty}\frac{a_{j}}{z-z_{j}}+h(z) \bigg){\rm d}z$
is a meromorphic 1-form on the unit disc ${\Bbb D}$. And there exists a positive number $T$ and a 1-parameter family of ${\rm U}(1)$-metrics on ${\Bbb D}$,
$$\left\{{\rm d}\sigma_{\lambda}^{2}=:f_{\lambda}^{*}\left( \frac{4\vert {\rm d}z \vert^{2}}{(1-\vert z \vert^{2})^{2}}\right),\,\lambda\in(0,T)\right\}\quad{\rm where}\quad f_{\lambda}(z)=\lambda\cdot \exp\bigg(\int^{z}\omega\bigg),$$
such that $\omega$ is the common character 1-form of these metrics ${\rm d}\sigma_{\lambda}^{2}$.}
\end{prop}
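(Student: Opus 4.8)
The plan is to establish the four assertions in turn: that $\omega$ is a meromorphic $1$-form, that $f_\lambda$ has monodromy in ${\rm U}(1)$, that $f_\lambda$ maps $\mathbb{D}$ into $\mathbb{D}$ for all sufficiently small $\lambda$, and finally that $\omega$ is the common character $1$-form. First I would check that the series $g(z):=\sum_{j} a_j/(z-z_j)$ converges locally uniformly on $\mathbb{D}\setminus\{z_j\}$, so that $\omega=(g+h)\,{\rm d}z$ is genuinely meromorphic. On a compact $K\subset\mathbb{D}$ only finitely many $z_j$ are near $K$, and the remaining ones satisfy $|z-z_j|\geq\delta>0$ for $z\in K$; hence the tail $\sum_{j\geq N}a_j/(z-z_j)$ is dominated by $\delta^{-1}\sum_{j\geq N}a_j\to 0$, and convergence of $\sum a_j$ forces local uniform convergence. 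This exhibits $g$ as holomorphic off $\{z_j\}$ with a simple pole of residue $a_j$ at each $z_j$, so $\omega$ is meromorphic on $\mathbb{D}$ with exactly these simple poles and positive residues.

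Next I would analyse $f_\lambda=\lambda\exp\bigl(\int^z\omega\bigr)$. Since $h\,{\rm d}z$ is holomorphic on the simply connected $\mathbb{D}$, the only multivaluedness of $\int^z\omega$ comes from the residues: a loop around $z_j$ adds $2\pi\sqrt{-1}\,a_j$ to $\int^z\omega$ and therefore multiplies $f_\lambda$ by the unimodular constant $e^{2\pi\sqrt{-1}\,a_j}$. Thus the monodromy of $f_\lambda$ lies in ${\rm U}(1)$. In particular $\Re\int^z\omega$ is single-valued, and
\[
\log\,|f_\lambda(z)|=\log\lambda+\sum_{j}a_j\log\,|z-z_j|+\Re\int^z h\,{\rm d}z.
\]

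The crux, and the step I expect to be the main obstacle, is to bound $|f_\lambda|$ so that $f_\lambda(\mathbb{D})\subset\mathbb{D}$; here the positivity of the $a_j$ together with the convergence of $\sum a_j$ is essential. Since $z,z_j\in\mathbb{D}$ we have $|z-z_j|<2$, so $\log|z-z_j|<\log 2$, whence $\sum_j a_j\log|z-z_j|<(\log 2)\sum_j a_j=:M_1$ uniformly in $z$. Combined with the hypothesis $\Re\int^z h\,{\rm d}z\leq M_2$, this yields $\Re\int^z\omega\leq M:=M_1+M_2$, so $|f_\lambda(z)|\leq\lambda e^{M}$. Setting $T:=e^{-M}$, every $\lambda\in(0,T)$ gives $|f_\lambda|\leq\lambda e^{M}<1$ on $\mathbb{D}\setminus\{z_j\}$, with $|f_\lambda|\to 0$ at each $z_j$; hence $f_\lambda:\mathbb{D}\to\mathbb{D}$. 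I expect the delicate points to be the uniform upper bound and the correct extraction of $T$, rather than anything in the later identification.

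Finally I would identify $\mathrm{d}\sigma_\lambda^2=f_\lambda^*\bigl(4|{\rm d}z|^2/(1-|z|^2)^2\bigr)$ as a ${\rm U}(1)$-metric. Off the poles of $\omega$ one has $f_\lambda\neq 0$ and $f_\lambda'=f_\lambda\cdot(g+h)$, so $f_\lambda$ is locally univalent except at the zeros of $\omega$; these zeros being discrete, $\mathrm{d}\sigma_\lambda^2$ is a conformal metric of curvature $-1$ off a discrete singular set. Reading off the local models exactly as in the proof of Lemma \ref{lem:pq}, a simple pole of residue $a_j$ gives $f_\lambda\sim$ (nonzero constant)$\,(z-z_j)^{a_j}$ and hence a cone singularity of angle $2\pi a_j$, while a zero of $\omega$ of order $m-1$ gives a cone singularity of angle $2\pi m$; thus $\mathrm{d}\sigma_\lambda^2$ represents a bona fide $\mathbb{R}$-divisor and has monodromy in ${\rm U}(1)$, so it is a ${\rm U}(1)$-metric. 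Since ${\rm d}(\log f_\lambda)=\omega$ independently of $\lambda$, the form $\omega={\rm d}f_\lambda/f_\lambda$ is the common character $1$-form of the whole family $\{\mathrm{d}\sigma_\lambda^2:\lambda\in(0,T)\}$, which completes the proof.
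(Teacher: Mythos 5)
Your proposal is correct and follows essentially the same route as the paper's proof: local uniform convergence of the series, term-by-term integration, the uniform bound $\sum_j a_j\log|z-z_j|<(\log 2)\sum_j a_j$ together with the upper bound on $\Re\int^z h\,{\rm d}z$ (the paper writes this multiplicatively as $\prod_j|z-z_j|^{a_j}<2^{\sum_j a_j}$ and takes $T=1/M$ instead of $e^{-M}$), and then reading off the ${\rm U}(1)$ monodromy from the positive residues. Your explicit remark that local univalence fails precisely at the zeros of $\omega$, producing cone angles $2\pi m$, is a welcome clarification of a point the paper passes over, but it does not change the argument.
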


\begin{proof}
Since $\sum_{j=1}^{\infty}a_{j}<\infty$, the series $\sum_{j=1}^{\infty}\frac{a_{j}}{z-z_{j}}$ is uniformly convergent in any compact subset $K$ of ${\Bbb D} \backslash \{z_{1},\ z_{2},\ \ldots\}$. Thus $\sum_{j=1}^{\infty}\frac{a_{j}}{z-z_{j}}+h(z)$ is a meromorphic function on ${\Bbb D}$ with simple poles $z_{1},\ z_{2},\ \ldots$ whose residues are $a_{1},\ a_{2},\ \ldots$, respectively. By the uniform convergence of $\sum_{j=1}^{\infty}\frac{a_{j}}{z-z_{j}}$ on a path in ${\Bbb D} \backslash \{z_{1},\ z_{2},\ \ldots\}$,  we cando the term-by-term integration and obtain
\[\begin{aligned}
\exp\left(\int^{z}\omega\right)= \prod\limits_{j=1}^{\infty}(z-z_{j})^{a_{j}}\cdot\left(\exp \int^{z}\, h{\rm d}z\right).
\end{aligned}
\]
Since $\sum_{j=1}^{\infty}a_{j}<\infty$ and $\Re \int^{z}h$ has an upper bound, there exists $M>0$ such that
\[
\bigg|\exp\int^{z}\omega\bigg|= \prod\limits_{j=1}^{\infty}|z-z_{j}|^{a_{j}}\cdot e^{\Re\int^{z}h}<2^{\sum_{j=1}^{\infty}a_{j}}e^{\Re\int^{z}h}<M.
\]
Solving the equation $
\omega={\rm d}\, (\log\, f)$ on ${\Bbb D} \backslash \{z_{1},\ z_{2},\ \ldots\}$, up to a complex multiple with modulus one, we obtain a one-parameter family of multi-valued locally univalent holomorphic functions
$
f_{\lambda}(z)=\lambda\cdot\exp\left(\int^{z}\omega\right),\,\lambda\in(0,\, T):=(0,\,1/M),
$
which take values in ${\Bbb D}$ and have
 monodromy in ${\rm U}(1)$. Hence,
 ${\rm d}\sigma_{\lambda}^{2}=:f_{\lambda}^{*}\left( \frac{4\vert {\rm d}z \vert^{2}}{(1-\vert z \vert^{2})^{2}}\right)$, $\lambda\in (0,\,T)$, form
 a one-parameter family of ${\rm U}(1)$-metrics with character 1-form $\omega$.

\end{proof}

\begin{remark}
The condition $\sum_{j=1}^{\infty}\,a_{j}<\infty$ is optimal in Proposition \ref{prop:U(1)}. In fact, if $\sum_{j=1}^{\infty}\, a_{j}=\infty$, then, by taking $z_j=1-1/(j+1)$,  we find that the series $\sum_{j=1}^{\infty}\frac{a_{j}}{z-z_{j}}$ diverges at $z=0$.
\end{remark}

\section{$H_{2}$-metrics}
Let\ $\mathrm{d} s^2$\ be an $H_2$-metric on a Riemann surface\ $\Sigma$. In this section, we can obtain a holomorphic 1-form on $\Sigma$ from $\mathrm{d} s^2$  (Lemma \ref{lem:H2}), and prove that $\Sigma$ must be hyperbolic (Theorem \ref{thm:H_{2}no}).  To this end, we need to recall some preliminary results.

\begin{prop}\rm{(\cite[Proposition 1.36]{AH2002})}
\label{lem:AH02}

{\it Suppose $X$ is path-connected, locally path-connected, and semilocally simply-connected. Then for every subgroup $H\subset\pi_{1}(X,x_{0})$ there is a covering space $p:X_{H}\rightarrow X$ such that $p_{*}(\pi_{1}(X_{H},\widetilde{x_{0}}))=H$ for a suitably chosen basepoint $\widetilde{x_{0}}\in X_{H}$.}

\end{prop}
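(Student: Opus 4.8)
The plan is to construct $X_H$ explicitly as a quotient of the universal cover, following the classical path-lifting construction of the Galois correspondence for covering spaces. First I would build the universal cover $\widetilde{X}$ as the set of homotopy classes (rel endpoints) of paths $\gamma$ in $X$ starting at $x_0$, with projection $p([\gamma]) = \gamma(1)$. To topologize $\widetilde{X}$, I would invoke all three hypotheses: semilocal simple connectivity supplies a basis of open sets $U\subseteq X$ that are path-connected and for which the inclusion-induced map $\pi_1(U)\to\pi_1(X)$ is trivial; for each such $U$ and each class $[\gamma]$ with $\gamma(1)\in U$, set $U_{[\gamma]}$ to be the collection of classes $[\gamma\cdot\eta]$ with $\eta$ a path in $U$ emanating from $\gamma(1)$. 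Local path-connectedness guarantees that these sets $U_{[\gamma]}$ form a basis for a topology, and one verifies that $p$ carries each $U_{[\gamma]}$ homeomorphically onto $U$, so that $p$ is a covering map and $\widetilde{X}$ is simply connected.

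For a general subgroup $H$, I would impose the equivalence relation on $\widetilde{X}$ given by $[\gamma]\sim[\gamma']$ exactly when $\gamma(1)=\gamma'(1)$ and $[\gamma\cdot\overline{\gamma'}]\in H$, where $\overline{\gamma'}$ denotes the reverse path. Because $H$ is a subgroup, reflexivity, symmetry, and transitivity follow directly from the identity, inverse, and closure axioms; moreover the relation respects the basic open sets $U_{[\gamma]}$, so the quotient $X_H:=\widetilde{X}/\!\sim$ inherits a topology for which the induced map $p\colon X_H\to X$ remains a covering. The basepoint $\widetilde{x_0}$ is taken to be the class of the constant path at $x_0$.

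The remaining and most delicate step is to verify the identity $p_*\big(\pi_1(X_H,\widetilde{x_0})\big)=H$. Here I would argue via the path-lifting property of the covering $p$: a loop $\gamma$ based at $x_0$ lifts to a \emph{loop} in $X_H$ based at $\widetilde{x_0}$ if and only if the terminal point of its lift, namely the class of $[\gamma]$ in $X_H$, coincides with the basepoint, which by the definition of $\sim$ happens precisely when $[\gamma]\in H$. This yields both inclusions at once. The principal obstacle throughout is topological bookkeeping rather than any single deep idea: one must check that $U_{[\gamma]}$ is well defined independently of the chosen representative, that the candidate collection genuinely satisfies the basis axioms, and that passing to the quotient by $\sim$ preserves the evenly-covered neighborhood structure. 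Each of these verifications rests essentially on the semilocal simple connectivity hypothesis, which is exactly what rules out the pathological local fundamental groups that would otherwise obstruct the existence of the covering.
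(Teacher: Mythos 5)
Your proposal is correct and follows essentially the same argument as the paper's source: the paper states this result without proof, citing Hatcher's \emph{Algebraic Topology} (Proposition 1.36), and your construction --- building the universal cover from path classes, then quotienting by the relation $[\gamma]\sim[\gamma']$ iff $\gamma(1)=\gamma'(1)$ and $[\gamma\cdot\overline{\gamma'}]\in H$, and verifying $p_*\pi_1(X_H,\widetilde{x_0})=H$ via the loop-lifting criterion --- is precisely Hatcher's proof.
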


\begin{definition}
For a path-connected, locally path-connected, and semilocally simply-connected space $X$, call a path-connected covering space $\widetilde{X}\rightarrow X$ {\it abelian} if it is normal and has abelian deck transformation group. In particular,
the commutator subgroup $[\pi_{1}(X),\pi_{1}(X)]$ determines a path-connected covering space $X^{\rm Ab}\stackrel{p}{\longrightarrow}X$ by Proposition \ref{lem:AH02}. Since the commutator subgroup is normal, $X^{\rm Ab}$ is a normal covering space. And the deck transformation  group of the covering $X^{\rm Ab}\stackrel{p}{\longrightarrow}X$ is isomorphic to $\pi_{1}(X)/[\pi_{1}(X),\pi_{1}(X)]$, which is abelian. Hence $X^{\rm Ab}$ is an abelian covering space, which is called the {\it maximal abelian covering} of $X$.

\end{definition}

\begin{lemma}\rm{(\cite[Theorem 2]{LS84})}
\label{lem:Sullivan}
{\it There exists no non-constant bounded harmonic function on any abelian cover of a non-hyperbolic Riemann surface.}
\end{lemma}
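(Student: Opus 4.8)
The plan is to recast the statement probabilistically and then apply a rigidity theorem for bounded harmonic functions on abelian groups. Write $\pi:\widehat\Sigma\to\Sigma$ for the abelian cover, with abelian deck transformation group $\Gamma$, and fix a base point $\widehat o\in\widehat\Sigma$ over $o\in\Sigma$. The assertion to be proved is precisely that $\widehat\Sigma$ is \emph{Liouville}, i.e. carries no non-constant bounded harmonic function. The first step is to translate the hypothesis: a Riemann surface is non-hyperbolic (elliptic or parabolic) if and only if Brownian motion on it is recurrent. Compactness gives recurrence at once, while parabolicity is by definition the absence of a non-constant negative subharmonic function, which is classically equivalent to recurrence (absence of a Green's function). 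So I would begin from the recurrence of Brownian motion on the base $\Sigma$.

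The core step is a discretization. Running Brownian motion on $\widehat\Sigma$ from $\widehat o$ and sampling it along the Lyons--Sullivan stopping-time scheme produces a random walk on the orbit $\Gamma\cdot\widehat o\cong\Gamma$, hence a probability measure $\mu$ on $\Gamma$. Recurrence of the base is what guarantees that the sampled process returns to the orbit almost surely, so that $\mu$ is a genuine probability measure and, since $\widehat\Sigma$ is connected, its support generates $\Gamma$ (that is, $\mu$ is adapted). The content of the discretization theorem is that restriction to the orbit is a linear isomorphism between the space of bounded harmonic functions on $\widehat\Sigma$ and the space of bounded $\mu$-harmonic functions on $\Gamma$, namely those $u:\Gamma\to\mathbb{R}$ with $u(\gamma)=\sum_{\eta\in\Gamma}u(\gamma\eta)\,\mu(\eta)$.

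It then remains to show that $\Gamma$ supports no non-constant bounded $\mu$-harmonic function. This is exactly the Choquet--Deny theorem: on an abelian group, every bounded $\mu$-harmonic function for an adapted probability measure $\mu$ is constant. Transporting this back through the discretization isomorphism shows that every bounded harmonic function on $\widehat\Sigma$ is constant, which is the claim. Abelianness enters only at this point; for a general amenable deck group one would instead invoke an entropy or Poisson-boundary criterion, and recurrence of the \emph{induced} walk is neither available nor needed, since already for covers of a surface of genus $\ge 2$ the $\Gamma$-walk on $\mathbb{Z}^{2g}$ is transient.

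The main obstacle is the discretization step. Producing the measure $\mu$ and, above all, proving that restriction to the orbit is surjective onto bounded $\mu$-harmonic functions --- equivalently, that every bounded $\mu$-harmonic function on $\Gamma$ extends to a bounded harmonic function on $\widehat\Sigma$ --- is the technical heart of the argument and is where recurrence of the base is genuinely used; this is the part carried out in detail in Lyons--Sullivan. A softer, purely potential-theoretic alternative would average a bounded harmonic function $u$ over $\Gamma$ against an invariant mean to obtain a $\Gamma$-invariant bounded harmonic function descending to the parabolic base $\Sigma$, hence constant; but this only controls the mean of $u$ and not $u$ itself, so it does not by itself finish the proof, which is why the discretization route is preferable.
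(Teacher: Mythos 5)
Your proposal is correct and takes essentially the same route as the paper's source: the paper states this lemma as a quotation of \cite[Theorem 2]{LS84} without proof, and your argument --- recurrence of Brownian motion on the non-hyperbolic base, the Lyons--Sullivan discretization producing an adapted measure $\mu$ on the abelian deck group under which bounded harmonic functions on the cover correspond to bounded $\mu$-harmonic functions on the group, and finally the Choquet--Deny theorem --- is precisely the proof given in that reference. Your closing remarks (that abelianness enters only through Choquet--Deny, and that recurrence of the induced walk is neither available nor needed) are also accurate.
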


We introduce a holomorphic one-form from an $H_2$-metric  on a Riemann surface in the following.

\begin{lemma}
\label{lem:H2}
 Let\ $\mathrm{d} s^2$\ be an $H_2$-metric representing the divisor $\textup{D}=\sum_{i=1}^\infty(\theta_{i}-1)p_{i},\  0\leq \theta_{i}\not=1$ on a Riemann surface\ $\Sigma$ and $f:\Sigma\backslash {\rm supp\, D}\to \mathbb{H}$ a developing map of $\mathrm{d} s^2$ with monodromy in $H_{2}$. Then the following statements hold.
 \begin{enumerate}

\item The logarithmic differential
$
 \omega:=\frac{{\rm d}f}{f}
$
 of\ $f$ is a holomorphic 1-form on $\Sigma\backslash {\rm supp\, D}$, which extends to a holomorphic 1-form on $\Sigma$, which we call the {\rm character one-form} of $\mathrm{d} s^2$.

\item The singularities of\ $\mathrm{d} s^2$\  must be cone singularities with angles lying in $2\pi\,{\Bbb Z}_{>1}$. In particular, a cone singularity with angle
    $2\pi\,m\in 2\pi\,{\Bbb Z}_{>1}$ of
$\mathrm{d} s^2$ is a zero of $\omega$ with order $m-1$.

\item The developing map $f:\Sigma\backslash {\rm supp\, D}\to \mathbb{H}$ extends to a multi-valued holomorphic function on $\Sigma$ which also takes values in ${\Bbb H}$.

\end{enumerate}
\end{lemma}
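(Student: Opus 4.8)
The plan is to follow the template of Lemma \ref{lem:pq}, adapting it from the rotation group to the scaling group $H_2$. Under the action of $H_2$ on $\mathbb{H}$ the matrix $\mathrm{diag}(a,a^{-1})$ induces $w\mapsto a^2 w$, so the monodromy of $f$ multiplies $f$ by positive real constants along loops. Hence $\log f$ is multivalued only up to additive real constants and its differential
\[
\omega=\frac{\mathrm{d}f}{f}=\mathrm{d}(\log f)
\]
is a single-valued meromorphic $1$-form on $\Sigma\backslash\mathrm{supp}\,\mathrm{D}$. Because $f$ maps into the \emph{open} upper half-plane $\mathbb{H}$, which omits $0$, the map $f$ never vanishes, and local univalence gives $\mathrm{d}f\neq 0$; therefore $\omega$ is in fact a nowhere-vanishing holomorphic $1$-form on $\Sigma\backslash\mathrm{supp}\,\mathrm{D}$. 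This settles assertion (1) off the singular set and reduces everything to a local study at each $p_i$.

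For that local study I would first determine which singularities are possible. Every sufficiently small loop about $p_i$ maps under the monodromy representation $\mathfrak{M}_f$ into $H_2$, so the local monodromy at $p_i$ lies in $H_2$. A direct trace computation shows that every element $\mathrm{diag}(a,a^{-1})$ of $H_2$ satisfies $\lvert a+a^{-1}\rvert\geq 2$, with equality only for the identity; thus $H_2$ contains no nontrivial elliptic element and no parabolic element. Since a cusp forces parabolic local monodromy and a cone point of angle $2\pi\alpha$ with $\alpha\notin\mathbb{Z}$ forces a nontrivial elliptic rotation, both are excluded. Hence all singularities are cone points of angle $2\pi m$ with $m\in\mathbb{Z}_{>1}$, proving the first half of (2). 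The same conclusion can be reached, as in the proof of Lemma \ref{lem:pq}(2), by substituting the normal form $\mathfrak{f}=(az^\alpha+b)/(cz^\alpha+d)$ of \cite[Lemma 2.4]{FSX19} into the scaling relation $\lambda\mathfrak{f}=(ae^{2\pi i\alpha}z^\alpha+b)/(ce^{2\pi i\alpha}z^\alpha+d)$ and matching coefficients.

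At a cone point of integer angle $2\pi m$ the normal form of \cite[Lemma 2.4]{FSX19} reads $\mathfrak{f}=(az^m+b)/(cz^m+d)$ with $ad-bc=1$; since $z^m\mapsto e^{2\pi i m}z^m=z^m$, this $\mathfrak{f}$ is single-valued and holomorphic near $p_i$, and the computation is identical to that of Lemma \ref{lem:pq}(3):
\[
\omega=\frac{\mathfrak{f}'(z)}{\mathfrak{f}(z)}\,\mathrm{d}z=\frac{m z^{m-1}}{(az^m+b)(cz^m+d)}\,\mathrm{d}z .
\]
The key difference from the disk model is that the image $\mathfrak{f}(0)=b/d$ of the cone vertex must lie in the interior $\mathbb{H}$, whereas $0,\infty\in\partial\mathbb{H}$; this forces $b\neq 0$ and $d\neq 0$, so the denominator equals $bd\neq 0$ at $z=0$ and no pole can arise. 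Consequently $\omega$ extends holomorphically across $p_i$ with a zero of order exactly $m-1$, which completes (1) and (2). Finally, the same holomorphic prolongation of $\mathfrak{f}$ with $\mathfrak{f}(0)=b/d\in\mathbb{H}$ shows that $f$ extends to a multivalued holomorphic function on all of $\Sigma$ still taking values in $\mathbb{H}$, giving (3).

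The main obstacle I anticipate is justifying that the cone vertex is sent to an interior point $\mathfrak{f}(0)\in\mathbb{H}$ rather than to the boundary $\partial\mathbb{H}$; this is exactly what rules out the ``pole with residue $m$'' branch that does occur in the disk-model Lemma \ref{lem:pq}(3), where the center $0\in\mathbb{D}$ is an interior point. I would secure this from the cone-versus-cusp dichotomy built into \cite[Lemma 2.4]{FSX19} (cone vertices map to interior points, cusps to boundary points), checking directly that $\Im\,\mathfrak{f}(0)>0$ is equivalent to $p_i$ carrying a genuine cone singularity of the prescribed integer angle.
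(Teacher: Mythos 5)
Your proof is correct and follows essentially the same route as the paper's: single-valuedness of $\omega=\mathrm{d}f/f$ from the $H_2$-monodromy, the normal forms of \cite[Lemma 2.4]{FSX19} at the singularities, the observation that $f$ taking values in $\mathbb{H}$ forces $bd\neq 0$ (so the cone vertex maps to $b/d\in\mathbb{H}$ and $\omega$ acquires a zero of order $m-1$ rather than a pole), and analytic prolongation of the function elements for part (3). The only departure is that where the paper simply cites \cite[\S 3]{FSX19} to exclude cusps and non-integer cone angles, you substitute a self-contained trace argument ($H_2$ contains no parabolic and no nontrivial elliptic elements, while those singularities would force such local monodromy), which is a valid --- and arguably cleaner --- justification of that same step.
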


\begin{proof}
\begin{enumerate}

\item Take a point $p\in \Sigma\backslash {\rm supp\, D}$ and choose a function element $\mathfrak{f}$ of $f$ near $p$. Since $f$ has monodromy in $H_{2}$ and takes values in
    ${\Bbb H}$, $\omega:=\frac{{\rm d}\mathfrak{f}}{\mathfrak{f}}$ does not depend on the choice of  $\mathfrak{f}$ and $\omega=\frac{{\rm d}f}{f}$ is a holomorphic one-form on
     $\Sigma\backslash {\rm supp\, D}$. We postpone to (2) the proof that $\omega$ extends to a holomorphic one-form on $\Sigma$.

\item Since $f$ has monodromy in $H_{2}$, $\mathrm{d} s^2$ has only cone singularities with angles in $2\pi\,{\Bbb Z}_{>1}$ (\rm{\cite[\S3]{FSX19}}). Suppose that it has a cone singularity at $p$ with angle $2\pi m\in 2\pi\,\mathbb{Z}_{>1}$. Then, by \rm{\cite[Lemma 2.4]{FSX19}}, we can choose a complex coordinate $z$ near $p$ such that $\mathfrak{f}=\frac{az^{m}+b}{cz^{m}+d}$ with $ad-bc=1$, and
$\omega=\frac{mz^{m-1}}{(az^{m}+b)(cz^{m}+d)}{\rm d}z$.  Since $f$ takes values in ${\Bbb H}$, we have $bd\neq 0$ which implies that $p$ is a zero of $\omega$ with order $m-1$, and $\lim_{z\rightarrow p}f(z)=\frac{b}{d}\in\mathbb{H}$. Hence, $\omega$ extends to a holomorphic one-form on $\Sigma$.

\item By (2), a function element of $f$ extends to a cone singularity of ${\rm d}s^2$ analytically and achieves a value in ${\Bbb H}$. Hence, $f$ extends to a multi-valued holomorphic function which also takes values in ${\Bbb H}$.
\end{enumerate}

\end{proof}

\begin{theorem}
\label{thm:H_{2}no}
There exists no $H_2$-metric on a non-hyperbolic Riemann surface.
\end{theorem}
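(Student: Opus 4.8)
The plan is to extract from the developing map a single-valued, bounded, non-constant harmonic function on $\Sigma$, and then to invoke the potential-theoretic characterization of non-hyperbolic surfaces to reach a contradiction. First I would recall from Lemma \ref{lem:H2} that $f$ extends to a multi-valued holomorphic map $\Sigma\to\mathbb{H}$ whose values lie in $\mathbb{H}$, so that $f$ never vanishes and $\log f$ is a well-defined local holomorphic function near every point of $\Sigma$, including the cone singularities (where $f$ extends holomorphically to a nonzero value in $\mathbb{H}$ by Lemma \ref{lem:H2}(2,3)). The key observation is that the monodromy acts through $H_{2}$: the element $\mathrm{diag}(a,a^{-1})$ acts on $\mathbb{H}$ by $z\mapsto a^{2}z$ with $a^{2}>0$. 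Hence continuing $\log f$ around any loop changes it only by the \emph{real} additive constant $\log(a^{2})$. Consequently $u:=\Im\,\log f=\arg f$ is single-valued on $\Sigma$, and since $f$ takes values in $\mathbb{H}$ we have $u\in(0,\pi)$; thus $u$ is a bounded harmonic function on all of $\Sigma$, harmonic across the cone points by the holomorphic extension just mentioned.

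Next I would show that $u$ is non-constant. If $u\equiv c$, then every local branch of $f$ would map an open set into the ray $\{re^{\sqrt{-1}c}:r>0\}$; but such a branch is a non-constant, locally univalent holomorphic function, whose image is open by the open mapping theorem, contradicting the fact that a ray has empty interior. Therefore $u$ is a non-constant bounded harmonic function on $\Sigma$.

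Finally, I would conclude using the classification in Definition \ref{defi:classification}: since $\Sigma$ is a (trivial) abelian cover of itself, Lemma \ref{lem:Sullivan} forbids a non-constant bounded harmonic function when $\Sigma$ is non-hyperbolic. Equivalently and more directly, $u-\pi$ is a negative non-constant subharmonic function, which a parabolic surface cannot carry, while a compact surface admits no non-constant harmonic function at all by the maximum principle. In either case the existence of $u$ contradicts the assumption that $\Sigma$ is non-hyperbolic, which proves that no $H_{2}$-metric can exist on such a surface.

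The only real subtlety, and the step I would treat most carefully, is establishing the global single-valuedness of $\arg f$ together with its harmonic extension across the cone singularities; everything hinges on the fact that $H_{2}$ acts by positive real scalings, which is exactly what makes $\Im\,\log f$ monodromy-invariant. By contrast $\log|f|=\Re\,\log f$ is only single-valued up to real translations and is not a priori bounded, so it cannot play the role of $\arg f$ in this argument.
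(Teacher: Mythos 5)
Your proof is correct, but it follows a genuinely different route from the paper's. The paper also starts from Lemma \ref{lem:H2}, but then passes to the maximal abelian cover $\Sigma^{\rm Ab}\to\Sigma$: since $H_{2}$ is abelian, the monodromy representation kills commutators, so $f$ pulls back to a \emph{single-valued} holomorphic map $\Sigma^{\rm Ab}\to\mathbb{H}$; composing with the Cayley transform $w\mapsto (w-i)/(w+i)$ gives bounded non-constant harmonic functions on $\Sigma^{\rm Ab}$, and the Lyons--Sullivan theorem (Lemma \ref{lem:Sullivan}) then forces $\Sigma$ to be hyperbolic. You instead exploit the finer structure of $H_{2}$ --- not merely that it is abelian, but that it acts on $\mathbb{H}$ by positive dilations $z\mapsto a^{2}z$ --- so that $\arg f\in(0,\pi)$ is already monodromy-invariant, single-valued, bounded and harmonic on $\Sigma$ itself (your extension across cone points via Lemma \ref{lem:H2}(2,3) and your open-mapping argument for non-constancy are both sound). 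This lets you conclude directly from Definition \ref{defi:classification} and the maximum principle, with no need for Lyons--Sullivan; it is the exact analogue of the paper's own treatment of $H_{3}$-metrics in Theorem \ref{thm:noH_{3}}, where the invariant function is $|f|$ rather than $\arg f$. What the paper's heavier machinery buys is generality: the abelian-cover argument needs only commutativity of the monodromy group and so serves as the template reused for $L$-metrics in Theorem \ref{thm:no L on C^{*}}, where no monodromy-invariant bounded function exists on the base; for the statement at hand, your argument is the shorter and more self-contained one.
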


\begin{proof}

Let $\mathrm{d} s^2$ be an $H_2$-metric on a Riemann surface $\Sigma$. Then its developing map $f:\Sigma\backslash {\rm supp\, D}\longrightarrow \mathbb{H}$ extends to a multi-valued holomorphic function on $\Sigma$ which also takes values in ${\Bbb H}$.
Consider the maximal abelian covering $\Sigma^{\rm Ab}\stackrel{p}{\longrightarrow}\Sigma$. Then $p_{*}(\pi_{1}(\Sigma^{\rm Ab},\widetilde{x_{0}}))=[\pi_{1}(\Sigma),\pi_{1}(\Sigma)]$ for a suitably chosen base point $\widetilde{x_{0}}\in\Sigma^{\rm Ab}$.
Let $[\gamma]$ be the homotopy class of a loop $\gamma$ based at $\widetilde{x_{0}}$ and $\mathfrak{f}$ a function element of $f$ near $x_0=p(\tilde{x_0})$. Then the analytical continuation $\mathfrak{f}\circ p_{[\gamma]}$ of ${\frak f}\circ p$ along $\gamma$ equals $\mathfrak{f}_{[p\circ \gamma]}=\mathfrak{f}_{[aba^{-1}b^{-1}]}=\mathfrak{f}$ for some two loops $a$ and $b$ based at $x_{0}=p(\tilde{x_0})$. Hence $f\circ p$ is a single-valued holomorphic function on $\Sigma^{\rm Ab}$ taking values in ${\Bbb H}$. Since both real and imaginary parts of $\frac{f\circ p-i}{f\circ p+i}$ are bounded harmonic functions on $\Sigma^{\rm Ab}$, $\Sigma$ must be a hyperbolic Riemann surface by Lemma \ref{lem:Sullivan}.

\end{proof}

\begin{remark} Let $\mathrm{d} s^2$ be an $H_{2}$-metric on the unit disc ${\Bbb D}$.
 Then it must represent an effective divisor D and its developing map $f$ extends to a multi-valued holomorphic function on ${\Bbb D}$. Since ${\Bbb D}$ is simply connected, $f$ is a single-valued holomorphic function on ${\Bbb D}$ and the effective divisor $(f)$ associated to $f$ coincides with D. Hence, the study of $H_{2}$-metrics on ${\Bbb D}$ is equivalent to that of the critical sets of analytic self-maps of ${\Bbb D}$. If ${\rm supp\, D}$ is a finite subset of ${\Bbb D}$, Heins {\rm{\cite[\S19]{HE62}}} observed that the developing map $f$ for ${\rm supp\, D}$ are precisely the finite Blaschke products with critical set ${\rm supp\, D}$. Kraus  {\rm{\cite{K12}}} gave a description of the critical sets of analytic self-maps of ${\Bbb D}$, which are countable in general.

\end{remark}

\section{$H_{2}'$-metrics}

In this section, we construct a meromorphic quadratic differential from an $H_2'$ metric on a Riemann surface $\Sigma$ (Lemma \ref{lem:H2'}) and prove that $\Sigma$ must be hyperbolic (Theorem \ref{thm:H_{2}'no}).

Firstly we recall some basic knowledge of quadratic differentials. A quadratic differential $q$ on $\Sigma$ is a section of $K_{\Sigma}\otimes K_{\Sigma}$, a differential of type (2,0) locally of the form $\phi(z){\rm d}z^{2}$, where $K_{\Sigma}$
is the canonical line bundle of $\Sigma$. It is said to be holomorphic (or meromorphic)
when $\phi(z)$ is holomorphic (or meromorphic).

\begin{lemma}
\label{lem:H2'}
 Let\ $\mathrm{d} s^2$\ be an $H_2'$-metric on a Riemann surface\ $\Sigma$, representing the divisor $\textup{D}=\sum_{i=1}^\infty(\theta_{i}-1)p_{i},\  0\leq \theta_{i}\not=1$. Let $f:\Sigma\backslash {\rm supp\, D}\to \mathbb{H}$\ be a developing map of\ $\mathrm{d} s^2$, whose monodromy lies in $H_{2}'$. Then
$
 q:=\left(\frac{{\rm d}f}{f}\right)^{\otimes 2}
$
 is a holomorphic quadratic differential on $\Sigma\backslash {\rm supp\,D}$ which extends to a meromorphic quadratic differential on $\Sigma$, called the {\rm character quadratic differential} of $\mathrm{d} s^2$. Moreover, we have the following.

 \begin{enumerate}
\item The singularities of\ $\mathrm{d} s^2$\  must be cone singularities. Suppose that\ $\mathrm{d} s^2$ has a cone singularity at $p\in {\rm supp\,D}$ with the angle $2\pi\alpha>0$, where $\alpha$ is a non-integer. Then $\alpha=\frac{1}{2}+k$, where $k\in\mathbb{Z}_{\geq 0}$. If $k=0$, then $p$ is a simple pole of $q$; if $k>0$, then $p$ is a zero of $q$ with order $2k-1$.

\item If\ $\mathrm{d} s^2$ has a cone singularity at $p\in {\rm supp\, D}$ with angle $2\pi \in\mathbb{Z}_{>1}$, then $p$ is a zero of $q$ with order $2m-2$.

\item The ${\Bbb Z}$-divisor $(q)$ associated to $q$ is relevant to D by the equation $(q)=2 \cdot {\rm D}$.
\end{enumerate}
\end{lemma}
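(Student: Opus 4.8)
The plan is to adapt the template of Lemmas \ref{lem:pq} and \ref{lem:H2} to the disconnected group $H_2'$, the new feature being that $H_2'$ has two components. First I would prove $q$ is single-valued. Choosing a function element $\mathfrak{f}$ of $f$ and setting $\omega=\mathrm{d}\mathfrak{f}/\mathfrak{f}$, the hypothesis $\mathfrak{f}(\Sigma\setminus{\rm supp\,D})\subset\mathbb{H}$ gives $\mathfrak{f}\ne0$, so $\omega$ is holomorphic on $\Sigma\setminus{\rm supp\,D}$. The key is how $\omega$ transforms under the monodromy: a diagonal element of $H_2'$ acts on $\mathbb{H}$ by $z\mapsto a^2 z$, leaving $\omega=\mathrm{d}\log f$ unchanged, while an anti-diagonal element acts by $z\mapsto-b^2/z$, sending $\log f\mapsto\mathrm{const}-\log f$ and hence $\omega\mapsto-\omega$. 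In both cases $q=\omega^{\otimes2}$ is invariant, so it descends to a globally single-valued holomorphic quadratic differential on $\Sigma\setminus{\rm supp\,D}$.

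Next I would pin down the admissible local behaviour at a point of ${\rm supp\,D}$. Around a cone point of angle $2\pi\alpha$ the monodromy of $f$ is conjugate in $\mathrm{PSL}(2,\mathbb{C})$ to $z\mapsto e^{2\pi i\alpha}z$, a diagonalizable element of trace $2\cos(\pi\alpha)$, whereas a cusp would force a parabolic element. Comparing with the conjugacy classes actually occurring in $H_2'$ --- the identity, the hyperbolic diagonal elements (absolute trace $>2$), and the order-two elliptic anti-diagonal elements (trace $0$) --- I observe that $H_2'$ contains no parabolic element, ruling out cusps, and that a non-integer $\alpha$ forces $2\cos(\pi\alpha)=0$, i.e. $\alpha=\tfrac12+k$ with $k\in\mathbb{Z}_{\ge0}$, realized by the anti-diagonal element; an integer angle yields trivial local monodromy.

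Then I would compute $q$ in the normal form $\mathfrak{f}=\frac{az^\alpha+b}{cz^\alpha+d}$, $ad-bc=1$, supplied by \cite[Lemma 2.4]{FSX19}, which gives
\[
\omega=\frac{\alpha\,z^{\alpha-1}}{(az^\alpha+b)(cz^\alpha+d)}\,\mathrm{d}z,\qquad
q=\frac{\alpha^{2}\,z^{2\alpha-2}}{(az^\alpha+b)^{2}(cz^\alpha+d)^{2}}\,\mathrm{d}z^{2}.
\]
For a half-integer $\alpha$ the anti-diagonal monodromy condition translates, after conjugating $\mathrm{diag}(i,-i)$ by $\left(\begin{smallmatrix}a&b\\c&d\end{smallmatrix}\right)$, into $ad+bc=0$; with $ad-bc=1$ this yields $ad=\tfrac12$, $bc=-\tfrac12$, so $b,d\ne0$ and the denominator is a nonvanishing unit at $z=0$. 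Hence $q\sim z^{2k-1}\,\mathrm{d}z^2$, a simple pole when $k=0$ and a zero of order $2k-1$ when $k>0$. For an integer angle $m$ the same formula holds with $\alpha=m$, while the requirement $f(\cdot)\in\mathbb{H}$ forces $bd\ne0$ (otherwise a small punctured disc would map partly into the lower half-plane), giving $q\sim z^{2m-2}\,\mathrm{d}z^2$, a zero of order $2m-2$. This establishes (1), (2), and the meromorphic extension of $q$ across ${\rm supp\,D}$.

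For (3) it remains to note that at a smooth point $\mathfrak{f}$ is a local biholomorphism, so $\mathfrak{f}'\ne0$ and $\mathfrak{f}\in\mathbb{H}\setminus\{0\}$ make $q$ holomorphic and nonvanishing there. Combining this with the local orders above gives ${\rm ord}_p(q)=2k-1=2(\alpha-1)$ at a half-integer point and ${\rm ord}_p(q)=2m-2=2(m-1)$ at an integer point, which is exactly $(q)=2\,\mathrm{D}$. I expect the second step, together with the vanishing analysis of $b,d$, to be the main obstacle: correctly identifying the conjugacy class of the local monodromy inside the two-component group $H_2'$, and then squeezing out of the anti-diagonal constraint (respectively the $\mathbb{H}$-valued constraint) the exact (non)vanishing of $b$ and $d$ that separates a pole of $q$ from a zero --- this bookkeeping is what produces the half-integer angles and their prescribed orders.
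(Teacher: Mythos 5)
Your proof is correct, and its skeleton matches the paper's (single-valuedness of $q$ because the two components of $H_2'$ act on $\omega=\mathrm{d}f/f$ by $\omega\mapsto\pm\omega$; the normal forms of \cite[Lemma 2.4]{FSX19}; the nonvanishing of $bd$ at integer cone points; assembling $(q)=2\mathrm{D}$). Where you genuinely diverge is the central step of constraining the singularities. The paper proceeds by brute force: it writes out the monodromy functional equations $\lambda'\mathfrak{f}(z)=\mathfrak{f}(e^{2\pi\sqrt{-1}}z)$ and $-\lambda/\mathfrak{f}(z)=\mathfrak{f}(e^{2\pi\sqrt{-1}}z)$ in the cusp and cone normal forms, expands them into polynomial systems in $a,b,c,d,\lambda$, and shows these have no solutions unless $1+e^{2\pi\sqrt{-1}\alpha}=0$, extracting $ad=-bc=\tfrac12$ from the same system. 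You instead argue by conjugation invariants: the local monodromy is $M R M^{-1}$ with $R$ parabolic (cusp) or elliptic of trace $\pm 2\cos\pi\alpha$ (cone angle $2\pi\alpha$, $\alpha\notin\mathbb{Z}$), while $H_2'$ contains no parabolics and its only nontrivial elliptics are the trace-zero anti-diagonal involutions; this kills cusps and forces $\alpha\in\tfrac12+\mathbb{Z}_{\geq0}$ at once, after which the single computation $M\,\mathrm{diag}(i,-i)\,M^{-1}$ anti-diagonal $\Leftrightarrow ad+bc=0$ recovers $ad=-bc=\tfrac12$. Your route is shorter and more conceptual, and it transfers immediately to any candidate subgroup once one knows which parabolic and elliptic conjugacy classes it contains; the paper's route is more elementary (no appeal to trace classification of $\mathrm{PSL}(2,\mathbb{C})$ conjugacy classes) and produces the coefficient relations as a byproduct of the same system rather than as a separate computation. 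One cosmetic point: in the half-integer case your denominator $(az^{\alpha}+b)^{2}(cz^{\alpha}+d)^{2}$ involves the multivalued $z^{\alpha}$; it is worth noting, as the paper does, that $ad+bc=0$ collapses it to $(acz^{2\alpha}+bd)^{2}$ with $z^{2\alpha}=z^{2k+1}$ single-valued, so that the order count $2k-1$ is read off from a genuinely meromorphic expression.
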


\begin{proof}
Suppose that $p\in \Sigma\backslash {\rm supp\, D}$, we choose a function element $\mathfrak{f}$ of $f$ near $p$. Since $f$ has  monodromy in $H_{2}'$ and takes values in ${\Bbb H}$, $q:=\left(\frac{d\mathfrak{f}}{\mathfrak{f}}\right)^{\otimes 2}$ does not depend on the choice of $\mathfrak{f}$ and $q=\left(\frac{{\rm d}f}{f}\right)^{\otimes 2}$ is a holomorphic quadratic differential on $\Sigma\backslash {\rm supp\, D}$.
We postpone to (1-2) the proof that $q$ extends to a meromorphic quadratic differential on $\Sigma$.

\begin{enumerate}

\item We show at first that {\it $\mathrm{d} s^2$ has only cone singularities} in the following. Suppose that $p\in {\rm supp\,D}$ is a cusp one of $\mathrm{d} s^2$. By \rm{\cite[Lemma 2.4]{FSX19}}, we can choose a function element $\mathfrak{f}$ near $p$ and a complex coordinate $z$ near $p$ such that $\mathfrak{f}=\frac{a\log z+b}{c\log z+d}$ with $ad-bc=1$. Since $f$ has monodromy $H_{2}'$,  there exists $\lambda>0$ such that
$-\frac{\lambda}{\frac{a\log z+b}{c\log z+d}}= \frac{a(\log z+2\pi\sqrt{-1})+b}{c(\log z+2\pi\sqrt{-1})+d}$,
or there exists $\lambda'>0$ such that
$\lambda'{\frac{a\log z+b}{c\log z+d}}= \frac{a(\log z+2\pi\sqrt{-1})+b}{c(\log z+2\pi\sqrt{-1})+d}.$ However, neither of these two equations has any solutions.

Suppose that\ $\mathrm{d} s^2$ has a cone singularity at $p\in \rm {\rm supp\,D}$ with angle $2\pi\alpha>0$, where $\alpha$ is a non-integer. Then we can choose a function element $\mathfrak{f}$ near $p$ and a complex coordinate $z$ near $p$ such that $\mathfrak{f}=\frac{az^{\alpha}+b}{cz^{\alpha}+d}$ with $ad-bc=1$. Since $f$ has monodromy in $H_{2}'$, then either there exists $0<\lambda'\neq 1$ such that
$ \lambda'\mathfrak{f}= \lambda'\frac{az^{\alpha}+b}{cz^{\alpha}+d} = \frac{a e^{2\pi\sqrt{-1}\alpha}z^{\alpha}+b}{ce^{2\pi\sqrt{-1}\alpha} z^{\alpha}+d}$,
or there exists $\lambda>0$ such that
$-\frac{\lambda}{\mathfrak{f}}= -\frac{\lambda}{\frac{az^{\alpha}+b}{cz^{\alpha}+d}}= \frac{a e^{2\pi\sqrt{-1}\alpha}z^{\alpha}+b}{ce^{2\pi\sqrt{-1}\alpha}z^{\alpha}+d}$.
However, there is no solution for the former equation, and the latter one is equivalent to the system
$\left\{\  \begin{aligned} a^{2}+\lambda c^{2} &=0 \\
(ab+\lambda cd)\big(1+e^{2\pi\sqrt{-1}\alpha}\big) &=0\,. \\
 b^{2}+\lambda d^{2} &=0 \end{aligned} \right.$
If $1+e^{2\pi\sqrt{-1}\alpha}\neq 0$, then there is no solution for this system. We must have  $1+e^{2\pi\sqrt{-1}\alpha}=0$ and then $\alpha=\frac{1}{2}+k$ for some $k\in\mathbb{Z}_{\geq 0}$. Then we obtain from the system that $ad=-bc=1/2$,
$\mathfrak{f}= \frac{az^{\alpha}+b}{cz^{\alpha}+d},\   \mathfrak{f}'= \frac{\alpha z^{\alpha-1}}{(cz^{\alpha}+d)^{2}},$
$\mathfrak{f}(0)=\frac{b}{d}\in\mathbb{H},$ and
$
q=\frac{\alpha^{2}z^{2\alpha-2}}{(acz^{2\alpha}+bd)^{2}}{\rm d}z^{2}.
$
Therefore, if $k=0$, then $p$ is a simple pole of $\varphi$; if $k>0$, then $p$ is a zero of $\varphi$ with order $2k-1$.

\item Suppose that\ $\mathrm{d} s^2$ has a cone singularity at $p\in \rm supp\,D$ with  angle $2\pi m\in 2\pi\,\mathbb{Z}_{>1}$. Then
 we choose a complex coordinate $z$ near $p$ such that $\mathfrak{f}=\frac{az^{m}+b}{cz^{m}+d}$ with $ad-bc=1$ and
$q=\frac{m^{2}z^{2m-2}}{(az^{m}+b)^{2}(cz^{m}+d)^{2}}{\rm d}z^{2}$.
Since $f$ takes values in ${\Bbb H}$, we have $bd\neq 0$, which implies that $p$ is a zero of $\varphi$ with order $2m-2$, and $\lim_{z\rightarrow p}f(z)=\frac{b}{d}\in\mathbb{H}$.

\item The equation $(q)=2\cdot {\rm D}$ follows from (1-2).
\end{enumerate}

\end{proof}

\begin{theorem}\rm{(\cite[p.181]{FK92})}.
\label{thm:Green}
{\it Let $\Sigma$ be a Riemann surface and $p$ a point on $\Sigma$. There exists the Green function with singularity $p$ on $\Sigma$
if and only if $\Sigma$ is hyperbolic.}
\end{theorem}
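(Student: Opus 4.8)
The plan is to prove the two implications separately, using the paper's characterization (Definition \ref{defi:classification}) that $\Sigma$ is \emph{hyperbolic} exactly when it carries a negative non-constant subharmonic function; negating, this is the same as $\Sigma$ carrying a positive non-constant superharmonic function. Throughout, $g_p$ denotes the Green function with singularity $p$: the function harmonic and positive on $\Sigma\setminus\{p\}$ with $g_p(z)+\log|z|$ bounded in a coordinate $z$ centred at $p$, characterised as the smallest positive superharmonic function with this logarithmic pole; in particular $\inf_\Sigma g_p=0$.

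For the implication that the existence of $g_p$ forces $\Sigma$ to be hyperbolic, I would simply truncate. Since $g_p$ is positive and superharmonic on $\Sigma$ (harmonic off $p$, with $g_p\to+\infty$ at $p$), the function $\min(g_p,1)$ is again positive and superharmonic, so $u:=-\min(g_p,1)$ is subharmonic with values in $[-1,0)$. Because $g_p>1$ near $p$ while $\inf_\Sigma g_p=0$, the function $u$ equals $-1$ near $p$ and takes values arbitrarily close to $0$ elsewhere; hence it is non-constant. Thus $u$ is a negative non-constant subharmonic function and $\Sigma$ is hyperbolic.

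The substantive direction is that hyperbolicity produces $g_p$, and here I would argue by exhaustion. Fix a regular exhaustion $\Omega_1\subset\Omega_2\subset\cdots$ of $\Sigma$ by relatively compact domains with smooth boundary, with $p\in\Omega_1$. Each $\Omega_n$ carries its own Green function $g_n$ (obtained by solving the Dirichlet problem on the relatively compact $\Omega_n$): harmonic and positive on $\Omega_n\setminus\{p\}$, vanishing on $\partial\Omega_n$, with the prescribed pole. Applying the maximum principle to $g_{n+1}-g_n$ on $\Omega_n\setminus\{p\}$ shows $(g_n)$ is increasing, so $g:=\lim_n g_n$ exists in $(0,+\infty]$; by Harnack's principle $g$ is either identically $+\infty$ on $\Sigma\setminus\{p\}$, or finite, positive, harmonic off $p$ with the correct logarithmic singularity, in which case $g=g_p$. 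Everything reduces to \textbf{ruling out $g\equiv+\infty$ when $\Sigma$ is hyperbolic}.

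To exclude that alternative I would pass through the harmonic measure of the ideal boundary. Fixing a coordinate disk $D\ni p$ with $\overline{D}\subset\Omega_1$, let $\omega_n$ be harmonic on $\Omega_n\setminus\overline{D}$ with $\omega_n=0$ on $\partial D$ and $\omega_n=1$ on $\partial\Omega_n$; the maximum principle makes $(\omega_n)$ decreasing, with limit $\omega\in[0,1]$. The key lemma is that $\omega\equiv0$ forces every positive superharmonic function on $\Sigma$ to be constant: normalising such a $w$ so that $\inf_\Sigma w=0$ (the infimum is not attained, by the minimum principle), the comparison $w\ge(\min_{\partial D}w)(1-\omega_n)$ on $\Omega_n\setminus\overline{D}$ — valid by the minimum principle since both sides are controlled on $\partial D$ and on $\partial\Omega_n$ — yields, as $n\to\infty$ and $\omega\equiv0$, that $w\ge\min_{\partial D}w>0$ on all of $\Sigma$, contradicting $\inf_\Sigma w=0$. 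Hence a hyperbolic $\Sigma$ has $\omega\not\equiv0$. I expect the \textbf{main obstacle} to be the remaining dichotomy $g\equiv+\infty\iff\omega\equiv0$: this is the quantitative step, converting the qualitative nondegeneracy of $\omega$ into a uniform bound $\sup_n\max_{\partial D}g_n<\infty$, and it requires comparing the fluxes of $g_n$ and $\omega_n$ across $\partial D$ (a naive direct comparison of $g_n$ with a multiple of $w$ is circular, as the admissible constant depends on $\max_{\partial D}g_n$ itself). Once this link is established, the positivity, harmonicity and minimality of the limit $g$ follow from standard normal-family and Harnack arguments.
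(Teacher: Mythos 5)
Two preliminary remarks on context. The paper does not prove Theorem \ref{thm:Green} at all: it is quoted from \cite[p.181]{FK92}, so your attempt can only be measured against the standard potential-theoretic argument, not against anything in this manuscript. Your easy direction is correct: truncating the Green function, $u=-\min(g_p,1)$, gives a negative non-constant subharmonic function, and this is literally the device the paper itself uses inside the proof of Theorem \ref{thm:H_{2}'no} (there written $\max\{-1,-G\}$, with non-constancy deduced from the minimality of $G$, just as your appeal to $\inf_\Sigma g_p=0$ does). Your key lemma in the converse direction --- if the harmonic measure $\omega=\lim_n\omega_n$ of the ideal boundary vanishes, then every positive superharmonic function on $\Sigma$ is constant, so a hyperbolic $\Sigma$ has $\omega\not\equiv0$ --- is also correct as written.

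However, the proof is not complete, and you say so yourself: the equivalence $g\equiv+\infty\iff\omega\equiv0$, which you call the main obstacle, is exactly the substance of the theorem and is nowhere established in your text. Without it you have two disconnected facts --- hyperbolicity gives $\omega\not\equiv0$, and finiteness of $g=\lim_n g_n$ gives the Green function --- and no bridge between them. The bridge is the flux comparison you allude to, and it is short enough that leaving it out is not acceptable. Write $\mu_n=\min_{\partial D}g_n$, $\lambda_n=\max_{\partial D}g_n$, and let $c_n\ge0$ denote the flux of $\omega_n$ across a cycle homologous to $\partial D$ in $\Omega_n\setminus\overline{D}$. From $\mu_n(1-\omega_n)\le g_n\le\lambda_n(1-\omega_n)$ on $\Omega_n\setminus\overline{D}$, where all three functions vanish on $\partial\Omega_n$, comparison of outward normal derivatives along $\partial\Omega_n$, combined with the fact that the total flux of $g_n$ there is (in absolute value) $2\pi$, carried by the logarithmic pole, yields $\mu_n c_n\le 2\pi\le\lambda_n c_n$. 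Harnack's inequality for the positive harmonic functions $g_n$ on the fixed domain $\Omega_1\setminus\{p\}$ gives $\lambda_n\le C\mu_n$ with $C$ independent of $n$. Now if $\omega\not\equiv0$, then $c_n\to c>0$: locally uniform convergence identifies $c$ with the flux of $\omega$, and in a coordinate annulus around $\partial D$ the mean value $\frac{1}{2\pi}\int_0^{2\pi}\omega(re^{\sqrt{-1}\theta})\,{\rm d}\theta=a\log r$ with $\omega>0$ forces $a>0$. Hence $\mu_n\le 2\pi/c_n$ is bounded, so $\lambda_n\le C\mu_n$ is bounded and $g\not\equiv+\infty$; conversely, $\omega\equiv0$ forces $c_n\to0$, hence $\lambda_n\ge 2\pi/c_n\to\infty$ and, by Harnack again, $g\equiv+\infty$. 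Until some such argument is written out, your proposal is a correct outline of the classical proof with its central quantitative step missing.
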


\begin{theorem}
\label{thm:H_{2}'no}
There exists no $H_2'$-metric on a non-hyperbolic Riemann surface.
\end{theorem}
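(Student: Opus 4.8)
The plan is to mimic the proof of Theorem \ref{thm:noH_{3}}: produce on $\Sigma$ a bounded, nonconstant subharmonic function, which forces $\Sigma$ to be hyperbolic by Definition \ref{defi:classification}. The point is that $H_2'$ is, up to conjugacy, the stabilizer in $\mathrm{PSL}(2,\mathbb{R})$ of the geodesic in $\mathbb{H}$ joining $0$ and $\infty$ (the imaginary axis): the diagonal part $H_2$ acts by $z\mapsto a^2 z$ and fixes $\arg z$, while an element of the nontrivial coset acts by $z\mapsto -\lambda/z$ and sends $\arg z\mapsto \pi-\arg z$. Consequently, although a branch of $\arg f=\Im\log f$ is only locally defined, the function
\[
u := \left| \arg f - \frac{\pi}{2}\right|
\]
is unchanged under the whole monodromy group $H_2'$, and is therefore a single-valued function on $\Sigma\setminus\mathrm{supp}\,\mathrm{D}$.

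I would then verify the three required properties. First, $u$ is bounded: since $f$ takes values in $\mathbb{H}$ we have $\arg f\in(0,\pi)$, so $0\le u<\pi/2$. Second, $u$ is subharmonic: on any simply connected patch a branch of $\arg f-\pi/2$ is harmonic, and the absolute value of a harmonic function is subharmonic, being the maximum of two harmonic functions. Third, $u$ is nonconstant: a developing map is locally univalent, hence open, so $\arg f$ is not locally constant; if $u$ were constant, then $\arg f$ would take values in the two-point set $\{\pi/2\pm u\}$ and, being continuous on a connected patch, would be constant, forcing $f$ into a single ray and hence constant, a contradiction.

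It remains to pass from $\Sigma\setminus\mathrm{supp}\,\mathrm{D}$ to $\Sigma$. By Lemma \ref{lem:H2'} all singularities of $\mathrm{d}s^2$ are cone singularities, and near each of them the relevant function element of $f$ extends holomorphically with a finite limit lying in $\mathbb{H}$; hence $\arg f$, and therefore $u$, extends continuously across each $p_i$. Since a bounded subharmonic function extends across an isolated point, $u$ extends to a bounded nonconstant subharmonic function on all of $\Sigma$. Subtracting its supremum gives a negative nonconstant subharmonic function, so $\Sigma$ is hyperbolic. The only genuinely delicate point is the well-definedness and nonconstancy of $u$, i.e.\ pinning down exactly how $H_2'$ acts on $\arg f$; the extension across the cone points is then routine.

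Alternatively, one can exploit the character quadratic differential $q$ of Lemma \ref{lem:H2'}. Since $(q)=2\mathrm{D}$ has odd order precisely at the half-integer cone points, the double cover of $\Sigma$ branched there carries a single-valued holomorphic square root of $\pi^{*}q$, on which the pulled-back metric becomes an $H_2$-metric; Theorem \ref{thm:H_{2}no} then makes the cover hyperbolic, and Theorem \ref{thm:Green} lets one descend a Green function to $\Sigma$ by averaging over the two sheets. In this second route the main obstacle is precisely the descent step: checking that hyperbolicity (equivalently, failure to lie in the class $O_G$) is inherited downward through a finite branched cover.
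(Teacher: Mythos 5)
Your primary argument is correct, and it takes a genuinely different route from the paper. The paper proves this theorem by reducing to the $H_2$ case: it uses the character quadratic differential $q$ of Lemma \ref{lem:H2'} to build the canonical double cover $\pi:\widehat{\Sigma}\to\Sigma$ branched at the odd-order points of $(q)=2\mathrm{D}$, observes that $f\circ\pi$ has monodromy in $H_2$, invokes Theorem \ref{thm:H_{2}no} (which itself rests on the Lyons--Sullivan Lemma \ref{lem:Sullivan} applied to the maximal abelian cover) to conclude $\widehat{\Sigma}$ is hyperbolic, and then descends: it takes the Green function $G$ on $\widehat{\Sigma}$ from Theorem \ref{thm:Green}, truncates it to the negative subharmonic function $\max\{-1,-G\}$ (not $G$ itself, which is what makes the averaging legitimate), and averages over the deck involution $\tau$ to get a negative nonconstant subharmonic function on $\Sigma$ --- exactly the descent step you flag as delicate in your alternative sketch, which is in fact the paper's proof. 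Your main construction bypasses all of this: since $H_2'$ is precisely the stabilizer of the geodesic $\{iy:y>0\}$, with the diagonal part fixing $\arg z$ and the antidiagonal part sending $\arg z\mapsto\pi-\arg z$, the function $u=\lvert\arg f-\pi/2\rvert$ is single-valued on $\Sigma\setminus\mathrm{supp}\,\mathrm{D}$, bounded in $[0,\pi/2)$, subharmonic (locally the modulus of the harmonic function $\Im\log\mathfrak{f}-\pi/2$), and nonconstant by openness of $f$; it extends across the cone points by Lemma \ref{thm:TR95}, and $u-\pi/2$ is then negative, nonconstant and subharmonic. This is the exact analogue of the paper's own proof of Theorem \ref{thm:noH_{3}} (there $\lvert f\rvert$ measures distance to the fixed point of $\mathrm{U}(1)$; here $u$ is a monotone function of the hyperbolic distance to the $H_2'$-invariant geodesic), it needs from Lemma \ref{lem:H2'} only that all singularities are cone points with function elements tending to limits in $\mathbb{H}$, and it avoids the double cover, the Green function, and Lyons--Sullivan entirely. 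Two small corrections: at a half-integer cone point the function element is a Puiseux series in $z^{1/2}$, so it extends \emph{continuously} (not holomorphically) with limit $b/d\in\mathbb{H}$, which is all you need; and in fact even this is dispensable, since $u<\pi/2$ makes Lemma \ref{thm:TR95} applicable with no information at the punctures. What the paper's longer route buys is the reduction of $H_2'$ to $H_2$ and the reusable technique of pushing subharmonic functions down a finite branched cover; what yours buys is a shorter, self-contained argument uniform with the $H_3$ case.
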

\begin{proof}
Let $\mathrm{d} s^2$ be an $H_2'$-metric on a Riemann surface $\Sigma$, $f:\Sigma\backslash {\rm supp\, D}\to \mathbb{H}$ a developing map of it with monodromy in $H_2'$ and
$q=\left(\frac{{\rm d}f}{f}\right)^{\otimes 2}$ the character quadratic differential of $\mathrm{d} s^2$.  By Lemma \ref{lem:H2'}, we have $(q)=2{\rm D}$, where $q$ has at worst simple poles.
As the proof of \rm{\cite[Lemma 3.1]{SCLX18}}, the quadratic differential $q$ induces the canonical double cover $\pi:\widehat{\Sigma}\rightarrow\Sigma$, branching over critical points of $q$ whose orders are odd, such that $\pi^{*}\varphi=\omega^{\otimes 2}$, where $\omega$ is a holomorphic one-form on $\widehat{\Sigma}$. Define $\widehat{f}:=f\circ \pi$, which is a multi-valued holomorphic function on $\widehat{\Sigma}$. Since $\pi^{*}q=\pi^{*}\left(\frac{{\rm d}f}{f}^{\otimes 2}\right)=\left(\frac{d\widehat{f}}{\widehat{f}}\right)^{\otimes 2}=
\omega^{\otimes 2}$, we have $\omega={\rm d}(\log\widehat{f})$ (up to sign). Hence,  $\widehat{f}$ has monodromy in $H_{2}$ and is a developing map of the pull-back metric $\pi^*\mathrm{d} s^2$ on $\widehat{\Sigma}$, which is then an $H_2$-metric.
By Theorem \ref{thm:H_{2}no}, $\widehat{\Sigma}$ is a hyperbolic Riemann surface.

By Theorem \ref{thm:Green}, we cantake the Green function $G$ on $\widehat{\Sigma}$ with singularity $p\in \widehat{\Sigma}$. Then $G$ is a positive harmonic function on $\Sigma\backslash \{p\}$, $G+\log\,|z|$ is harmonic near $p$, where $z$ is a complex coordinate centered at $p$. And $G$ is the minimal function satisfying the aforementioned two properties. Hence, $u:=\max\ \{-1, -G\}$ is a negative subharmonic function on $\widehat{\Sigma}$ and equals constant $-1$ near $p$. Moreover, it is neither a constant nor harmonic by the minimal property of $G$.
Note that the double cover $\pi$ is a branched Galois cover whose deck transformation group is generated by the holomorphic involution $\tau$ on $\widehat{\Sigma}$, where $\tau$ acts on $\pi^{-1}(x)$ as a swap for all point $x$ outside the critical points of $q$ whose orders are odd.   Then
$F(x):=\frac{u(y)+u\circ\tau(y)}{2}$,
where $x\in \Sigma,\ y\in \pi^{-1}(x)$, is a negative subharmonic function on $\Sigma$.
Since $u$ is not harmonic on $\Sigma$, $F$ is not a constant on $\widehat{\Sigma}$, which implies that $\Sigma$ is a hyperbolic Riemann surface.

\end{proof}

\begin{remark}
Though Theorem \ref{thm:H_{2}no} is contained in Theorem \ref{thm:H_{2}'no}, we intentionally spend the preceding section to narrate the former theorem and its preliminary lemma (Lemma \ref{lem:H2}) in detail since they have independent interest.
\end{remark}

\section{$L$-metrics}

In this section, we investigate an $L$-metric $\mathrm{d} s^2$ on a Riemann surface $\Sigma$.
In the first subsection, we find that $\mathrm{d} s^2$ induces an affine connection on $\Sigma$ (Lemma \ref{lem:L}), by which we prove that there exists no $L$-metric on a compact Riemann surface (Corollary \ref{Cor:2}). In the second one, we prove that there exists no $L$-metric on $\mathbb{C}$, $\mathbb{C}-\{0\}$ or a punctured torus.  We pay special attention to $L_0$-metrics in the last subsection and prove the non-existence of them on a non-hyperbolic Riemann surface.

\subsection{There exists no $L$-metric on a compact Riemann surface}

\begin{definition}(\rm{\cite{RC67}}, \rm{\cite[p.264]{RM72}})
Let $\Sigma$ be a Riemann surface, $\{U_{\alpha},z_{\alpha}\}$  a complex atlas on $\Sigma$, and
$\psi_{\alpha\beta}=z_{\alpha}\circ z_{\beta}^{-1}:z_{\beta}(U_{\alpha}\cap U_{\beta})\rightarrow z_{\alpha}(U_{\alpha}\cap U_{\beta})$
the coordinate transition functions. We call a family of meromorphic functions $\{h_{\alpha}:U_{\alpha}\rightarrow\overline{\mathbb{C}}=\mathbb{C}\cup\infty\}$ an
 {\it affine connection} on $\Sigma$ if for all $p\in U_{\alpha}\cap U_{\beta}$ there holds
$h_{\beta}\big(z_{\beta}(p)\big)=h_{\alpha}\big(z_{\alpha}(p)\big)\cdot\left(\frac{{\rm d}z_{\beta}}{{\rm d}z_{\alpha}}\right)^{-1}+\theta_{1}(\psi_{\alpha\beta}\big(z_{\beta}(p))\big)$,
where $\theta_{1}\psi(z)=\frac{\psi''(z)}{\psi'(z)}$.

\end{definition}

Let $h=\{h_{\alpha}\}$ be an affine connection on $\Sigma$. For each point $p$ in $U_{\alpha}$, we choose a small loop  $\Gamma$ winding around $p$ on counterclockwise and define the {\it residue} of $h$ at $p$ by
${\rm Res}(h,\,p)=\frac{1}{2\pi i}\int_{\Gamma}h_{\alpha}{\rm d}z_{\alpha}$,
and the {\it residue} of $h$ by
${\rm Res}(h)=\sum_{p\in \Sigma}{\rm Res}(h,p)$
provided that there exist only finitely many nonzero summands.
Note that ${\rm Res}(h,p)$ is independent of the choice of representatives for $h$
(\rm{\cite[p.270]{RM72}}).

\begin{lemma}\rm{\cite[Lemma 2]{RM72}}
\label{lem:affine}
{\it Let $h=\{h_{\alpha}\}$ be an affine connection on a compact Riemann surface $X$. Then ${\rm Res}(h)=-\chi(X)$, where $\chi(X)$ is the Euler number of $X$.}

\end{lemma}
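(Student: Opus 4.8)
The plan is to show that the total residue $\mathrm{Res}(h)$ is independent of the affine connection $h$, and then to evaluate it on a single connection built from a meromorphic one-form, where the computation reduces to the degree of the canonical divisor.

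First I would observe that the affine connections on $X$ form an affine space modeled on the meromorphic one-forms. Indeed, if $h=\{h_\alpha\}$ and $h'=\{h'_\alpha\}$ are two affine connections, subtracting their transformation laws makes the inhomogeneous term $\theta_1(\psi_{\alpha\beta})$ cancel, leaving $h_\beta-h'_\beta=(h_\alpha-h'_\alpha)\left(\frac{{\rm d}z_\beta}{{\rm d}z_\alpha}\right)^{-1}$. This is precisely the cocycle rule for the coefficients of a meromorphic one-form, so $\eta:=(h_\alpha-h'_\alpha)\,{\rm d}z_\alpha$ is a globally defined meromorphic one-form on $X$. Since ${\rm Res}(h,p)-{\rm Res}(h',p)=\frac{1}{2\pi i}\int_{\Gamma}(h_\alpha-h'_\alpha)\,{\rm d}z_\alpha$ is exactly the residue of $\eta$ at $p$, the classical residue theorem on a compact Riemann surface (the total residue of a meromorphic one-form vanishes) gives ${\rm Res}(h)={\rm Res}(h')$. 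Hence ${\rm Res}(h)$ depends only on $X$, and it suffices to compute it for one convenient connection.

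Next I would produce such a connection from a nonzero meromorphic one-form $\omega$ on $X$ (for instance the differential of a nonconstant meromorphic function, which exists on any compact Riemann surface), written locally as $\omega=\phi_\alpha(z_\alpha)\,{\rm d}z_\alpha$. Set $h_\alpha:=\phi_\alpha'/\phi_\alpha$, the logarithmic derivative of $\phi_\alpha$. Using $\phi_\beta=(\phi_\alpha\circ\psi_{\alpha\beta})\cdot\psi_{\alpha\beta}'$ and differentiating $\log\phi_\beta=\log(\phi_\alpha\circ\psi_{\alpha\beta})+\log\psi_{\alpha\beta}'$ in $z_\beta$, the chain rule yields $h_\beta=h_\alpha\left(\frac{{\rm d}z_\beta}{{\rm d}z_\alpha}\right)^{-1}+\frac{\psi_{\alpha\beta}''}{\psi_{\alpha\beta}'}$, which is exactly the affine connection transformation law with $\theta_1\psi_{\alpha\beta}=\psi_{\alpha\beta}''/\psi_{\alpha\beta}'$. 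The functions $h_\alpha$ have poles only at the zeros and poles of $\omega$, and at such a point $\Gamma$ bounds a disc on which ${\rm Res}(h,p)=\frac{1}{2\pi i}\int_{\Gamma}{\rm d}(\log\phi_\alpha)={\rm ord}_p(\omega)$ by the argument principle.

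Finally, summing over all points gives ${\rm Res}(h)=\sum_{p}{\rm ord}_p(\omega)=\deg({\rm div}\,\omega)=\deg K_X=2g-2=-\chi(X)$, where $g$ is the genus of $X$ and $K_X$ its canonical bundle; the equality $\deg K_X=2g-2$ is the classical statement that every meromorphic one-form has $2g-2$ zeros minus poles counted with multiplicity. Combined with the connection-independence established in the first step, this proves ${\rm Res}(h)=-\chi(X)$ for an arbitrary affine connection $h$. The step I expect to require the most care is the verification that the logarithmic-derivative family genuinely transforms as an affine connection, namely that the chain-rule computation reproduces the Schwarzian-type term $\psi_{\alpha\beta}''/\psi_{\alpha\beta}'$ and not some other expression; once this is confirmed, the remainder is bookkeeping with the residue theorem and the degree of the canonical divisor.
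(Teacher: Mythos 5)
The paper does not prove this lemma at all: it is quoted from Mandelbaum \cite[Lemma 2]{RM72}, so there is no internal proof to compare yours against. Your argument is correct and complete, and it is essentially the classical proof underlying the cited result: the difference of two affine connections is (the coefficient of) a meromorphic one-form, so the residue theorem on a compact surface shows $\mathrm{Res}(h)$ is independent of $h$; the logarithmic derivative $h_\alpha=\phi_\alpha'/\phi_\alpha$ of a nonzero meromorphic one-form $\omega=\phi_\alpha\,\mathrm{d}z_\alpha$ (which exists since $X$ carries a nonconstant meromorphic function) is an affine connection with $\mathrm{Res}(h,p)=\mathrm{ord}_p(\omega)$; and summing gives $\deg K_X=2g-2=-\chi(X)$. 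Your chain-rule check that $\phi_\alpha'/\phi_\alpha$ reproduces the inhomogeneous term $\psi_{\alpha\beta}''/\psi_{\alpha\beta}'$ is exactly the point requiring care, and it is correct; the only implicit hypothesis, finiteness of the set of points with nonzero residue so that $\mathrm{Res}(h)$ is defined, follows from compactness of $X$ together with the local meromorphy of the $h_\alpha$.
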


\begin{lemma}
\label{lem:L}
Let\ $\mathrm{d} s^2$\ be an $L$-metric on a Riemann surface\ $\Sigma$, representing the divisor $\textup{D}=\sum_{i=1}^\infty(\theta_{i}-1)p_{i},\  0\leq \theta_{i}\not=1$. Let $f:\Sigma\backslash {\rm supp\, D}\longrightarrow \mathbb{H}$\ be a developing map of\ $\mathrm{d} s^2$, whose monodromy lies in $L$. Let $\{U_{\alpha},z_{\alpha}\}$ be a complex atlas on $\Sigma$ and
$\psi_{\alpha\beta}=z_{\alpha}\circ z_{\beta}^{-1}:z_{\beta}(U_{\alpha}\cap U_{\beta})\rightarrow z_{\alpha}(U_{\alpha}\cap U_{\beta})$ the coordinate transition functions. Then we have the following.
\begin{enumerate}
\item
$
h:=\{h_{\alpha}:=f''/f':U_{\alpha}\rightarrow\overline{\mathbb{C}}\}
$
is an affine connection on $\Sigma\backslash {\rm supp\, D}$, which extends to an affine connection on $\Sigma$.

\item A singularity of\ $\mathrm{d} s^2$\ is either a cusp singularity or a cone one with angle $2\pi m\in 2\pi\,\mathbb{Z}_{>1}$. Moreover, both a cusp singularity and a cone one with angle $2\pi m$ are simple poles of $f''/f'$, where the residues of $h$ are $-1$ and $m-1$, respectively.
\end{enumerate}

\end{lemma}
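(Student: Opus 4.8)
The plan is to exploit the fact that, viewed as Möbius transformations of $\mathbb{H}$, the elements of $L$ act by the real-affine maps $w\mapsto \lambda w+\mu$ with $\lambda>0$ and $\mu\in\mathbb{R}$, and that the expression $f''/f'$ is built precisely so as to be insensitive to such post-composition: if $\mathfrak{f}$ is a function element of $f$ and $M(w)=\lambda w+\mu\in L$, then $(M\circ\mathfrak{f})''/(M\circ\mathfrak{f})'=(\lambda\mathfrak{f}'')/(\lambda\mathfrak{f}')=\mathfrak{f}''/\mathfrak{f}'$. Since any two function elements of the multivalued $f$ differ by an element of the monodromy group, which lies in $L$, the quantity $h_{\alpha}=f''/f'$ does not depend on the chosen branch and is therefore single-valued on $\Sigma\backslash {\rm supp\,D}$. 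As the developing map is locally univalent, $f'$ has no zero there, so $h_{\alpha}$ is holomorphic on $\Sigma\backslash {\rm supp\,D}$. This settles the well-definedness underlying statement (1).

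For the cocycle identity in (1) I would simply differentiate. Writing $z_{\alpha}=\psi_{\alpha\beta}(z_{\beta})$ and applying the chain rule to a local branch of $f$ gives $\frac{{\rm d}f}{{\rm d}z_{\beta}}=\frac{{\rm d}f}{{\rm d}z_{\alpha}}\,\psi_{\alpha\beta}'$ and $\frac{{\rm d}^{2}f}{{\rm d}z_{\beta}^{2}}=\frac{{\rm d}^{2}f}{{\rm d}z_{\alpha}^{2}}(\psi_{\alpha\beta}')^{2}+\frac{{\rm d}f}{{\rm d}z_{\alpha}}\psi_{\alpha\beta}''$; dividing the second relation by the first yields
\[
h_{\beta}=h_{\alpha}\,\psi_{\alpha\beta}'+\frac{\psi_{\alpha\beta}''}{\psi_{\alpha\beta}'}=h_{\alpha}\left(\frac{{\rm d}z_{\beta}}{{\rm d}z_{\alpha}}\right)^{-1}+\theta_{1}(\psi_{\alpha\beta}),
\]
which is exactly the defining identity of an affine connection. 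The extension of $h$ across ${\rm supp\,D}$ to a meromorphic affine connection on all of $\Sigma$ then follows once statement (2) shows that $f''/f'$ has at worst a simple pole at each singular point, since the local representatives $h_{\alpha}$ are allowed to be meromorphic.

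For statement (2) I would run the same kind of case analysis as in Lemmas \ref{lem:H2} and \ref{lem:H2'}, feeding in the local normal forms of \cite[Lemma 2.4]{FSX19}. First, non-integer cone angles are excluded: at a cone point of angle $2\pi\alpha$ with $\alpha\notin\mathbb{Z}$, the local monodromy is conjugate to $w\mapsto e^{2\pi\sqrt{-1}\alpha}w$ with $e^{2\pi\sqrt{-1}\alpha}\neq1$, an elliptic element, whereas every nontrivial element of $L$ is either a translation (parabolic) or a dilation (hyperbolic); hence only cusps and cone points of angle $2\pi m\in 2\pi\,\mathbb{Z}_{>1}$ survive. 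For a cusp I would start from $\mathfrak{f}=\frac{a\log z+b}{c\log z+d}$: its monodromy, conjugating the standard translation $s\mapsto s+2\pi\sqrt{-1}$ (where $s=\log z$), is parabolic, and a parabolic lying in $L$ must fix $\infty$, so matching fixed points forces $c=0$, whence $\mathfrak{f}=\kappa\log z+\text{const}$ and a one-line computation gives $f''/f'=-1/z$, a simple pole of residue $-1$. For a cone point of angle $2\pi m$ I would take $\mathfrak{f}=\frac{az^{m}+b}{cz^{m}+d}$ with $ad-bc=1$; since $f$ takes values in $\mathbb{H}$ the limit value $b/d$ must be finite, forcing $d\neq0$, and then $f'=mz^{m-1}/(cz^{m}+d)^{2}$ gives
\[
\frac{f''}{f'}=\frac{m-1}{z}-\frac{2cmz^{m-1}}{cz^{m}+d},
\]
a simple pole of residue $m-1$, as the second term is holomorphic near $z=0$.

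The routine parts are the chain-rule identity and the two residue computations. The step I expect to be the main obstacle is the monodromy bookkeeping in (2): verifying carefully that membership of the local monodromy in $L$ rules out elliptic behavior—hence all non-integer cone angles—and that the parabolic monodromy at a cusp is compatible with $L$ only when its unique fixed point is $\infty$, which is what pins $c=0$ in the cusp normal form. These compatibility checks, rather than the final differentiations, are where the hypothesis that the monodromy lies in $L$ is genuinely used.
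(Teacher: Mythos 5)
Your proposal is correct, and it follows the same skeleton as the paper's proof: branch-invariance of $f''/f'$ under the affine action of $L$, the chain-rule computation for the cocycle identity, and a case analysis at the singularities using the normal forms of \cite[Lemma 2.4]{FSX19}, ending with the same residue computations. The genuine difference is in how the hypothesis that the monodromy lies in $L$ is exploited. The paper writes the analytically continued branch as $s\mathfrak{f}+t$ and solves the resulting coefficient systems explicitly (once to exclude non-integer cone angles, once to pin down the cusp form), whereas you use conjugation invariants: at a cone point of angle $2\pi\alpha$ with $\alpha\notin\mathbb{Z}$ the local monodromy is conjugate in ${\rm PSL}(2,\mathbb{C})$ to $w\mapsto e^{2\pi\sqrt{-1}\alpha}w$, hence elliptic with trace of absolute value $|2\cos\pi\alpha|<2$, while every element of $L$ has trace $a+a^{-1}\geq 2$; and at a cusp the local monodromy is parabolic with unique fixed point $A(\infty)=a/c$, while every element of $L$ fixes $\infty$, which forces $c=0$. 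This is shorter and more conceptual than the paper's brute-force matching, and it isolates exactly where membership in $L$ enters. Two minor points of comparison. First, your justification that $d\neq 0$ at an integer cone point (``the limit value $b/d$ must be finite'') is circular as phrased; argue instead, as the paper implicitly does via openness, that if $d=0$ then $\mathfrak{f}=\frac{a}{c}-\frac{1}{c^{2}z^{m}}$ maps a punctured neighborhood onto a neighborhood of $\infty$ in $\overline{\mathbb{C}}$, which cannot be contained in $\mathbb{H}$. Second, the paper's explicit solution at a cusp yields the sharper normal form $\mathfrak{f}=-\sqrt{-1}\,r\log z+s$ with $r>0$ and $s\in\mathbb{R}$, which is not needed for the present lemma but is quoted verbatim in the later proofs of Theorem \ref{thm:no L on C^{*}} and Lemma \ref{lem:L_0}; your route gives only $\mathfrak{f}=\kappa\log z+\mathrm{const}$ with $\kappa\neq 0$, so those later arguments would require the small additional step of determining $\kappa$ from the condition that $\mathfrak{f}$ takes values in $\mathbb{H}$.
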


\begin{proof}
\begin{enumerate}\item
We choose a point $p$ in $\big(U_\alpha\cap U_\beta\big)\backslash {\rm supp\, D}$ and
take a function element $\mathfrak{f}$ of $f$ in a neighborhood of $p$ in $U_\alpha\cap U_\beta$, where $f''/f'$ is a single-valued function since $f$ has monodromy in $L$. Since
$
\mathfrak{f}'(z_{\beta}(p))=\mathfrak{f}'\big(z_{\alpha}(p)\big)
\psi_{\alpha\beta}'\big(z_{\beta}(p)\big)
$ and
$
\mathfrak{f}''\big(z_{\beta}(p)\big)=
\mathfrak{f}'\big(z_{\alpha}(p)\big)\psi_{\alpha\beta}''\big(z_{\beta}(p)\big)
+\mathfrak{f}''\big(z_{\alpha}(p)\big)\Big(\psi_{\alpha\beta}'\big(z_{\beta}(p)\big)\Big)^{2},
$
we have
\[
 \begin{aligned}
h_{\beta}\big(z_{\beta}(p)\big)&
=\frac{\mathfrak{f}''\big(z_{\beta}(p)\big)}
{\mathfrak{f}'\big(z_{\beta}(p)\big)}
=\frac{\psi_{\alpha\beta}''\big(z_{\beta}(p)\big)}{\psi_{\alpha\beta}'\big(z_{\beta}(p)\big)}
+\frac{\mathfrak{f}''\big(z_{\alpha}(p)\big)}{\mathfrak{f}'\big(z_{\alpha}(p)\big)}
\psi_{\alpha\beta}'\big(z_{\beta}(p)\big)\\
&=\theta_{1}\Big(\psi_{\alpha\beta}\big(z_{\beta}(p)\big)\Big)+
h_{\alpha}\big(z_{\alpha}(p)\big)\left(\frac{{\rm d}z_{\alpha}}{{\rm d}z_{\beta}}\right)(p).
\end{aligned}
\]
Hence, $f''/f'$ defines an affine connection on $\Sigma\backslash {\rm supp\, D}$. We postpone to (2) the proof that it extends to $\Sigma$.

\item
Suppose that\ $\mathrm{d} s^2$ has a cone singularity at $p$ with angle $0<2\pi\alpha\notin 2\pi\,{\Bbb Z}$. By \rm{\cite[Lemma 2.4]{FSX19}}, we can choose a function element $\mathfrak{f}$ near $p$ and a complex coordinate $z$ near $p$ such that $\mathfrak{f}=\frac{az^{\alpha}+b}{cz^{\alpha}+d}$ with $ad-bc=1$. Since $f$ has  monodromy in $L$, there exist $s>0, t\in\mathbb{R}$ such that
$ s\mathfrak{f}+t= s\frac{az^{\alpha}+b}{cz^{\alpha}+d}+t = \frac{a e^{2\pi\sqrt{-1}\alpha}z^{\alpha}+b}{ce^{2\pi\sqrt{-1}\alpha} z^{\alpha}+d}$, which is equivalent to the system:
$\left\{\  \begin{aligned} c(a-sa-tc)e^{2\pi\sqrt{-1}\alpha} &=0, \\
  ade^{2\pi\sqrt{-1}\alpha}+bc &=(sad+tcd)+e^{2\pi\sqrt{-1}\alpha}(sbc+tcd), \\
  (b-sb-td)d &=0. \end{aligned} \right.$
If $c=0$, then $ad=1$. Solving the system, we obtain $e^{2\pi\sqrt{-1}\alpha}=s>0$, $s=1$ and $\alpha\in {\Bbb Z}$. Contradiction!
If $c\neq0$, then, by solving the system, we have $a-sa=tc$ and $se^{2\pi\sqrt{-1}\alpha}=1$.
Since $s>0$, we obtain $s=1, t\neq 0$ and $c=d=0$. Contradiction!

Suppose that $p\in {\rm supp\, D}$ is a cusp singularity of $\mathrm{d} s^2$. By \rm{\cite[Lemma 2.4]{FSX19}}, we can choose a function element $\mathfrak{f}$ near $p$ and a complex coordinate $z$ near $p$ such that $\mathfrak{f}=\frac{a\log z+b}{c\log z+d}$ with $ad-bc=1$. Since the monodromy of $f$ belongs to $L$, then there exists $s>0, t\in\mathbb{R}$ such that
$ s{\mathfrak{f}}+t=s{\frac{a\log z+b}{c\log z+d}}+t= \frac{a(\log z+2\pi\sqrt{-1})+b}{c(\log z+2\pi\sqrt{-1})+d}$, which is equivalent to
the system:
$\left\{\  \begin{aligned} c(a-sa-tc) &=0, \\
  ad+ac2\pi\sqrt{-1}+bc &=c(sb+td)+(c2\pi\sqrt{-1}+d)(sa+tc), \\
  ad2\pi\sqrt{-1}+bd &=(c2\pi\sqrt{-1}+d)(sb+td). \end{aligned} \right.$
It is easy to check that the system has no solution if $c\not=0$. Hence we have $c=0$, $ad=1$ and $\mathfrak{f}=a^{2}\log z+ab$. Since $\Re\, \log z<0$ as $|z|<<1$ and ${\frak f}$ takes values in ${\Bbb H}$, we obtain $a^{2}=-\sqrt{-1}r$ for some $r>0$. Hence,
$h_{\alpha}=\frac{\mathfrak{f}''}{\mathfrak{f}'}=-\frac{1}{z}$ in a neighborhood $u_{\alpha}$ of $p$ and ${\rm Res}(h,\, p)=-1$.

Suppose that\ $\mathrm{d} s^2$ has a cone singularity at $p$ with angle $2\pi m\in 2\pi\,\mathbb{Z}_{>1}$. Then we can choose a function element $\mathfrak{f}$ near $p$ and a complex coordinate $z$ near $p$ such that $\mathfrak{f}=\frac{az^{m}+b}{cz^{m}+d}$ with $ad-bc=1$. Since ${\frak f}$ is an open map, we have $\lim_{z\rightarrow p}{\frak f}(z)=\frac{b}{d}\in\mathbb{H}$ and $bd\not=0$.  Hence, we obtain $\mathfrak{f}'(z)=\frac{mz^{m-1}}{(cz^{m}+d)^{2}}$, $\frac{\mathfrak{f}''}{\mathfrak{f}'}=\frac{m-1}{z}-\frac{2cmz^{m-1}}{cz^{m}+d}(d\neq0)$, and ${\rm Res}(h,p)=m-1$.

Therefore,
$
h=\{h_{\alpha}=\frac{f''}{f'}:U_{\alpha}\rightarrow\overline{\mathbb{C}}\}
$
extends to  an affine connection on $\Sigma$.

\end{enumerate}
\end{proof}

\begin{cor}
\label{Cor:2}
{\it There exists no $L$-metric on a compact Riemann surface $X$.}
\end{cor}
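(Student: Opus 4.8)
The plan is to argue by contradiction, extracting from the hypothetical $L$-metric an affine connection on $X$ and then playing its total residue, which is constrained topologically, against the degree of the branching divisor, which is constrained by negative curvature. So suppose $\mathrm{d} s^2$ is an $L$-metric representing a divisor $\textup{D}$ on a compact Riemann surface $X$, with developing map $f$ whose monodromy lies in $L$. Since $X$ is compact and ${\rm supp\, D}$ is a discrete closed subset, it is finite, so $\textup{D}=\sum_{i=1}^n(\theta_i-1)p_i$ with each $\theta_i\geq 0$. By Lemma \ref{lem:L} the quotient $h=\{h_\alpha=f''/f'\}$ is an affine connection on all of $X$; at a smooth point $f$ is locally univalent, so $f'\neq 0$ and $h$ is holomorphic there, whence the only poles of $h$ sit at the points of ${\rm supp\, D}$.

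Next I would read off the residues from part (2) of Lemma \ref{lem:L}. A cusp singularity occurs exactly when $\theta_i=0$, and there ${\rm Res}(h,p_i)=-1=\theta_i-1$; a cone singularity has angle $2\pi m\in 2\pi\,\mathbb{Z}_{>1}$, i.e. $\theta_i=m$, and there ${\rm Res}(h,p_i)=m-1=\theta_i-1$. Thus in every case ${\rm Res}(h,p_i)=\theta_i-1$, and summing over the finitely many singularities gives
\[
{\rm Res}(h)=\sum_{i=1}^n(\theta_i-1).
\]
On the other hand, Lemma \ref{lem:affine} evaluates the total residue of any affine connection on a compact Riemann surface as ${\rm Res}(h)=-\chi(X)$. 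Equating the two expressions yields the rigid identity
\[
\chi(X)+\sum_{i=1}^n(\theta_i-1)=0.
\]

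Finally I would confront this identity with the necessary condition for the existence of a singular hyperbolic metric. Since $\mathrm{d} s^2$ is a hyperbolic metric representing $\textup{D}$, the Gauss--Bonnet formula recalled in the introduction forces $\chi(X)+\sum_{i=1}^n(\theta_i-1)<0$, which contradicts the equality above; hence no $L$-metric can exist on $X$. The mechanics here are light once Lemmas \ref{lem:L} and \ref{lem:affine} are in hand, so the main point is conceptual rather than computational: the residue theorem for affine connections pins the total residue to $-\chi(X)$ \emph{exactly}, whereas negative curvature only tolerates the same quantity \emph{strictly below} $-\chi(X)$, and this gap between an equality and a strict inequality is precisely what rules out the $L$-metric. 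The one place demanding care is verifying that $h$ acquires no unexpected poles away from ${\rm supp\, D}$ and that the cusp and cone residues align uniformly as $\theta_i-1$, which is exactly what local univalence at smooth points and the two clauses of Lemma \ref{lem:L}(2) guarantee.
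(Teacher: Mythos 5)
Your proposal is correct and follows essentially the same route as the paper: apply Lemma \ref{lem:L} to obtain the affine connection $h=f''/f'$ with residue $\theta_i-1$ at each singularity, invoke Lemma \ref{lem:affine} to force ${\rm Res}(h)=-\chi(X)$, and contradict the Gauss--Bonnet necessary condition $\chi(X)+\sum_i(\theta_i-1)<0$ from Theorem \ref{thm:rat}. Your unified bookkeeping of both cusp and cone residues as $\theta_i-1$ is a slightly cleaner presentation of the same computation the paper carries out by splitting the divisor into cone and cusp parts.
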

\begin{proof}
Suppose that $\mathrm{d} s^2$ is an $L$-metric representing an ${\Bbb R}$-divisor ${\rm D}$ on $X$. Then, by Lemma \ref{lem:L},
it has either cusp singularities or cone singularities with angles lying in $2\pi\,{\Bbb Z}_{>1}$.  Hence, the divisor D has form $\textup{D}=\sum_{i=1}^j(m_{i}-1)p_{i}+\sum\limits_{i=j+1}^n(-1)p_{i},\ \text{where}\, m_i\in  \mathbb{Z}_{>1}$. By Lemma \ref{lem:L}, we have ${\rm Res}(h,p_{i})=m_{i}-1$ as $1\leq i\leq j$, and ${\rm Res}(h,p_{i})=-1$ as $i>j$. By Lemma \ref{lem:affine}, $-\chi(X)=\sum_{i=1}^j(m_{i}-1)-(n-j)$, which contradicts  Theorem \ref{thm:rat}.

\end{proof}

\subsection{Non-existence of $L$-metrics on $\mathbb{C}$, $\mathbb{C}\backslash \{0\}$ or a punctured torus}
~\\
To prove this, we need the following.

\begin{theorem}\label{thm:UL}
{\rm (\cite[p.298]{UL01})}
Let $\Sigma$ be a Riemann surface such that none of its abelian covers is hyperbolic.
Then $\Sigma$ is the Riemann sphere, ${\Bbb C}$, ${\Bbb C}\backslash \{0\}$, a torus or a punctured torus.
\end{theorem}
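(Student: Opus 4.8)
The plan is first to reduce the hypothesis to a statement about the single surface $\Sigma^{\mathrm{Ab}}$, the maximal abelian covering of $\Sigma$. Every abelian cover corresponds to a normal subgroup of $\pi_1(\Sigma)$ containing the commutator subgroup $[\pi_1(\Sigma),\pi_1(\Sigma)]$, and is therefore covered by $\Sigma^{\mathrm{Ab}}$. Since a holomorphic covering map pulls back a negative non-constant subharmonic function to one of the same kind, a hyperbolic Riemann surface has only hyperbolic covering surfaces; in particular, if some abelian cover of $\Sigma$ were hyperbolic, then $\Sigma^{\mathrm{Ab}}$, lying above it, would be hyperbolic as well. Hence the hypothesis ``no abelian cover of $\Sigma$ is hyperbolic'' is equivalent to the single statement ``$\Sigma^{\mathrm{Ab}}$ is non-hyperbolic'', and the task becomes to classify the Riemann surfaces whose maximal abelian covering is parabolic or compact.

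Next I would apply uniformization to split into cases according to the universal cover $\widetilde\Sigma\in\{\widehat{\mathbb{C}},\mathbb{C},\mathbb{D}\}$. If $\widetilde\Sigma=\widehat{\mathbb{C}}$ then $\Sigma$ is the Riemann sphere. If $\pi_1(\Sigma)$ is abelian then $\Sigma^{\mathrm{Ab}}=\widetilde\Sigma$, so non-hyperbolicity of $\Sigma^{\mathrm{Ab}}$ forces $\widetilde\Sigma\in\{\widehat{\mathbb{C}},\mathbb{C}\}$, and the surfaces with abelian fundamental group and parabolic (or compact) universal cover are exactly $\mathbb{C}$, $\mathbb{C}\setminus\{0\}$ and the tori. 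This accounts for every surface on the list except the punctured torus and reduces matters to the case $\widetilde\Sigma=\mathbb{D}$ with $\pi_1(\Sigma)$ non-abelian, where $\Sigma^{\mathrm{Ab}}$ is an infinite proper cover.

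In the remaining case I would argue by the topology of $\Sigma$, through its genus $g$ and number of ends $e$. If $\Sigma$ is compact then $g\ge 2$ and $\Sigma^{\mathrm{Ab}}$ is the cocompact $\mathbb{Z}^{2g}$-cover of $\Sigma$; as $\mathbb{Z}^{2g}$ is a transient group for $2g\ge 4$, this cover is transient, hence hyperbolic, and $\Sigma$ is excluded. If $\Sigma$ has exactly one end, the loop around it is null-homologous, so it remains a puncture upstairs and $\Sigma^{\mathrm{Ab}}$ is the $\mathbb{Z}^{2g}$-cover of the closed genus-$g$ surface with a discrete (hence polar) set of points removed; this is parabolic exactly when $2g\le 2$, i.e. $g=1$, giving the punctured torus with $\Sigma^{\mathrm{Ab}}=\mathbb{C}\setminus\mathbb{Z}^2$, which is parabolic because deleting the polar lattice $\mathbb{Z}^2$ from the parabolic surface $\mathbb{C}$ preserves parabolicity. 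Finally, every surface with $g\ge 2$ or with $e\ge 2$ should be shown to have hyperbolic $\Sigma^{\mathrm{Ab}}$: a homologically non-trivial end carries a loop of infinite order in the deck group $H_1(\Sigma)$, so it unwinds into half-plane ends in the cover, and the non-abelianness (which makes $H_1(\Sigma)$ of rank at least two) produces infinitely many of them.

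The hard part will be exactly this last step: the quantitative potential theory deciding parabolicity versus hyperbolicity of $\Sigma^{\mathrm{Ab}}$ on the borderline. The naive principle ``the cover is recurrent if and only if its deck group $H_1(\Sigma)$ is recurrent'' holds for a compact base but fails once $\Sigma$ has ends. For instance, the punctured torus and the thrice-punctured sphere both have deck group $\mathbb{Z}^2$, yet the former has parabolic $\Sigma^{\mathrm{Ab}}$ while the latter is hyperbolic; and $\mathbb{C}\setminus\{0\}$ has deck group $\mathbb{Z}$ with parabolic cover $\mathbb{C}$ even though its ends unwind. Distinguishing these cases requires controlling how the unwound half-plane ends contribute transient mass to the cover, and a uniform treatment must also dispatch the infinite-type surfaces (infinite genus, or infinitely many ends) by exhibiting a hyperbolic finite-rank intermediate abelian cover. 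This is the technical core of the statement, and I expect the Lyons--Sullivan circle of ideas underlying Lemma~\ref{lem:Sullivan}, combined with recurrence/transience estimates for the random walk induced on $H_1(\Sigma)$, to be the decisive tool.
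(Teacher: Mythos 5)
Your reduction of the hypothesis to the single condition that $\Sigma^{\mathrm{Ab}}$ be non-hyperbolic is correct, and so are the cases you actually settle: abelian $\pi_1(\Sigma)$ (giving $\widehat{\mathbb{C}}$, $\mathbb{C}$, $\mathbb{C}\setminus\{0\}$ and tori), compact $\Sigma$ of genus $g\ge 2$, and one-ended $\Sigma$ of finite genus (giving the punctured torus). But there is a genuine gap, and it is the one you flag yourself: nothing is proved for a non-compact surface with non-abelian fundamental group and at least two ends, nor for surfaces of infinite type. Your sketch for that case --- a homologically non-trivial end ``unwinds into half-plane ends'' and non-abelianness ``produces infinitely many of them'' --- is not a hyperbolicity criterion: having infinitely many ends proves nothing ($\mathbb{C}\setminus\mathbb{Z}^2$ is parabolic, as your own punctured-torus step uses), and even an unwound end proves nothing, since the $\mathbb{Z}$-cover of $\mathbb{C}\setminus\{0,1\}$ unwinding the loop around $0$ is $\mathbb{C}\setminus 2\pi\sqrt{-1}\,\mathbb{Z}$, which is parabolic despite the unwinding. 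So what must be shown --- that the unwound ends of the \emph{full} abelian cover carry positive harmonic measure --- is exactly what is missing, and deferring to ``the Lyons--Sullivan circle of ideas'' is an expectation, not an argument.

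To gauge the size of the hole: the smallest surface it leaves open is $\mathbb{C}\setminus\{0,1\}$, which is not on the list, so the theorem forces some abelian cover of it to be hyperbolic; establishing this is precisely the McKean--Sullivan theorem \cite{MS84} on the class surface of the thrice-punctured sphere, a separate research paper, and it is the very subtlety the present manuscript emphasizes in its final section (it is why the authors' $L$-metric argument cannot be run on $\mathbb{C}\setminus\{0,1\}$). Your two correct counterexamples (punctured torus versus thrice-punctured sphere, both with deck group $\mathbb{Z}^2$) show that no recurrence criterion formulated purely in terms of the deck group $H_1(\Sigma)$ can close the gap; the decisive input has to mix the geometry of the base with the cover, which is what \cite{MS84} and Uluda\u{g}'s argument provide. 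Note finally that the paper itself offers no proof to compare against --- the statement is quoted from \cite[p.298]{UL01} --- so your proposal should be judged as a stand-alone proof, and as such it settles the easy cases but leaves the technical core unproved.
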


\begin{theorem}
\label{thm:no L on C^{*}}
There exists no $L$-metric on $\mathbb{C}$, $\mathbb{C}\backslash \{0\}$ or a punctured torus.
\end{theorem}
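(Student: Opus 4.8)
The plan is to descend to an abelian covering, on which the monodromy of the developing map is forced into the translation subgroup $L_{0}$, and then to read off a negative non‑constant subharmonic function. Concretely, let $\mathrm{d}s^2$ be an $L$-metric on $\Sigma$, one of $\mathbb{C}$, $\mathbb{C}\backslash\{0\}$ or the punctured torus, with developing map $f\colon\Sigma\backslash\mathrm{supp}\,\mathrm{D}\to\mathbb{H}$ of monodromy in $L$. Passing to the maximal abelian covering $\Sigma^{\rm Ab}\stackrel{p}{\longrightarrow}\Sigma$, I will show that the pulled–back developing map $\widetilde f:=f\circ p$ has monodromy in $L_{0}$, produce from it a negative non‑constant subharmonic function on $\Sigma^{\rm Ab}$, and so conclude that $\Sigma^{\rm Ab}$ is hyperbolic — contradicting the fact that every abelian covering of these three surfaces is non‑hyperbolic.

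First I would use Lemma \ref{lem:L} to see that $\mathrm{d}s^2$ has only cusps and cone points of integer angle $2\pi m\in 2\pi\mathbb{Z}_{>1}$, and inspect the local monodromy. For a cone point, the normal form $\mathfrak{f}=\frac{az^{m}+b}{cz^{m}+d}$ is single‑valued in $z$, so the local monodromy is trivial; for a cusp, $\mathfrak{f}=a^{2}\log z+ab$ with $a^{2}=-\sqrt{-1}\,r$, $r>0$, so encircling the cusp sends $\mathfrak{f}\mapsto\mathfrak{f}+2\pi r$, a real translation lying in $L_{0}$. Hence every meridian loop maps into $L_{0}$, so the composite $\rho\colon\pi_{1}(\Sigma\backslash\mathrm{supp}\,\mathrm{D})\to L\to L/L_{0}\cong\mathbb{R}_{>0}$ of the monodromy with the (abelian) quotient map kills all meridians and therefore factors through $H_{1}(\Sigma)$. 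Any loop in $\Sigma^{\rm Ab}\backslash p^{-1}(\mathrm{supp}\,\mathrm{D})$ projects to a loop whose class in $H_{1}(\Sigma)$ vanishes, so its $\rho$-value is trivial; that is, $\widetilde f$ has monodromy inside $L_{0}$, and its branches differ only by real translations.

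Since $L_{0}$ acts on $\mathbb{H}$ by $w\mapsto w+t$, the function $\widetilde v:=\Im\,\widetilde f$ is single‑valued, positive and harmonic on $\Sigma^{\rm Ab}\backslash p^{-1}(\mathrm{supp}\,\mathrm{D})$. Across a cone point $\widetilde f$ extends holomorphically with value in $\mathbb{H}$, so $\widetilde v$ extends harmonically and remains positive; across a cusp the local form gives $\widetilde v=-r\log|z|+(\text{harmonic})$, whence $-\widetilde v=r\log|z|+(\text{harmonic})$ extends subharmonically because $r>0$ and $\log|z|$ is subharmonic. By locality of subharmonicity, $-\widetilde v$ thus extends to a subharmonic function on all of $\Sigma^{\rm Ab}$; it is negative because $\widetilde v>0$, and it is non‑constant, for otherwise $\widetilde f$ would have constant imaginary part and hence be constant, contradicting local univalence. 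By Definition \ref{defi:classification}, $\Sigma^{\rm Ab}$ is hyperbolic.

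It remains to contradict this. For $\Sigma=\mathbb{C}$ one has $\Sigma^{\rm Ab}=\mathbb{C}$, and for $\Sigma=\mathbb{C}\backslash\{0\}$ the maximal abelian covering is the universal covering $\exp\colon\mathbb{C}\to\mathbb{C}\backslash\{0\}$, so again $\Sigma^{\rm Ab}=\mathbb{C}$; in both cases $\Sigma^{\rm Ab}$ is parabolic, the desired contradiction. For the punctured torus, whose maximal abelian covering is a $\mathbb{Z}^{2}$-covering, I would invoke the classification underlying Theorem \ref{thm:UL} (see \cite{UL01}): each of $\mathbb{C}$, $\mathbb{C}\backslash\{0\}$ and the punctured torus has the property that all of its abelian coverings are non‑hyperbolic. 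I expect the punctured‑torus case to be the main obstacle: unlike the first two surfaces, its abelian covering cannot simply be identified with $\mathbb{C}$, so establishing non‑hyperbolicity of its $\mathbb{Z}^{2}$-cover is exactly the nonelementary step and is where the converse direction of the cited characterization is needed; the bookkeeping required to extend $-\widetilde v$ subharmonically across the (possibly infinitely many) cusps is a secondary technical point, dispatched by the local nature of subharmonicity.
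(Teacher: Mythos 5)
Your proof is correct and follows essentially the same route as the paper: the paper likewise extends $-\Im f$ subharmonically across the cusps and integer-angle cone points, pulls it back to the maximal abelian cover $\Sigma^{\rm Ab}$ (where the scaling part of the $L$-monodromy dies, which is exactly your factorization through $H_1(\Sigma)$), and concludes that $\Sigma^{\rm Ab}$ is hyperbolic, contradicting Theorem \ref{thm:UL}. Your refinements---making the $L_0$-monodromy of $f\circ p$ explicit, and noting that the quoted form of Theorem \ref{thm:UL} is really used in its converse direction (which you bypass for $\mathbb{C}$ and $\mathbb{C}\backslash\{0\}$ by identifying their maximal abelian covers with $\mathbb{C}$)---are presentational sharpenings rather than a different method.
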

\begin{proof}

Let $\mathrm{d} s^2$ be an $L$-metric on a Riemann surface $\Sigma$ and  $f:\Sigma\backslash {\rm supp\, D}\to \mathbb{H}$ a developing map of it with monodromy  in $L$. By Lemma \ref{lem:L}, the singularity of\ $\mathrm{d} s^2$ is either a cusp singularity or a cone one with angle $2\pi m\in\mathbb{Z}_{>1}$.
By the proof of Lemma \ref{lem:L}, if $p\in {\rm supp\, D}$ is a cusp singularity of $\mathrm{d} s^2$, then we can choose a function element $\mathfrak{f}$ near $p$ and a complex coordinate $z$ near $p$ such that $\mathfrak{f}=-\sqrt{-1}r\log z+s$, where $r>0$ and $s\in\mathbb{R}$. Since $-\Im\,\mathfrak{f}=r\log|z|$ extends to a negative subharmonic function in a neighborhood of $p$, $-\Im\, f$ forms a multi-valued negative subharmonic function on $\Sigma$ with monodromy in $H_{2}$. Taking the maximal abelian covering ${\Sigma}^{\rm Ab}\stackrel{\pi}{\longrightarrow}\Sigma$, we obtain  a negative non-constant subharmonic function $(-\Im\,f)\circ \pi$  on ${\Sigma}^{\rm Ab}$, which implies that ${\Sigma}^{\rm Ab}$ is a hyperbolic Riemann surface. By Theorem \ref{thm:UL}, the Riemann surface $\Sigma$ is not isomorphic to  $\overline{\mathbb{C}}$, $\mathbb{C}$, $\mathbb{C}\backslash \{0\}$, a torus, or a punctured torus.

\end{proof}

\subsection{$L_0$-metric}
~\\
In this subsection, we obtain a meromorphic one-form satisfying some geometric properties from an $L_0$ metric on a Riemann surface (Lemma \ref{lem:L_0}), by which we show the non-existence of $L_0$-metrics on a non-hyperbolic Riemann surface (Theorem \ref{thm:no L_{0}}). To this end, we need a preliminary lemma as follows.

\begin{lemma}{\rm (\cite[Theorem 3.6.1]{TR95})}
\label{thm:TR95} Let $U$ be an open subset of $\mathbb{C}$, $E$ a closed polar set, and $u$  a subharmonic function on $U\setminus E$. Suppose that each point of $U\setminus E$ has a neighbourhood $N$ such that $u$ is bounded above on $N\backslash E$. Then $u$ extends uniquely to a subharmonic function $U$.
\end{lemma}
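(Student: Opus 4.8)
The plan is to prove the statement in two stages: uniqueness, which is elementary, and existence, which carries the real content.

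For uniqueness I would exploit that a closed polar set $E$ is negligible for area: every polar set has two-dimensional Lebesgue measure zero. If $v_1$ and $v_2$ are subharmonic on $U$ and both restrict to $u$ on $U\setminus E$, then $v_1=v_2$ almost everywhere. Since a subharmonic function is recovered from its solid averages through $v(z)=\lim_{r\downarrow 0}\frac{1}{\pi r^{2}}\int_{D(z,r)}v\,\mathrm{d}A$, and these averages depend only on values up to a null set, the two functions have identical averages at every point and therefore coincide.

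For existence I would first reduce to a local problem. Subharmonicity is a local condition, and by the uniqueness already established any two extensions across $E$ agree on overlaps; so it suffices to extend $u$ across $E\cap D$ for a small disc $D$ with $\overline{D}\subset U$. On such a disc the definition of polar set supplies a subharmonic function $\psi\le 0$, not identically $-\infty$, with $E\cap D\subseteq\{\psi=-\infty\}$. The crucial remark is that, being subharmonic, $\psi$ is upper semicontinuous, so $\limsup_{w\to z}\psi(w)\le\psi(z)=-\infty$ forces $\psi(w)\to-\infty$ as $w$ approaches any point of $E$. Hence, for every $\varepsilon>0$, the function $u+\varepsilon\psi$, subharmonic on $D\setminus E$, tends to $-\infty$ at each point of $E$ because $u$ is locally bounded above; assigning it the value $-\infty$ on $E$ yields an upper semicontinuous function $u_\varepsilon$ on $D$ that satisfies the sub-mean-value inequality (trivially at points of $E$, where its value is $-\infty$, and by subharmonicity elsewhere). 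Thus each $u_\varepsilon$ is subharmonic on all of $D$.

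Finally I would let $\varepsilon\downarrow 0$. Since $\psi\le 0$, the functions $u_\varepsilon=u+\varepsilon\psi$ increase to $u$ on $D\setminus\{\psi=-\infty\}$, so their pointwise supremum equals $u$ off the polar set $\{\psi=-\infty\}$ and equals $-\infty$ on it. By the Brelot--Cartan theorem the upper semicontinuous regularization $\big(\sup_{\varepsilon>0}u_\varepsilon\big)^{*}$ is subharmonic on $D$, and since such a regularization changes a subharmonic candidate only on a polar set, it coincides with $u$ throughout $D\setminus E$; this regularized function is the sought extension. The step I expect to be the main obstacle is exactly this last one: one must check that passing to the upper semicontinuous envelope does not corrupt the values of $u$ on $D\setminus E$, which relies on the fine-topology fact that a subharmonic function is determined by its values off an arbitrary polar set. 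The rest is bookkeeping around the single device of forcing $u$ down to $-\infty$ on $E$ by the auxiliary potential $\varepsilon\psi$.
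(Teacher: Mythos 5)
The paper contains no proof of this lemma at all: it is imported verbatim from Ransford's book (\cite[Theorem 3.6.1]{TR95}) and used as a black box in the proof of Theorem \ref{thm:no L_{0}}, so there is no in-paper argument to compare yours against; what can be judged is correctness, and your proposal is correct --- it is essentially the standard potential-theoretic proof. The uniqueness step is sound: a polar set is Lebesgue-null, and a subharmonic function is recovered pointwise from its solid means, so two subharmonic extensions agreeing off $E$ agree everywhere (this also justifies your local-to-global gluing). The existence step is the classical device: on a small disc $D$ take $\psi\le 0$ subharmonic with $E\cap D\subseteq\{\psi=-\infty\}$; upper semicontinuity forces $\psi\to-\infty$ at points of $E$, so $u+\varepsilon\psi$, extended by $-\infty$ on $E$, is upper semicontinuous on $D$ and satisfies the submean inequality (trivially on $E$, and on small circles avoiding the closed set $E$ elsewhere), hence is subharmonic; letting $\varepsilon\downarrow 0$ and taking the upper semicontinuous regularization of the (increasing, locally uniformly bounded above, since $u_\varepsilon\le u$) family gives a subharmonic function on $D$ which agrees with $u$ off a polar subset of $D\setminus E$, hence everywhere on $D\setminus E$ by the same a.e.-determination fact --- you correctly identified this last identification as the only delicate point and closed it. Two minor remarks: first, the boundedness hypothesis as transcribed in the paper (``each point of $U\setminus E$'') is vacuous, since upper semicontinuity provides it for free; it is evidently a typo for ``each point of $U$'', and your argument rightly uses boundedness of $u$ above near points of $E$, which is the hypothesis genuinely needed (without it the statement fails, e.g.\ $u(z)=-\log|z|$, $E=\{0\}$). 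Second, strictly speaking the definition of polarity gives $\psi$ only on a neighbourhood and without a sign; one shrinks $D$ and subtracts a constant to arrange $\psi\le 0$, a cosmetic step worth a sentence in a written-out proof.
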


\begin{lemma}
\label{lem:L_0}
 Let\ $\mathrm{d} s^2$\ be an $L_0$-metric on a Riemann surface\ $\Sigma$, representing the divisor $\textup{D}=\sum_{i=1}^\infty(\theta_{i}-1)p_{i}$, and $f:\Sigma\backslash {\rm supp\, D}\to \mathbb{H}$ a developing map of\ $\mathrm{d} s^2$ with monodromy in $L_{0}$. Then
 $\omega:={\rm d}f$
 is a holomorphic 1-form on $\Sigma\backslash {\rm supp\, D}$ and extends to a meromorphic one-form on $\Sigma$, which we call the {\rm character one-form} of $\mathrm{d} s^2$. Moreover, we have the following.

 \begin{enumerate}

\item If\ $\mathrm{d} s^2$ has a cusp singularity at $p\in {\rm supp\, D}$, then $p$ is a simple pole of $\omega$ with residue $-\sqrt{-1}r$, where $r>0$.

\item If\ $\mathrm{d} s^2$ has a cone singularity at $p\in {\rm supp\, D}$ with angle $2\pi m\in 2\pi\,\mathbb{Z}_{>1}$, then $p$ is a zero of $\omega$ with order $m-1$.

\end{enumerate}

\end{lemma}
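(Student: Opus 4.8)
The plan is to run the same argument used for general $L$-metrics in Lemma~\ref{lem:L}, specialized to the parabolic translation subgroup $L_0$. First I would establish that $\omega=\mathrm{d}f$ is single-valued on $\Sigma\backslash{\rm supp\,D}$. Since every element of $L_0$ acts on $\mathbb{H}$ by a real translation $w\mapsto w+t$, the analytic continuation of any function element $\mathfrak{f}$ of $f$ along a loop in $\Sigma\backslash{\rm supp\,D}$ replaces $\mathfrak{f}$ by $\mathfrak{f}+t$ for some $t\in\mathbb{R}$. Hence $\mathrm{d}\mathfrak{f}$ is invariant under the monodromy, so $\omega:=\mathrm{d}f$ is a well-defined holomorphic one-form on $\Sigma\backslash{\rm supp\,D}$. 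It then remains to compute the local behaviour of $\omega$ at each point of ${\rm supp\,D}$ and to conclude that $\omega$ extends meromorphically across these points.

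For the classification of singularities I would simply invoke Lemma~\ref{lem:L}: as $L_0\subset L$, the metric $\mathrm{d}s^2$ is in particular an $L$-metric, so by Lemma~\ref{lem:L}(2) each of its singularities is either a cusp or a cone singularity with angle $2\pi m\in2\pi\,\mathbb{Z}_{>1}$. Thus it suffices to analyze these two cases with the local normal forms from \cite[Lemma~2.4]{FSX19}, exactly as in the proof of Lemma~\ref{lem:L}. \emph{Cusp case.} At a cusp singularity $p$, that proof already produces a complex coordinate $z$ with $z(p)=0$ and a function element $\mathfrak{f}=-\sqrt{-1}\,r\log z+s$ with $r>0$. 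Differentiating gives $\omega=\frac{-\sqrt{-1}\,r}{z}\,\mathrm{d}z$, so $p$ is a simple pole of $\omega$ with residue $-\sqrt{-1}\,r$, which is (1). As a consistency check, the corresponding local monodromy $w\mapsto w+2\pi r$ indeed lies in $L_0$.

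\emph{Cone case.} At a cone singularity of angle $2\pi m$ with $m\in\mathbb{Z}_{>1}$ the same proof gives $\mathfrak{f}=\frac{az^{m}+b}{cz^{m}+d}$ with $ad-bc=1$, and since $f$ takes values in $\mathbb{H}$ one has $bd\neq0$, in particular $d\neq0$. A direct computation yields $\mathfrak{f}'=\frac{mz^{m-1}}{(cz^{m}+d)^{2}}$, hence $\omega=\frac{mz^{m-1}}{(cz^{m}+d)^{2}}\,\mathrm{d}z$; since $d\neq0$ the denominator does not vanish at $z=0$, so $p$ is a zero of $\omega$ of order $m-1$, which is (2). In both cases the local expression is meromorphic at $p$, so $\omega$ extends from $\Sigma\backslash{\rm supp\,D}$ to a meromorphic one-form on all of $\Sigma$, as required.

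I do not expect a serious obstacle here: the only genuine input is the classification of admissible singularity types, which is inherited from the $L$-metric case through the inclusion $L_0\subset L$, after which the proof reduces to reading off residues and vanishing orders from the explicit normal forms. Note that the subharmonic extension result Lemma~\ref{thm:TR95} is \emph{not} needed for this lemma; it is reserved for the subsequent non-existence theorem (Theorem~\ref{thm:no L_{0}}), where one builds a negative subharmonic function on $\Sigma$ from the character one-form $\omega$.
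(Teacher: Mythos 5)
Your proposal is correct and follows essentially the same route as the paper's own proof: single-valuedness of $\mathrm{d}f$ from translation monodromy, the singularity classification inherited from Lemma~\ref{lem:L}, and the local normal forms $\mathfrak{f}=-\sqrt{-1}\,r\log z+s$ and $\mathfrak{f}=\frac{az^{m}+b}{cz^{m}+d}$ from the proof of that lemma to read off the pole and zero of $\omega$. The extra consistency check on the cusp monodromy is a nice touch but not needed.
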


\begin{proof}
Take a point $p$ on $\Sigma\backslash {\rm supp\, D}$ and choose a function element $\mathfrak{f}$ of $f$ near $p$. Since $f$ has monodromy in $L_{0}$, $\omega={\rm d}\mathfrak{f}$ does not depend on the choice of the function element $\mathfrak{f}$. Hence $\omega={\rm d}f$ is a holomorphic one-form on $\Sigma\backslash {\rm supp\, D}$. We postpone to (1-2) the proof that it extends to a meromorphic one-form on $\Sigma$.

\begin{enumerate}

\item By Lemma \ref{lem:L},  $\mathrm{d} s^2$\ has cusp singularities or cone ones with angles lying in $2\pi\,\mathbb{Z}_{>1}$. Let $p\in {\rm supp\, D}$ be a cusp singularity of $\mathrm{d} s^2$. By the proof of Lemma \ref{lem:L}, we can choose a function element $\mathfrak{f}$ near $p$ and a complex coordinate $z$ near $p$ such that $\mathfrak{f}=-\sqrt{-1}r\log z+s$, where $r>0$. So $\omega={\rm d}{\frak f}=\frac{-\sqrt{-1}r}{z}{\rm d}z$ has a simple pole at $p$ with residue $-\sqrt{-1}r$.

\item Suppose that\ $\mathrm{d} s^2$ has a cone singularity at $p$ with angle $2\pi m\in 2\pi\,\mathbb{Z}_{>1}$. We can choose a function element $\mathfrak{f}$ near $p$ and a complex coordinate $z$ near $p$ such that $\mathfrak{f}=\frac{az^{m}+b}{cz^{m}+d}$ with $ad-bc=1$, and $\omega={\rm d}{\frak f}=\frac{mz^{m-1}}{(cz^{m}+d)^{2}}{\rm d}z$. Since ${\frak f}$ takes values in ${\Bbb H}$, we cansee that  $d\neq0$, $p$ is a zero of $\omega$ with order $m-1$, and $\lim_{z\to p}\, {\frak f}(z)=\frac{b}{d}\in\mathbb{H}$.

\end{enumerate}
\end{proof}

\begin{theorem}
\label{thm:no L_{0}}
There exists no $L_0$-metric on a non-hyperbolic Riemann surface.

\end{theorem}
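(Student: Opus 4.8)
The plan is to produce a non-constant negative subharmonic function directly on $\Sigma$, which by Definition~\ref{defi:classification} makes $\Sigma$ hyperbolic, contradicting the hypothesis. The decisive feature of the $L_0$-case is that the monodromy consists only of the real translations $z\mapsto z+t$, so that analytic continuation of a function element of $f$ around any loop changes $f$ by a real constant. Consequently $\Im f$ is invariant under the monodromy and descends to a genuine single-valued harmonic function on $\Sigma\backslash{\rm supp\,D}$; and since $f$ takes values in $\mathbb{H}$ we have $\Im f>0$ there. I therefore set $u:=-\Im f$, a single-valued negative harmonic function on $\Sigma\backslash{\rm supp\,D}$. (This is exactly where the $L_0$-hypothesis simplifies matters: for a general $L$-metric the imaginary part is only multi-valued, which in Theorem~\ref{thm:no L on C^{*}} forced a passage to the maximal abelian cover, whereas here $u$ lives directly on $\Sigma$.)

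Next I would examine $u$ at the points of ${\rm supp\,D}$, which by Lemma~\ref{lem:L} are either cusps or cone singularities of integral angle. At a cone singularity of angle $2\pi m$, part~(2) of Lemma~\ref{lem:L_0} shows that $f$ extends holomorphically with limit $b/d\in\mathbb{H}$, so $u$ extends harmonically across such a point with a finite negative value. At a cusp, the normal form $\mathfrak{f}=-\sqrt{-1}\,r\log z+s$ with $r>0$ coming from the proof of Lemma~\ref{lem:L} gives $u=r\log|z|+\mathrm{const}$ in a punctured neighbourhood; this is subharmonic, bounded above, and tends to $-\infty$ at the cusp. Hence $u$ is harmonic on $\Sigma$ away from the discrete --- and therefore closed polar --- set of cusps, and is locally bounded above near each cusp.

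The removable-singularity statement for subharmonic functions (Lemma~\ref{thm:TR95}) then lets me extend $u$ to a subharmonic function on all of $\Sigma$. Because $\Im f>0$ off the cusps and $u=-\infty$ at the cusps, the extended $u$ is negative; and since $f$ is a non-constant holomorphic map, $\Im f$, and hence $u$, is non-constant. Thus $u$ is a non-constant negative subharmonic function on $\Sigma$, forcing $\Sigma$ to be hyperbolic. The only point demanding care is the passage across ${\rm supp\,D}$: one must check that $u$ extends as a genuinely subharmonic (not merely harmonic) function, for which the $\log|z|$ behaviour at cusps and the harmonic extension over the integral cone points are both needed --- but no global monodromy obstruction arises, precisely because $\Im f$ is single-valued.
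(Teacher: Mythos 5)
Your proof is correct and follows essentially the same route as the paper: since the $L_0$-monodromy consists of real translations, $u=-\Im f$ descends to a negative non-constant harmonic (hence subharmonic) function on $\Sigma\backslash{\rm supp\,D}$, and Lemma~\ref{thm:TR95} extends it across the discrete (polar) singular set, forcing $\Sigma$ to be hyperbolic. Your local analysis at cusps and integral cone points is harmless but unnecessary: Lemma~\ref{thm:TR95} only requires local boundedness above near the polar set, which is automatic from $u<0$, and this is exactly how the paper argues.
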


\begin{proof}
Let $\mathrm{d} s^2$ be an $L_0$-metric\ $\mathrm{d} s^2$ on a Riemann surface $\Sigma$, and
$f:\Sigma\backslash {\rm supp\, D}\to \mathbb{H}$ a developing map of $\mathrm{d} s^2$ with monodromy in $L_{0}$.  Then $-\Im\, f$ is a negative non-constant harmonic function on $\Sigma\backslash {\rm supp\, D}$ and then it is also subharmonic. Since an isolated point is polar, by Lemma \ref{thm:TR95}, $-\Im f$ extends to a negative non-constant subharmonic function on $\Sigma$. Hence, $\Sigma$ is a hyperbolic Riemann surface.
\end{proof}

\begin{prop}
\label{prop:LD}
 {\it An $L$-metric on ${\Bbb D}$ actually has monodromy in $L_0$.}
\end{prop}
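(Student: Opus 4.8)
The plan is to use the simple-connectivity of $\mathbb{D}$ to reduce the global monodromy to the local monodromies around the singular points, and then to read the latter off from the normal forms already established in Lemma \ref{lem:L}. First I would record the algebraic fact that the assignment $\begin{pmatrix} a & b \\ 0 & 1/a\end{pmatrix}\mapsto a$ is a surjective homomorphism $L\to\mathbb{R}_{>0}$ whose kernel is exactly $L_0$; equivalently, under the action of $L$ on $\mathbb{H}$ by $w\mapsto a^2 w+ab$, the subgroup $L_0$ consists precisely of the parabolic translations $w\mapsto w+t$. Hence, to prove the proposition it suffices to show that the image of $\mathfrak{M}_f:\pi_1(\mathbb{D}\setminus{\rm supp\, D})\to L$ lies in $L_0$. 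Since $\mathbb{D}$ is simply connected, $\mathbb{D}\setminus{\rm supp\, D}$ is a planar domain whose fundamental group is freely generated by a system of loops $\{\gamma_i\}$, where $\gamma_i$ encircles the single puncture $p_i$ and no other; every element of $\pi_1$ is a finite word in these generators. Because $L_0$ is a subgroup of $L$, it is enough to verify that $\mathfrak{M}_f(\gamma_i)\in L_0$ for each $i$, that is, that the local monodromy at each singular point is a translation.

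Next I would compute these local monodromies using Lemma \ref{lem:L}, by which each $p_i$ is either a cusp or a cone point of angle $2\pi m$ with $m\in\mathbb{Z}_{>1}$. At a cone point the proof of Lemma \ref{lem:L} provides a function element $\mathfrak{f}=\frac{az^{m}+b}{cz^{m}+d}$ in a coordinate $z$ centred at $p_i$; since $m$ is a positive integer, $z^{m}$ and hence $\mathfrak{f}$ are single-valued, so analytic continuation of $\mathfrak{f}$ once around $p_i$ returns $\mathfrak{f}$ unchanged and $\mathfrak{M}_f(\gamma_i)=\mathrm{id}\in L_0$. At a cusp the same lemma gives $\mathfrak{f}=-\sqrt{-1}\,r\log z+s$ with $r>0$ and $s\in\mathbb{R}$; replacing $\log z$ by $\log z+2\pi\sqrt{-1}$ sends $\mathfrak{f}$ to $\mathfrak{f}+2\pi r$, so the local monodromy is the translation $w\mapsto w+2\pi r$, i.e. the element $\begin{pmatrix} 1 & 2\pi r \\ 0 & 1\end{pmatrix}\in L_0$. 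In either case $\mathfrak{M}_f(\gamma_i)\in L_0$.

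Finally, since the generators $\gamma_i$ all map into the subgroup $L_0$, so does every word in them, whence $\mathfrak{M}_f\big(\pi_1(\mathbb{D}\setminus{\rm supp\, D})\big)\subseteq L_0$ and $\mathrm{d}s^2$ is an $L_0$-metric. The step I regard as the crux is the reduction of the global monodromy to the local ones: it rests entirely on the simple-connectivity of $\mathbb{D}$, which guarantees that $\pi_1(\mathbb{D}\setminus{\rm supp\, D})$ is generated by the puncture loops and therefore that $\mathfrak{M}_f$ is determined by its values on those loops---no monodromy can come from loops that do not encircle the singular points. The two local normal-form computations themselves are immediate consequences of Lemma \ref{lem:L} and present no further difficulty.
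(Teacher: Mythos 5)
Your proof is correct, but it takes a genuinely different route from the paper's. The paper argues through the derivative: by Lemma \ref{lem:L}, $f''/f'$ is a single-valued meromorphic function on all of $\mathbb{D}$ whose residues are integers ($m-1$ at a cone point of angle $2\pi m$, and $-1$ at a cusp), so every period of $(f''/f')\,\mathrm{d}z$ lies in $2\pi\sqrt{-1}\,\mathbb{Z}$ and $f'(z)=f'(z_0)\exp\bigl(\int_{z_0}^{z}(f''/f')\,\mathrm{d}z\bigr)$ is a single-valued meromorphic function on $\mathbb{D}$; since a monodromy element $w\mapsto a^{2}w+ab$ of $L$ multiplies $f'$ by $a^{2}$, single-valuedness forces $a^{2}=1$, i.e.\ the monodromy lies in $L_{0}$. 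You instead work directly with $\pi_{1}(\mathbb{D}\setminus{\rm supp\,D})$, reduce to the loops about the punctures, and read the local monodromies off the normal forms of Lemma \ref{lem:L}. Your argument is more topological and makes visible exactly what each type of singularity contributes (trivial monodromy at integer cone angles, a parabolic element at cusps); the paper's argument is shorter and yields the additional information that $f'$ descends to a genuine meromorphic function on $\mathbb{D}$.

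One step you should tighten. The normal form $\mathfrak{f}$ provided by Lemma \ref{lem:L} is \emph{some} function element of $f$ near $p_{i}$, not necessarily the element obtained by continuing the base element along your chosen path to $p_{i}$; consequently $\mathfrak{M}_f(\gamma_i)$ is a priori only a conjugate $g\tau g^{-1}$ of the local monodromy $\tau$ you computed, where $g$ lies in the monodromy group and hence in $L$. This costs nothing in the end: at a cone point $\tau=\mathrm{id}$, and every conjugate of the identity is the identity; at a cusp $\tau$ is a translation, and since $L_{0}$ is normal in $L$ (equivalently, the only elements of $L$ that are parabolic or trivial are exactly those of $L_{0}$), the conjugate $g\tau g^{-1}$ still lies in $L_{0}$. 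But the identification of $\mathfrak{M}_f(\gamma_i)$ with the specific translation $w\mapsto w+2\pi r$ is not literally justified as written, so this one-line normality remark should be added.
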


\begin{proof}

Let $\mathrm{d} s^2$ be an $L$-metric representing an ${\Bbb R}$-divisor D on the unit disc ${\Bbb D}$. Let $f:{\Bbb D}\backslash {\rm supp\, D}\to \mathbb{H}$ a developing map of $\mathrm{d} s^2$ with monodromy in $L$. By Lemma \ref{lem:L},  $f''/f'$ is a meromorphic function on ${\Bbb D}$ whose residues are all integers.
Hence, taking $z_0\in {\Bbb D}\backslash {\rm supp\, D}$, we find that
$
f'(z)=\exp\left(\int_{z_0}^{z}\frac{f''}{f'}{\rm d}z\right)+f'(z_0)
$
is a meromorphic function on ${\Bbb D}$, and then the
monodromy of $f$ lies in $L_{0}$.

\end{proof}


\begin{example}\label{exam:LLX}({\rm{\cite[Example 2.1]{LLX20}}})

Let $\sum_{j=1}^{\infty}a_{j}$ be a convergent series of positive numbers and $\{z_{j}\}_{j=1}^\infty\subset{\Bbb D}$ a closed discrete subset. Then $h(z):=\sum_{j=1}^{\infty}\frac{a_{j}}{z-z_{j}}$ is a meromorphic function on the unit disc ${\Bbb D}$ and there exists a real number $\lambda_{0}$ and a one-parameter family  of $L_{0}$-metrics representing the same $\mathbb{Z}$-divisor ${\rm D}=(h)$ on ${\Bbb D}$. Hence, these metrics have cusp singularities at $z_{j}$'s and a cone singularity of angle $2\pi\big(1+ {\rm ord}_{w}(h)\big)$ at a zero $w$ of $h$.
\end{example}

\section{Proof of Theorem \ref{thm:ZC} and some discussions}

\begin{proof}[Proof of Theorem \ref{thm:ZC}]
Let ${\rm d}s^2$ be a singular hyperbolic metric on a non-hyperbolic Riemann surface $\Sigma$.
The first statement follows from Theorems \ref{thm:H_{2}'no},  \ref{thm:noH_{3}} and \ref{thm:no L_{0}}.
The second one follows from the first one, Corollary \ref{Cor:2}, Theorem \ref{thm:no L on C^{*}},  and the classification of positive dimensional proper Lie subgroups of ${\rm PSL}(2,\,{\Bbb R})$ given in the introduction.

\end{proof}

We would like to make a discussion about Conjecture \ref{conj:ZP}, which claims that {\it a singular hyperbolic metric on a non-hyperbolic Riemann surface
has Zariski dense monodromy in ${\rm PSL}(2,\,{\Bbb R})$.} By Theorems \ref{thm:H_{2}'no},  \ref{thm:noH_{3}} and \ref{thm:no L_{0}}, the conjecture is reduced to proving
that {\it the monodromy of a singular  hyperbolic metric on a non-hyperbolic Riemann surface does not lie  in $L$}. To show its subtlety, let us consider a special parabolic Riemann surface -- the thrice punctured sphere
${\Bbb C}\backslash \{0,\,1\}$, whose maximal Abelian cover $\big({\Bbb C}\backslash \{0,\,1\}\big)^{\rm Ab}$ is a hyperbolic Riemann surface by
a theorem of McKean-Sullivan \cite{MS84}. Hence, the argument for the non-existence of $L$-metrics in the proof of Theorem \ref{thm:no L_{0}} becomes invalid on ${\Bbb C}\backslash \{0,\,1\}$.

Both Proposition \ref{prop:U(1)} and Example \ref{exam:LLX} show that the uniqueness of hyperbolic metric representing a given ${\Bbb R}$-divisor fails on ${\Bbb D}$. However, the hyperbolic metrics
there are not complete in general even after adding their cone singularities. We come up with the following.\\

\noindent {\bf Question 6.1.} Let ${\rm D}=\sum_{n=1}^\infty\, (\theta_j-1)p_j$ be an ${\Bbb R}$-divisor on ${\Bbb D}$ such that $0\leq \theta_j\not=1$ and $\{p_n\}$ is a discrete closed subset of ${\Bbb D}$. When does there exist a singular hyperbolic metric ${\rm d}s^2$ representing D such that it extends to a complete metric on ${\Bbb D}\backslash \{p_n:\, \theta_n=0\}$? If yes, would it be unique? Hulin-Troyanov \cite[Theorem 8.2]{HT92} answered these two questions affirmatively if ${\rm supp\, D}$ is a finite subset of ${\Bbb D}$.\\

\noindent\textbf{Acknowledgement:} The authors would like to thank Andriy Haydys, Rafe Mazzeo and Song Sun for many insightful discussions, and their hospitality during B.X.'s visit to
Freiburg University in Winter 2018 and Summer 2019, Stanford University and UC Berkeley in Spring 2019.
The authors would also like to thank Martin de Borbon, Alexandre Eremenko, Qiongling Li, Dmitri Panov,  Yi Qi, Mao Sheng and Ming Xu for helpful suggestions and comments. Part of the work was done while B.X. was visiting Institute of Mathematical Sciences at ShanghaiTech University in Spring 2019.


\bibliographystyle{amsplain}

\end{document}